\newcommand{\nequation}{\setcounter{equation}{0}}
\newcommand{\R}{\mathbb{R}}
\newcommand{\llangle}{\langle\!\langle}
\newcommand{\rrangle}{\rangle\!\rangle}
\newcommand{\Rot}{\text{\upshape Rot}}
\newcommand{\Diff}{\text{\upshape Diff}}
\newcommand{\Met}{\text{\upshape Met}}
\newcommand{\Metmu}{\Met_{\mu}}
\newcommand{\Vol}{\text{\upshape Dens}}
\newcommand{\Diffmu}{\Diff_{\mu}}
\newcommand{\Isog}{\text{\upshape Iso}_g}
\newcommand{\DiffM}{\Diff(M)}
\newcommand{\MetM}{\Met(M)}
\newcommand{\MetmuM}{\Metmu(M)}
\newcommand{\VolM}{\Vol(M)}
\newcommand{\DiffmuM}{\Diffmu(M)}
\newcommand{\Diffmuex}{\Diff_{\mu,\text{\upshape ex}}}
\newcommand{\IsogM}{\Isog(M)}
\newcommand{\Volumizer}{\Xi}
\newcommand{\pullback}{\mathcal{P}}
\newcommand{\pullbackg}{{\pullback_g}}
\newcommand{\pullbackmu}{\pullback_{\mu}}
\newcommand{\imbedding}{\text{\upshape emb}}
\newcommand{\projection}{\text{\upshape proj}}
\newcommand{\transpose}{\dagger}
\newcommand{\Lie}{\mathcal{L}}
\newcommand{\ad}{\text{\upshape ad}}
\newcommand{\Ad}{\text{\upshape Ad}}
\newcommand{\grad}{\nabla}
\newcommand{\Laplacian}{\Delta}
\newcommand{\Isom}{\text{\upshape Isom}}
\newcommand{\Jac}{\text{\upshape Jac}}
\newcommand{\dist}{\text{\upshape dist}}
\newcommand{\diam}{\text{\upshape diam}}
\DeclareMathOperator{\diver}{div}
\DeclareMathOperator{\Tr}{Tr}
\DeclareMathOperator{\Ric}{Ric}
\DeclareMathOperator{\curl}{curl}
\newtheorem{theorem}{Theorem}[section]
\newtheorem{proposition}[theorem]{Proposition}
\newtheorem{remark}[theorem]{Remark}
\newtheorem{example}[theorem]{Example}
\newtheorem{figuretext}[theorem]{Figure}
\newtheorem{corollary}[theorem]{Corollary}
\title[Geometry of diffeomorphism groups] 
{Geometry of diffeomorphism groups, complete integrability and optimal transport} 
\author{B. Khesin}
\address{B.K.: Department of Mathematics, University of Toronto, M5S 2E4, Canada} 
\email{khesin@math.toronto.edu}
\author{J. Lenells}
\address{J.L.: Department of Mathematics, Baylor University, One Bear Place \#97328, Waco, TX 76798, USA.}
\email{Jonatan\_Lenells@baylor.edu}
\author{G. Misio\l ek}
\address{G.M.: Department of Mathematics, University of Notre Dame, IN 46556, USA}
\email{gmisiole@nd.edu}
\author{S. C. Preston}
\address{S.C.P.: Department of Mathematics, University of Colorado, CO 80309, USA}
\email{Stephen.Preston@colorado.edu}
\begin{document}
\maketitle

\today

\begin{abstract} 
\noindent 
We study the geometry of the space of densities $\VolM$, which is
the quotient space $\Diff(M)/\Diff_\mu(M)$ of the diffeomorphism group 
of a compact manifold $M$ by the subgroup of volume-preserving diffemorphisms, 
endowed with a right-invariant homogeneous Sobolev $\dot{H}^1$-metric. 
We construct an explicit isometry from this space to (a subset of) an infinite-dimensional sphere 
and show that the associated Euler-Arnold equation is a completely integrable system 
in any space dimension. We also prove that its smooth solutions break down in finite time. 

Furthermore, we show that the $\dot{H}^1$-metric induces the Fisher-Rao (information) metric 
on the space of probability distributions, and thus its Riemannian distance 
is the spherical version of Hellinger distance. 
We compare it to the Wasserstein distance in optimal transport which is induced by 
an $L^2$-metric on $\Diff(M)$. 
The $\dot{H}^1$ geometry we introduce in this paper can be seen as an infinite-dimensional version 
of the geometric theory of statistical manifolds. 
\end{abstract}

\noindent
{\small{\sc AMS Subject Classification (2000)}: 53C21, 58D05, 58D17.}

\noindent
{\small{\sc Keywords}: diffeomorphism groups, Riemannian metrics, geodesics, curvature, 
Euler-Arnold equations, optimal transport, Hellinger distance, integrable systems.}

\tableofcontents

\section{Introduction} \nequation

The geometric approach to hydrodynamics pioneered by V. Arnold \cite{A} 
is based on the observation 
that the particles of a fluid moving in a compact $n$-dimensional Riemannian manifold $M$ trace out 
a geodesic curve 
in the infinite-dimensional group $\DiffmuM$ of volume-preserving diffeomorphisms 
(volumorphisms) of $M$. 
Arnold's framework is very general. 
It includes a variety of nonlinear partial differential equations of mathematical physics---in abstract form often referred to as 
\emph{Euler-Arnold} or \emph{Euler-Poincar\'e} equations. 

With a few exceptions, papers on infinite-dimensional Riemannian geometry, 
including diffeomorphism groups, 
tended to focus exclusively on either strong metrics 
or weak metrics of $L^2$-type.\footnote{Roughly speaking, 
a Riemannian metric on a Hilbert manifold is \textit{strong} if it generates 
a complete norm topology on each tangent space, i.e., if the Riemannian norm
is equivalent to the Hilbert norm; otherwise it is \textit{weak}.}
The interest in the latter has to do with the fact that 
such metrics often represent kinetic energies as in the case of hydrodynamics, 
see e.g., \cite{ak, Ebin-Marsden, shn}, 
or arise naturally in probability and optimal transport problems 
with quadratic cost functions (such as the Wasserstein or Kantorovich-Rubinstein distance);
see e.g., \cite{bb, otto, st, v}. 

On the other hand, in recent years there have appeared a number of 
interesting nonlinear evolution equations 
described as geodesic equations on diffeomorphism groups 
with respect to weak Riemannian metrics of Sobolev $H^1$-type, 
see e.g., \cite{ak, HMR, K-M} and their references for examples such as 
the Camassa-Holm, the Hunter-Saxton or the Euler-$\alpha$-equations.

In this paper we focus on the $H^1$ metrics both from a differential-geometric 
and a dynamical systems perspective. 
We will show that they arise naturally on (generic) orbits of diffeomorphism groups 
in the space of all Riemannian metrics on $M$. 
The main results of this paper concern 
the geometry of a subclass of such metrics, namely, degenerate right-invariant $\dot{H}^1$ Riemannian metrics 
on the full diffeomorphism group $\DiffM$ 
and the properties of solutions of the associated geodesic equations. 
The normalized metric is given at the identity diffeomorphism by 
\begin{equation} \label{diverdiv}
\llangle u,v\rrangle = \frac{1}{4} \int_M \diver{u}\cdot \diver{v} \, d\mu.
\end{equation} 
It descends to a non-degenerate Riemannian metric on 
the homogeneous space of right cosets (densities) $\VolM=\Diff(M)/\Diffmu(M)$. 
Furthermore, it turns out that the corresponding geometry is \textit{spherical} 
for any compact manifold $M$. 
More precisely, we prove that equipped with \eqref{diverdiv} 
the space $\VolM$ is isometric to (a subset of) an infinite-dimensional sphere 
in a Hilbert space. 
In fact, the metric we define on $\VolM$ can be viewed as an analog of 
the metric introduced by Otto \cite{otto} in the theory of mass transport; 
furthermore the Riemannian distance of \eqref{diverdiv}, which will be shown to coincide with the (spherical) Hellinger distance well-known in probability and mathematical statistics, 
can be viewed as an analogue of the $L^2$-Wasserstein distance. 
Remarkably, it also turns out that our metric induces 
the so-called Fisher-Rao (information) metric 
and related Chentsov-Amari $\alpha$-connections 
which have diverse applications in asymptotic statistics, information theory 
and quantum mechanics, see e.g., \cite{AN, chentsov}. 

We derive the Euler-Arnold equations associated to the general right-invariant $H^1$ metrics 
which include as special cases the $n$-dimensional (inviscid) Burgers equation, the Camassa-Holm equation,  
as well as variants of the Euler-$\alpha$ equation. 
In the particular case of the homogeneous $\dot{H}^1$-metric \eqref{diverdiv} 
the Euler-Arnold equation has the form 
\begin{align} \label{multiHS}
\rho_t + u \cdot \nabla \rho 
+ 
\tfrac{1}{2} \rho^2 
=
-\frac{\int_M \rho^2 \, d\mu}{2\mu(M)}  \,,
\end{align}
where $u = u(t,x)$ is a time-dependent vector field on $M$ with $\diver{u} = \rho$. 
This equation is a natural generalization of 
the completely integrable one-dimensional Hunter-Saxton equation \cite{H-S} 
which is also known to yield geodesics 
on the homogeneous space $\Diff(S^1)/\Rot(S^1)$ 
(the quotient of the diffeomorphism group of the circle by the subgroup of rotations), 
see \cite{K-M}. 

We  prove that the solutions of \eqref{multiHS} describe the great circles 
on a sphere in a Hilbert space and in particular the equation is 
a \textit{completely integrable} PDE for any number $n$ of space variables. 
The corresponding complete family of conserved integrals will be constructed 
in terms of angular momenta. 
We point out that with a few exceptions (such as 
the two-dimensional Kadomtsev-Petviashvili, Ishimori, and Davey-Stewartson equations) 
all known integrable evolution equations are limited to one space dimension.\footnote{As noted 
in \cite{F2006}, derivation and solution of integrable nonlinear evolution partial differential equations 
in three spatial dimensions has been the ``holy grail'' in the field of integrability since the late 1970s.} 
We hope that the geometric properties of (\ref{multiHS}) will make it 
an interesting novel example in this area. 

Furthermore, we show that the maximum existence time for smooth solutions 
of \eqref{multiHS} is necessarily finite for any initial conditions,
with the $L^\infty$ norm of the solution growing without bound as $t$ approaches the critical time. 
On the other hand, the geometry of the problem points to a method of constructing 
global weak solutions of \eqref{multiHS}. 
We will 
describe a strategy 
showing how this can be done using a technique of J. Moser. 

It is also of interest to consider the general form of 
the right-invariant ($a$-$b$-$c$) Sobolev $H^1$ metric on $\DiffM$ 
given at the identity by 
\begin{equation}\label{abcmetric}
\llangle u, v \rrangle 
:= 
a\int_M \langle u, v\rangle \, d\mu  
+  
b\int_M \langle \delta u^{\flat}, \delta v^{\flat} \rangle \, d\mu
+ 
c\int_M \langle du^\flat, dv^\flat\rangle \, d\mu,
\end{equation}
where $u, v \in T_e\Diff(M)$ are vector fields on $M$, 
$\mu$ is the Riemannian volume form,\footnote{The volume form $\mu$ 
is denoted by $d\mu$ whenever it appears under the integral sign.}  
$\flat$ is the isomorphism $TM \to T^*M$ 
defined by the metric on $M$
and $a, b$ and $c$ are non-negative real numbers.\footnote{We can allow $a<0$ 
as long as \eqref{abcmetric} is non-negative---e.g., when the first cohomology of $M$ is zero.} 
We derive the Euler-Arnold equation for the metric \eqref{abcmetric} below; while a detailed study of its geometry with the attendant curvature calculations 
will appear in a separate publication \cite{KLMP}. 

Finally, a comment on the functional analytic framework we chose for the paper. 
While our motivations and objectives are directly related to questions in analysis and PDE, 
in order to better present our geometric ideas we will---with few exceptions---work primarily with objects 
(function spaces, diffeomorphism groups, etc.) which consist of smooth functions. 
However, we emphasize that when these objects are equipped with a suitably strong topology 
(for example, any Sobolev $H^s$ topology with sufficiently large $s>n/2 +1$ 
will do for our purposes) 
then our constructions are rigorously justified in a routine manner. We will not belabour this point and instead refer the reader 
to the papers \cite{Ebin-Marsden} and \cite{MP} where such questions are considered 
in greater detail. 

\smallskip
The structure of the paper is as follows.
In Section \ref{background} we review the geometric background 
on Euler-Arnold equations and describe the space of densities 
used in optimal transport, as well as reductions of the $L^2$- and $H^1$-type metrics 
on $\DiffM$, its subgroup $\DiffmuM$, and their quotient. 

In Section \ref{aczero} we introduce the homogeneous $\dot H^1$-metric 
on the space of densities $\VolM = \Diff(M)/\Diffmu(M)$ 
and study its geometry. 
Generalizing the results of \cite{L1, L2} for the case of the circle 
we show that for any $n$-dimensional manifold 
the space $\VolM$ is isometric to 
a subset of the sphere in $L^2(M, d\mu)$ with the induced metric. 
The corresponding Riemannian distance is shown to be 
the spherical Hellinger distance. 

In Section \ref{statistics} we describe the relation of the $\dot{H}^1$-metric 
to geometric statistics and probability. 
In particular, we show that on the space $\VolM$ 
it plays the role of the classical Fisher-Rao metric. 
In the case $M=S^1$ we then use it to introduce the analogues of 
dual affine connections generalizing the constructions of Chentsov and Amari. 

In Section \ref{geodesics} we study local properties of solutions to the 
corresponding Euler-Arnold equation 
and demonstrate its complete integrability, as a geodesic flow on the sphere. 
Since for $M=S^1$ our equation reduces to the Hunter-Saxton equation 
we thus obtain an integrable generalization of the latter to any space dimension.

In Section \ref{global} we turn to global properties of solutions.
We derive an explicit formula for the Jacobian, 
prove that solutions necessarily break down in finite time 
and present an approach to construct global weak solutions. 

In Section \ref{abcmetricsection} we derive the Euler-Arnold equation
for the general $a$-$b$-$c$ metric \eqref{abcmetric} and show that 
several well-known PDE of mathematical physics can be obtained as special cases. 
We also discuss the situations in which some of the coefficients 
$a$, $b$, or $c$ are zero.

Finally, in Section \ref{sec:Met-orbits} we present a geometric construction 
which yields right-invariant metrics of the type \eqref{abcmetric} as induced metrics 
on the orbits of the diffeomorphism group from the canonical Riemannian $L^2$ structures 
on the spaces of Riemannian metrics and volume forms on 
the underlying manifold $M$. 

\medskip

{\bf Acknowledgements.}
We thank Aleksei Bolsinov, Nicola Gigli and Emanuel Milman 
for helpful suggestions.


\section{Geometric background}
\label{background}
\nequation 

\subsection{The  Euler-Arnold equations} 
\label{subsec:EA} 

In this section we describe the general setup which is convenient to study geodesics 
on Lie groups and homogeneous spaces equipped with right-invariant metrics.

Let $G$ be a (possibly infinite-dimensional) Lie group 
with a group operation 
denoted by 
$(\eta, \xi)\mapsto \eta\circ \xi$. 
In our main examples group elements will be diffeomorphisms 
and the operation will be their composition. 
We shall use $T_eG$ to denote the Lie algebra of $G$, 
where $e$ is the identity element.
For any $\eta\in G$ the group adjoint is the map 
$\Ad_{\eta}\colon T_e G \to T_eG$ 
given by the differential 
$$
u \to \Ad_{\eta} u = (L_{\eta}\circ R_{\eta^{-1}})_{*e} u 
$$
where $L_{\eta}$ stands for a left-translation $\xi\mapsto \eta\,\circ\,\xi$ on the group 
while $R_\eta$ is the corresponding right-translation $\xi \mapsto \xi\,\circ\,\eta$. 
The algebra adjoint $\ad_u\colon T_eG\to T_eG$ is given by
$$
\ad_u = \frac{d}{dt}\Big|_{t=0} \Ad_{\eta(t)},
$$ 
where $\eta(t)$ is any curve in $G$ with 
$\eta(0)=e$ and $\dot{\eta}(0)=u$. 
If the group operation is composition of diffeomorphisms 
then in terms of the standard Lie bracket of vector fields 
we have $\ad_u v = -[u,v]$.

We equip $G$  with a right-invariant (possibly weak) Riemannian metric
which is determined by an inner product $\llangle\cdot , \cdot \rrangle$ 
on the tangent space at the identity 
$$
\llangle R_{\eta *e}u, R_{\eta *e}v\rrangle_{\eta} = \llangle u, v\rrangle\,,
$$
where $\eta\in G$ and $u,v\in T_eG$. 
The Euler-Arnold equation on the Lie algebra 
for the corresponding geodesic flow has the form 
\begin{equation}\label{utBuu}
u_t = -B(u,u) = -\ad_u^{*}u,
\end{equation}
with $u(0)=u_0$ and where the bilinear operator $B$ on $T_e G$ is defined by 
\begin{equation} \label{opB} 
\llangle B(u,v), w\rrangle 
= 
\llangle u, \ad_vw\rrangle. 
\end{equation} 
Equation \eqref{utBuu} describes the evolution in the Lie algebra of the vector $u(t)$ 
obtained by right-translating the velocity along the geodesic $\eta$ in $G$ 
starting at the identity with initial velocity $u(0)=u_0$.
The geodesic itself can be obtained by solving the Cauchy problem for the flow equation 
$$
\frac{d\eta}{dt} = R_{\eta \ast e} u, 
\quad 
\eta(0)= e. 
$$


\begin{remark} \upshape
Rewriting Equation \eqref{utBuu} on the dual space  $T_e^*G$ in the form 
$$
\frac{d}{dt}( \Ad_\eta^\ast u ) =0, 
\quad
\llangle \Ad_\eta^\ast u, v \rrangle = \llangle u, \Ad_\eta v \rrangle 
$$
gives a conservation law
$
\Ad_{\eta(t)}^* u(t) = u_0 
$
expressing the fact that $u(t)$ is confined to one and the same coadjoint orbit 
during the evolution. 
\end{remark}

\begin{remark} \label{rem:descent} \upshape
Let $H$ be a closed subgroup of $G$. 
A right-invariant metric on $G$ descends to 
an invariant (under the right action of $G$) metric 
on the homogeneous space $G/H$ if and only if  
the projection of the metric to $T_e^\perp H\subset  T_eG$, 
the orthogonal complement to $H$ in the group $G$,
is bi-invariant with respect to the subgroup $H$ action.
%
(If the metric in $G$ is degenerate along the subgroup $H$ then this
condition reduces to the metric bi-invariance  with respect to the $H$-action,
see e.g., \cite{K-M}.  We shall consider the general case in Section \ref{descends} below.)
The corresponding Euler-Arnold equation is then defined similarly 
as long as the metric is non-degenerate on the quotient $G/H$.
\end{remark}


\subsection{Examples: equations of fluid mechanics}
\label{subsec:Ex} 
We list several equations of mathematical physics 
that arise as geodesic flows on diffeomorphism groups. 

\subsubsection{}
Let $G=\Diffmu(M)$ be the group of volume-preserving 
diffeomorphisms (volumorphisms) of a closed Riemannian manifold $M$. 
Consider the right-invariant metric on $\DiffmuM$ generated by the $L^2$ inner product
\begin{equation} \label{rinvL2} 
\llangle u,v\rrangle_{L^2} 
= 
\int_{M} \langle u,v\rangle \, d\mu.
\end{equation} 
In this case the Euler-Arnold equation \eqref{utBuu} 
is the Euler equation of an ideal incompressible fluid in $M$ 
\begin{equation} \label{eq:Eulereq} 
u_t + \nabla_u u = -\grad p, \quad \diver{u}=0,
\end{equation} 
where $u$ is the velocity field and $p$ is the pressure function, see \cite{A}.
In the vorticity formulation the 3D Euler equation becomes 
$$
\omega_t + [u,\omega] = 0\,,\qquad \text{where} \quad 
\omega = \curl{u}\, .
$$

\subsubsection{} 
Consider the right-invariant metric on $\DiffmuM$ given by the $H^1$ inner product 
$$ 
\llangle u,v\rrangle_{H^1}  
= 
\int_M \big(\langle u,v\rangle 
+ 
\alpha^2 \langle du^{\flat}, dv^{\flat} \rangle \big) \, d\mu. 
$$
The corresponding Euler-Arnold equation is sometimes called the Euler--$\alpha$ 
(or Lagrangian-averaged) equation and in 3D has the form 
\begin{equation}\label{h1alpha}
\omega_t + [u,\omega] 
= 0 \,,
\qquad \text{where} \quad 
\omega = \curl{u} - \alpha^2 \curl\Delta{u};
\end{equation}
see e.g. \cite{HMR}. 

\subsubsection{} 
Another source of examples is related to various
right-invariant Sobolev metrics on the group $G=\Diff(S^1)$ of all circle 
diffeomorphisms,  
as well as its one-dimensional central extension, the Virasoro group. 
Of particular interest are those metrics whose Euler-Arnold equations 
turn out to be completely integrable. 

On $\Diff(S^1)$ with the metric defined by the $L^2$ product 
the Euler-Arnold equation \eqref{utBuu} becomes 
the (rescaled) inviscid Burgers equation 
\begin{equation}  \label{invB1D}
u_t + 3 u u_x = 0 \,,
\end{equation} 
while the $H^1$ product yields the Camassa-Holm equation 
\begin{equation} \label{CH1D} 
  u_t - u_{txx} + 3uu_x - 2u_x u_{xx} - u u_{xxx} = 0\,. 
\end{equation} 
Similarly, the homogeneous part of the $H^1$ product gives rise to 
the Hunter-Saxton equation 
\begin{equation}\label{HS}
u_{txx} + 2 u_xu_{xx}  +uu_{xxx} = 0\,.
\end{equation}
More precisely, in the latter case one considers 
the quotient $\Diff(S^1)/\Rot(S^1)$ 
whose tangent space at the identity coset $[e]$, 
i.e., the coset corresponding to the identity diffeomorphism, 
can be identified with periodic functions of zero mean. 
The right-invariant metric on $\Diff(S^1)/\Rot(S^1)$ is defined by 
the $\dot H^1$ inner product on such functions 
$$
\llangle u,v\rrangle_{\dot{H}^1} = \int_{S^1} u_x v_x \,dx 
$$ 
and the corresponding Euler-Arnold equation is given by the
Hunter-Saxton equation \eqref{HS}. 

We also mention that if $G$ is the Virasoro group equipped with 
the right-invariant $L^2$ metric then the Euler-Arnold equation 
is the periodic Korteweg-de Vries equation 
$$
u_t + 3 u u_{x} + c u_{xxx} = 0\,,
$$
which is a shallow water approximation and the classical example of 
an infinite-dimensional integrable system. 
We refer the reader to \cite{K-M} for more details on these constructions.

\begin {remark}
{\rm
The Hunter-Saxton equation will be of particular interest to us in this paper. 
In \cite{L1, L2} Lenells constructed an explicit isometry between $\Diff(S^1)/\Rot(S^1)$ 
and a subset of the unit sphere in $L^2(S^1)$ 
and described the corresponding solutions of Equation (\ref{HS}) 
in terms of the geodesic flow on the sphere. 
Although the solutions exist classically only for a finite time they can be extended beyond 
the blowup time as weak solutions, see \cite{L3}. 
In the sections below, we shall show that this phenomenon can be established for flows 
on manifolds of arbitrary dimension. 
}
\end{remark}


\subsection{The $L^2$-optimal transport and Otto's calculus} 
\label{sec:OT} 

Given a volume form $\mu$ on $M$ there is a natural fibration of the diffeomorphism group $\DiffM$ 
over the space of volume forms of fixed total volume $\mu(M)=1$. 
More precisely, the projection onto the quotient space $\DiffM/\DiffmuM$ 
defines a smooth ILH principal bundle\footnote{In the Sobolev category 
$\Diff^s(M) \to \Diff^s(M)/\Diff^s_\mu(M)$ 
is a $C^0$ principal bundle for any sufficiently large $s>n/2 +1$, see \cite{Ebin-Marsden}.} 
with fibre $\DiffmuM$ and whose base is diffeomorphic to 
the space $\Vol(M)$ of normalized smooth positive densities (or, volume forms) 
$$
\Vol(M)= \left\{ \nu \in \Omega^n(M) :~ \nu >0, \, \int_M d\nu  = 1 \right\}\,,
$$ 
see Moser \cite{moser}. 
Alternatively, let $\rho = d\nu/d\mu$ denote the Radon-Nikodym derivative 
of $\nu$ with respect to the reference volume form $\mu$. 
Then the base (as the space of constant-volume densities) can be regarded as  
a convex subset of the space of smooth functions on $M$ 
$$
\mathcal{M} = \left\{ \rho\in C^{\infty}(M, \mathbb{R}_{>0}) : \int_M \rho \, d\mu = 1 \right\}\,.
$$ 
In this case the projection map $\pi\colon \Diff(M)\to \mathcal{M}$ can be written explicitly as 
$\pi(\eta)=\Jac_\mu(\eta^{-1})$ 
where $\Jac_\mu(\eta)$ denotes the Jacobian of $\eta$ computed with respect to $\mu$, 
that is, $\eta^*\mu = \Jac_\mu(\eta)\mu$. 
%

The fact that $(\eta\circ \xi)^*\mu = \xi^*\eta^*\mu$ implies that 
\begin{equation}\label{jacobiancomposition}
\Jac_\mu(\eta\circ \xi)=
(\Jac_\mu(\eta)\circ \xi)\cdot \Jac_\mu(\xi).
\end{equation}
As a consequence, the projection $\pi$ satisfies $\pi(\eta\circ\xi) = \pi(\eta)$ whenever $\xi\in \Diffmu(M)$, i.e., 
whenever $\Jac_{\mu}(\xi)=1$. Thus $\pi$ is constant on the left cosets and descends to an isomorphism between
the quotient space of left cosets to the space of densities.

\begin{figure}
\begin{center}
 \includegraphics[width=.8\textwidth]{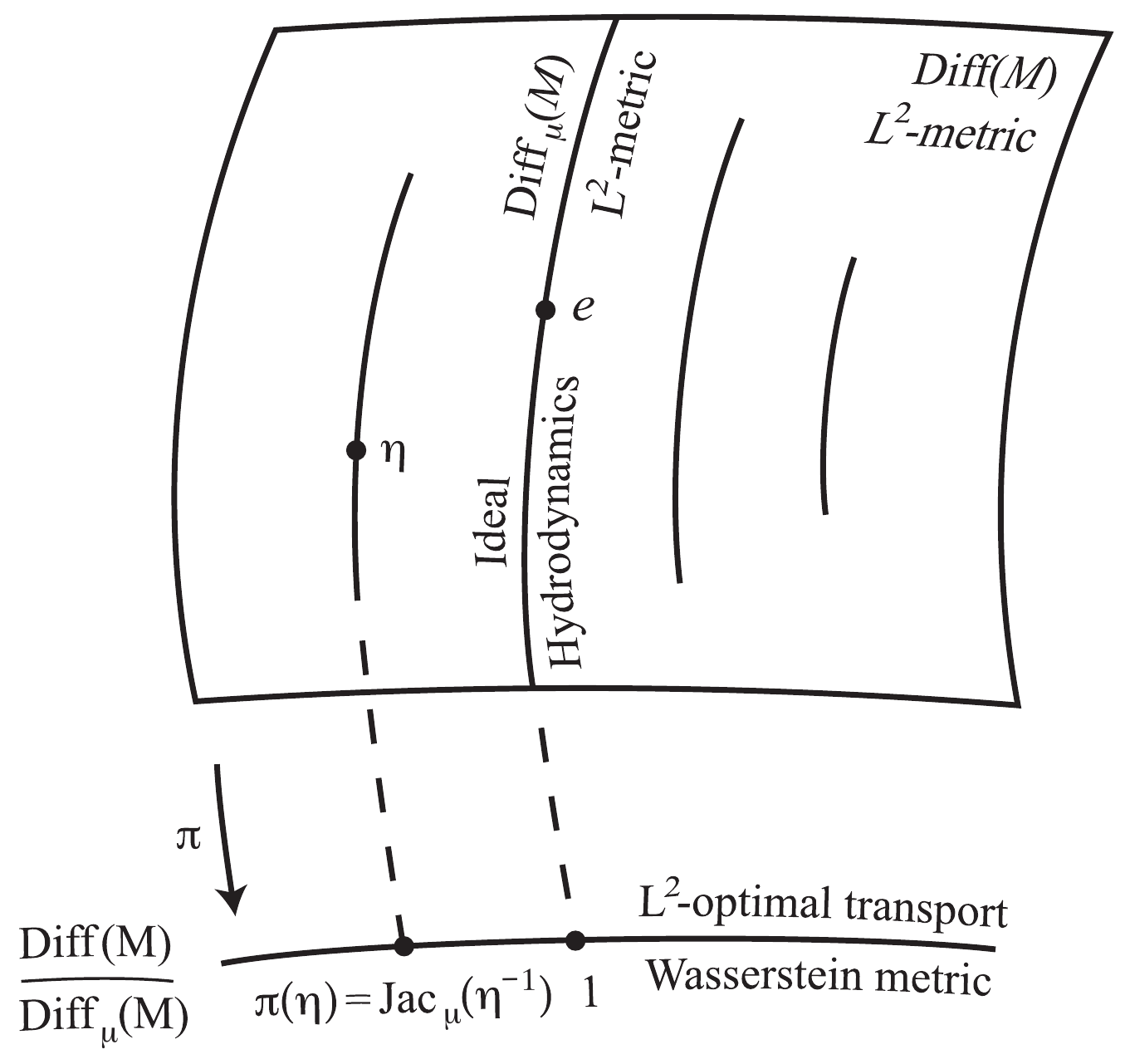}
      \begin{figuretext}\label{L2fibration.pdf}
        The fibration of $\DiffM$ with fiber $\DiffmuM$ determined by the reference density $\mu$ together with the $L^2$-metrics. The density $\mu$ itself corresponds to the constant function equal to 1.
                 \end{figuretext}
     \end{center}
\end{figure}

The group $\DiffM$ carries a natural $L^2$-metric 
\begin{equation}\label{dmetric}
\llangle u\circ\eta,v\circ\eta\rrangle_{L^2} 
= 
\int_M \langle u\circ\eta, v\circ\eta \rangle\, d\mu 
= 
\int_M \langle u, v \rangle \Jac_\mu(\eta^{-1}) \, d\mu
\end{equation}
where  $u,v \in T_e\DiffM$ and $\eta \in \DiffM$. This metric is neither left- nor right-invariant, 
although it becomes right-invariant when restricted to the subgroup $\DiffmuM$ of 
volumorphisms and becomes left-invariant only on the subgroup of isometries. 
Its significance comes from the fact that a curve $t \to \eta(t)$ in $\Diff(M)$ is a geodesic 
if and only if 
$t \to \eta(t)(p)$ is a geodesic in $M$ for each $p\in M$.\footnote{Note that if $M$ is {\it flat} 
then \eqref{dmetric} is a {\it flat} metric on $\Diff(M)$: 
a neighborhood of the identity is isometric to an open set in the pre-Hilbert space  
of vector-valued $H^s$ functions on $M$ equipped with the $L^2$ inner product.}  
Following Otto \cite{otto} one can then introduce a metric on the base 
for which the projection $\pi$ is a Riemannian submersion: 
vertical vectors at $T_{\eta}\Diff(M)$  are those fields $u\circ\eta$ with $\diver{(\rho u)}=0$, 
and horizontal fields are of the form $\grad f\circ\eta$ for some $f\colon M\to\mathbb{R}$, 
since the differential of the projection is $\pi_*(v\circ\eta) = -\diver{(\rho v)}$ 
where $\rho = \pi(\eta)$.\footnote{Otto's construction actually comes from $\Jac_\mu(\eta^{-1})$, 
which is important since it is left-invariant, not right-invariant.}

The induced metric on the base is then
\begin{equation}\label{gradmetric}
\llangle \alpha, \beta\rrangle_{\rho} = \int_M \rho \langle \grad f, \grad g\rangle \, d\mu,
\end{equation}
where $f$ and $g$ solve $\diver{(\rho \grad f)} = -\alpha$ and $\diver{(\rho \grad g)} = -\beta$ 
with mean-zero functions $\alpha$ and $\beta$ considered as elements of 
the tangent space at $\rho$. 

The geodesic equation of the metric \eqref{dmetric} on $\DiffM$ is 
$$
\dfrac{D}{dt} \dfrac{d\eta}{dt}=0
$$  
i.e., individual particles of the geodesic flow of diffeomorphisms $\eta(t)$ 
move along the geodesics in $M$ until they cross 
(and a smooth solution ceases to exist). 
In Eulerian coordinates, using 
$\frac{d}{dt}\eta(t,x) = u\big(t, \eta(t,x)\big)$, 
the geodesic equation can be rewritten 
as the pressureless Euler (or, inviscid Burgers) equation
\begin{equation}\label{pressurelesseuler}
\partial_t u + \nabla_u u=0
\end{equation}
and the induced geodesic equation on the quotient space reads 
\begin{equation}\label{gradientequation}
\partial_t \phi + \tfrac{1}{2} \lvert \grad \phi\rvert^2 = 0 
\end{equation} 
where $u = \grad \phi$ and $\phi : M \to \mathbb{R}$ 
is a smooth function.\footnote{The projection of the pressureless 
Euler equation onto the subgroup 
of volumorphisms $\DiffmuM$ is the Euler equation of incompressible fluids, 
so that in a sense fluid mechanics is ``orthogonal'' to optimal transport, as remarked in \cite{otto}.}  
Note that, as with any Riemannian submersion, if the tangent vector of a geodesic is initially horizontal, 
then it will remain so at later times. 
In our situation this corresponds to the fact that vorticity is conserved so that 
if a solution of \eqref{pressurelesseuler} is initially a gradient, it will always be a gradient 
(and $u=\grad \phi$ implies that $\phi$ satisfies \eqref{gradientequation}). 

The associated Riemannian distance in $\DiffM/\DiffmuM$ 
between two measures $\nu$ and $\lambda$ has an elegant interpretation 
as the $L^2$-cost of transporting one density to the other 
\begin{equation} \label{eq:wasser} 
\dist_W^2(\nu,\lambda)=\inf_\eta\int_M \dist_M^2(x,\eta(x))\, d\mu 
\end{equation} 
with the infimum taken over all diffeomorphisms $\eta$ such that $\eta^*\lambda = \nu$ 
and where $\dist_M$ denotes the Riemannian distance on $M$;
see \cite{bb} or \cite{otto}. 
The function $\dist_W$ is the $L^2$-Wasserstein (or Kantorovich-Rubinstein) distance 
between $\mu$ and $\nu$ and is of fundamental importance in optimal transport theory.


\subsection{Homogeneous metrics on $\DiffM/\DiffmuM$} 
\label{descends}

In this section we formulate a condition under which metrics on $\DiffM$ 
descend to the homogeneous space of densities 
and describe several examples. 
The non-invariant $L^2$ metric used in optimal transport, 
as well as our main example, the right-invariant $\dot{H}^1$ metric \eqref{diverdiv}, 
both descend to the quotient. 
But other natural candidates, such as the right-invariant $L^2$ metric on $\DiffM$ 
or the full $H^1$ metric, do not. 

We start with a general observation about right-invariant metrics. 
Let $H$ be a closed subgroup of a group $G$.

\begin{proposition}\label{KhesinMisiolekprop}
A right-invariant metric $\llangle \cdot, \cdot \rrangle$ on $G$ descends to 
a right-invariant metric on the homogeneous space $G/H$ 
if and only if  
the inner product restricted to $T_e^\perp H$ (the orthogonal complement of $T_eH$) 
is bi-invariant with respect to the action by the subgroup $H$, i.e.,  
for any $u,v\in T_e^\perp H\subset T_eG$ and any $w\in T_eH$ one has
\begin{equation} \label{descent} 
\llangle v, \ad_w u\rrangle + \llangle u, \ad_w v\rrangle = 0 \,.
\end{equation} 
\end{proposition}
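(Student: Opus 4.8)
The plan is to recognize that a right-invariant metric on the homogeneous space $G/H$ is nothing but an inner product on the tangent space at the base coset which is invariant under the linear isotropy action of $H$, and then to show that this invariance, read infinitesimally, is precisely \eqref{descent}. First I would set up the Riemannian-submersion picture for the projection $\pi\colon G\to G/H$. Right-translating to the identity, the vertical space (tangent to the fibre) becomes $T_eH$, and, since the metric is right-invariant, its orthogonal complement $T_e^\perp H$ serves as the horizontal space; thus $T_{[e]}(G/H)\cong T_e^\perp H$. The candidate quotient metric assigns to tangent vectors at a coset the inner product of their horizontal lifts, so right-invariance reduces everything to the base point. Writing $P\colon T_eG\to T_e^\perp H$ for the orthogonal projection, a short computation transporting the fibre-preserving maps $\xi\mapsto h\xi$ (which cover the identity on $G/H$) back to the identity shows that the isotropy action of $h\in H$ on $T_e^\perp H$ is the projected adjoint action $u\mapsto P\,\Ad_h u$. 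The candidate metric is therefore well defined and right-$G$-invariant exactly when
\begin{equation*}
\llangle P\,\Ad_h u,\, P\,\Ad_h v\rrangle = \llangle u,v\rrangle \qquad\text{for all } u,v\in T_e^\perp H,\ h\in H. \tag{$\star$}
\end{equation*}

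For the forward implication I would simply differentiate $(\star)$. Taking a curve $h(t)\subset H$ with $h(0)=e$ and $\dot h(0)=w\in T_eH$, and using $\frac{d}{dt}\big|_{t=0}\Ad_{h(t)}u=\ad_w u$ together with $Pu=u$, $Pv=v$ for horizontal $u,v$ and the self-adjointness of $P$, the derivative of the left-hand side at $t=0$ equals $\llangle \ad_w u, v\rrangle+\llangle u,\ad_w v\rrangle$, while the right-hand side is constant. Hence $(\star)$ forces exactly the identity \eqref{descent}.

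The converse---integrating the infinitesimal identity \eqref{descent} back up to the finite invariance $(\star)$---is where the real content lies, and it is the step I expect to require the most care. The point is that $A_h:=P\,\Ad_h|_{T_e^\perp H}$ is not merely a family of operators but an honest representation of $H$ on $T_e^\perp H$: because $T_eH$ is $\Ad_H$-invariant (as $H$ is a subgroup), for $u\in T_e^\perp H$ the vector $(I-P)\Ad_{h_2}u$ lies in $T_eH$ and is carried by $\Ad_{h_1}$ back into $T_eH$, hence annihilated by $P$, so that $A_{h_1}A_{h_2}=P\,\Ad_{h_1}\Ad_{h_2}=A_{h_1 h_2}$. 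Granting this homomorphism property, \eqref{descent} says precisely that the differential of $A$ at the identity takes values in the skew-symmetric endomorphisms of $(T_e^\perp H,\llangle\cdot,\cdot\rrangle)$; for connected $H$ this integrates to $A_h\in O(T_e^\perp H)$ for every $h$, which is $(\star)$ (for disconnected $H$ one argues on the identity component). Finally, $(\star)$ guarantees that the inner product of horizontal lifts is independent of the chosen coset representative, so the quotient metric is well defined and $\pi$ is a Riemannian submersion, closing the equivalence. The only genuine subtleties beyond this are functional-analytic---one needs $T_eH$ to be a closed, orthogonally complemented subspace so that $P$ exists as a bounded operator---which is handled in the smooth/strong-topology framework adopted in the paper.
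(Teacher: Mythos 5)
Your proposal is correct and follows essentially the same route as the paper: identify the descended metric's well-definedness with $\Ad_H$-invariance of the (projected) inner product on $T_e^\perp H$, and pass to the Lie algebra condition \eqref{descent} by differentiation. The paper's own proof is only a two-line sketch deferring to \cite{K-M}; your version usefully fills in the details it omits, in particular the verification that $A_h = P\,\Ad_h|_{T_e^\perp H}$ is a genuine representation and the integration of the infinitesimal identity back to the finite one (with the correct caveat about the identity component of $H$).
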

\begin{proof}
The proof repeats with minor changes the proof for the case of 
a metric that is degenerate along a subgroup $H$; 
see \cite{K-M}. In the latter case condition \eqref{descent} reduces 
to bi-invariance with respect to the $H$-action 
and there is no need to confine to the orthogonal complement $T_e^\perp H$.
We only observe that in order to descend the orthogonal part of the metric 
must be $\Ad_H$-invariant 
$$
\llangle \Ad_h u, \Ad_h v\rrangle_{T^\perp H} 
= 
\llangle u, v\rrangle_{T^\perp H}
= 
\llangle u, v\rrangle 
\qquad\quad 
(h \in H)
$$
for any $u,v\in T_e^\perp H$. 
The corresponding condition for the Lie algebra action is obtained by differentiation. 
\end{proof}

\begin{remark} 
\upshape
It can be checked that the condition in Proposition \ref{KhesinMisiolekprop} is precisely what one needs in order for the projection map from $G$ to $G/H$ to be a Riemannian submersion, i.e., that the length of every horizontal vector is preserved under the projection. Since the metric on $G$ is assumed right-invariant, this condition reduces to one that can be checked  in the tangent space $T_e G$ at the identity.
\end{remark}

\begin{example} \upshape 
The degenerate right-invariant $\dot{H}^1$ metric \eqref{diverdiv} 
on $\DiffM$ descends to a non-degenerate metric on the quotient $\VolM=\DiffM/\DiffmuM$. 
The skew symmetry condition \eqref{descent} in this case will be verified 
in Section \ref{abcmetricsection} below (Corollary \ref{descentmetricvolum}). 
\end{example} 

\begin{example}\upshape  
The right-invariant $L^2$-metric \eqref{rinvL2} does not verify \eqref{descent} 
and hence does not descend to $\VolM$. 
In fact, if we set $u=v=\nabla{f}$ then for any vector field $w$ with $\diver{w}=0$ 
integration by parts gives
\begin{align*} 
\llangle \ad_w v, v \rrangle_{L^2}
= 
\int_M \langle \nabla_v w, v \rangle \, d\mu 
= 
\int_M \langle f \nabla\Delta{f}, w \rangle \, d\mu       
= 
- \int_M \langle (\Delta{f}) \nabla{f}, w \rangle \, d\mu \,,
\end{align*}
where we used the identity 
$\diver{\nabla_{\nabla f} w} = \diver{\nabla_w\nabla{f}} - w \cdot \Delta f$. 
It is not difficult to find $f$ and $w$ such that the above integral is non-zero. 
For example, we can take $w$ to be the divergence-free part of the field
$(\Delta f) \nabla f$ and  arrange for a suitable $f \in C^\infty(M)$ so that $w\neq 0$. 

Similarly, it follows that the full $H^1$ metric on $\DiffM$ obtained by right-translating 
the $a$-$b$-$c$ product \eqref{abcmetric} 
also fails to descend to a metric on $\VolM$. 
Note that the $c$-term in \eqref{abcmetric} does not contribute in this case. 
\end{example}

\begin{example} \upshape 
As already pointed out in Section \ref{sec:OT} the non-invariant $L^2$ metric \eqref{dmetric} 
descends to Otto's metric on the quotient space whose Riemannian distance 
is the $L^2$-Wasserstein distance on $\Vol(M)$. 
This metric is invariant under the action of $\DiffmuM$. 
\end{example} 

\begin{remark} \upshape 
One can also consider non-invariant Sobolev $H^1$ metrics analogous to 
the non-invariant metric \eqref{dmetric} on $\DiffM$. 
If the manifold $M$ is flat then 
(identifying a neighbourhood of the identity in the diffeomorphism group 
with a neighbourhood of zero in a vector space) 
the energy functional of such a metric evaluated on a curve $\eta(t)$ 
will have the form 
$$
\int_0^1\int_M \big( 
a \left\langle \partial_t \eta,\partial_t \eta \right\rangle
+
b \left\langle \delta\partial_t \eta,\delta\partial_t \eta \right\rangle 
+ 
c \left\langle d \partial_t \eta, d \partial_t \eta \right\rangle 
\big)\,d\mu dt \,.
$$
The first term will not be affected by a volume-preserving change of variables. 
However, the terms involving derivatives in the space variable ($\delta$ and $d$) 
will not be conserved in general.
This argument can be developed to show that among metrics of this type 
the non-invariant $L^2$ metric (corresponding to $b=c=0$) 
is the only one descending to the homogeneous space of densities $\VolM$.
\end{remark} 


%

\begin{table}[htdp]
\caption{The geometric structures associated with $L^2$ and $\dot{H}^1$ optimal transport.}
\begin{center}
\begin{tabular}{|c|c|c|} 
$\DiffM$ & $\DiffmuM$ & $\VolM=\DiffM/\DiffmuM$ \\
\hline
\begin{tabular}{c} 
\small{ $L^2$-metric } \\ 
\small{ (non-invariant) } 
\end{tabular} 
& \begin{tabular}{c} 
\small{ $L^2$-right invariant metric } \\ 
\small{ (ideal hydrodynamics) } 
\end{tabular} 
& \begin{tabular}{c} 
\small{ Wasserstein distance } \\ 
\small{ ($L^2$-optimal transport) } 
\end{tabular} \\
\hline
\begin{tabular}{c} 
\small{ $\dot{H}^1$-metric } \\ \small{ (right-invariant) } 
\end{tabular} 
& \begin{tabular}{c} 
\small{ Degenerate } \\ \small{ (identically vanishing) }
\end{tabular} 
& \begin{tabular}{c} 
\small{ Spherical Hellinger distance } \\ 
\small{ ($\dot{H}^1$-optimal transport) } 
\end{tabular} \\
\hline
\end{tabular}
\end{center}
\label{default}
\end{table}%


\section{The $\dot{H}^1$-spherical geometry of the space of densities} 
\label{aczero}
\nequation

In this section we study the homogeneous space of densities $\Vol(M)$ 
on a closed $n$-dimensional Riemannian manifold $M$ 
equipped with the right-invariant metric induced by 
the $\dot{H}^1$ inner product \eqref{diverdiv}, that is 
\begin{equation} \label{multiHSmetric}
\llangle u\circ \eta, v\circ\eta\rrangle_{\dot{H}^1} 
= \frac 14
\int_M \diver{u} \cdot\diver{v} \, d\mu 
\end{equation}
for any $u, v \in T_e\DiffM$ and $\eta \in \DiffM$. 
It corresponds to the $a=c=0$ term 
in the general ($a$-$b$-$c$) Sobolev $H^1$ metric \eqref{abcmetric} 
of the Introduction in which, to simplify calculations, we set $b=1/4$. 
(We will return to the case of $b>0$ in Sections \ref{abcmetricsection} 
and \ref{sec:Met-orbits}.) 

The geometry of this metric on the space of densities turns out to be particularly remarkable. 
Indeed, we prove below that $\VolM$ endowed with the metric \eqref{multiHSmetric} 
is isometric to an open subset of a round sphere in the space of square-integrable functions 
on $M$.\footnote{This construction has an antecedent in the special case 
of the group of circle diffeomorphisms considered by Lenells \cite{L1, L2}.} 
In the next section, we will show that \eqref{multiHSmetric} corresponds to 
the Bhattacharyya coefficient (also called the affinity) in probability and statistics 
and that it gives rise to a spherical variant of the Hellinger distance. 
Thus the right-invariant $\dot{H}^1$-metric provides good alternative notions of 
``distance'' and ``shortest path'' for (smooth) probability measures on $M$ 
to the ones obtained from the $L^2$-Wasserstein constructions 
used in standard optimal transport problems.

\subsection{An infinite-dimensional sphere} 
We begin by constructing an isometry between the homogeneous space of densities 
$\VolM$ and an open subset of the sphere of radius $r$ 
$$
S^\infty_r = \left\{f \in L^2(M, d\mu): \int_M f^2\, d\mu =  r^2 \right\}
$$ 
in the Hilbert space $L^2(M, d\mu)$.
%

As before, we let $\Jac_\mu(\eta)$ denote the Jacobian of $\eta$ with respect to 
the reference form $\mu$ and let $\mu(M)$ stand for the total volume of $M$.

\begin{theorem} \label{isometricHSsphere}
The map  $\Phi:\DiffM \to L^2(M, d\mu)$ given by 
$$
\Phi: \eta \mapsto f = \sqrt{\Jac_\mu\eta} 
$$
defines an isometry from the space of densities $\VolM=\DiffM/\DiffmuM$ 
equipped with the $\dot H^1$-metric \eqref{multiHSmetric} 
to an open subset of the sphere $S^\infty_r \subset L^2(M, d\mu)$  
of radius 
$$
r = \sqrt{\mu(M)} 
$$ 
with the standard $L^2$ metric. 

For $s > n/2 + 1$ the map $\Phi$ is a diffeomorphism between $\Diff^s(M)/\Diff_\mu^s(M)$ 
and the convex open subset of $S^\infty_r \cap H^{s-1}(M)$ which
consists of strictly positive functions on $M$.
\end{theorem}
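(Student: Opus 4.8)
The plan is to verify by direct computation that $\Phi$ takes values in the sphere, is invariant under the subgroup action so that it descends to the space of densities, and pulls back the round $L^2$ metric to \eqref{multiHSmetric}; the image is then identified using Moser's theorem, and the Sobolev statement is obtained from the inverse function theorem.

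First I would check the two elementary points. That $\Phi$ lands in $S^\infty_r$ follows from the change-of-variables formula: since $f^2=\Jac_\mu\eta$ and $\Jac_\mu\eta\cdot\mu=\eta^*\mu$, one has $\int_M f^2\,d\mu=\int_M\eta^*\mu=\mu(M)=r^2$, while $f>0$ because $\eta$ preserves orientation. For invariance, the cocycle identity \eqref{jacobiancomposition} gives, for $\xi\in\DiffmuM$,
$$
\Jac_\mu(\xi\circ\eta)=(\Jac_\mu\xi\circ\eta)\,\Jac_\mu\eta=\Jac_\mu\eta,
$$
so $\Phi(\xi\circ\eta)=\Phi(\eta)$ and $\Phi$ factors through a well-defined map on the space of densities $\VolM$.

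The heart of the argument is the computation of $d\Phi$. Taking a curve $\eta(t)$ with $\eta(0)=\eta$ and $\dot\eta(0)=u\circ\eta$ for $u\in T_e\DiffM$, I would use $\frac{d}{dt}\big|_{0}\eta(t)^*\mu=\eta^*\Lie_u\mu$ together with $\Lie_u\mu=(\diver u)\,\mu$ to obtain $\frac{d}{dt}\big|_{0}\Jac_\mu\eta(t)=(\diver u\circ\eta)\,\Jac_\mu\eta$, and hence
$$
d\Phi_\eta(u\circ\eta)=\tfrac12\,(\diver u\circ\eta)\,f .
$$
Squaring and changing variables back through $\eta$ then yields
$$
\|d\Phi_\eta(u\circ\eta)\|_{L^2}^2=\tfrac14\int_M(\diver u\circ\eta)^2 f^2\,d\mu=\tfrac14\int_M(\diver u\circ\eta)^2\,\eta^*\mu=\tfrac14\int_M(\diver u)^2\,d\mu,
$$
which is exactly $\llangle u\circ\eta,u\circ\eta\rrangle_{\dot H^1}$. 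This computation simultaneously shows that $\ker d\Phi=\{u:\diver u=0\}=T_e\DiffmuM$, precisely the degeneracy directions of \eqref{multiHSmetric}, so $\Phi$ descends to a \emph{metric} isomorphism from $\VolM$ with its non-degenerate quotient metric onto the image with the induced round metric.

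To identify the image I would prove surjectivity onto the positive part of the sphere: given $f\in S^\infty_r$ with $f>0$, the form $f^2\mu$ is a positive volume form with $\int_M f^2\mu=\mu(M)=\int_M\mu$, so Moser's theorem \cite{moser} furnishes $\eta\in\DiffM$ with $\eta^*\mu=f^2\mu$, i.e.\ $\Jac_\mu\eta=f^2$ and $\Phi(\eta)=f$. These positive functions form the intersection of $S^\infty_r$ with the open positive cone of $L^2$; the intersection is open (strict positivity is an open condition once $H^{s-1}\hookrightarrow C^0$) and geodesically convex, since a minimizing great-circle arc between two positive functions has nonnegative coefficients and hence stays positive. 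Finally, for the Sobolev statement I would invoke the inverse function theorem on Hilbert manifolds: for $s>n/2+1$ the spaces $\Diff^s(M)$, $\Diff_\mu^s(M)$ and their quotient are smooth Hilbert manifolds, $\Phi$ is smooth (the Jacobian lies in $H^{s-1}$ and is positive, and the square root is smooth on positive $H^{s-1}$ functions), and the descended differential is a linear isomorphism onto $T_f(S^\infty_r\cap H^{s-1})=\{h:\int_M fh\,d\mu=0\}$, surjectivity following from the solvability of $\diver u=\psi$ for mean-zero $\psi$ via $u=\grad\Delta^{-1}\psi$. Together with the bijection above this makes $\Phi$ a diffeomorphism onto the convex open positive part of the sphere. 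I expect the main obstacle to lie not in the geometric computation, which is short, but in the functional-analytic bookkeeping of this last step: checking that $d\Phi$ is a \emph{topological} isomorphism with bounded inverse in the $H^{s-1}$ topology and that regularity is genuinely preserved under the nonlinear square-root map.
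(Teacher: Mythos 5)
Your proposal follows essentially the same route as the paper: the change-of-variables computation placing $\Phi(\eta)$ on $S^\infty_r$, the invariance under left composition with $\DiffmuM$, the differentiation of the Jacobian giving $d\Phi_\eta(u\circ\eta)=\tfrac12(\diver u\circ\eta)\,f$ and hence the isometry property after changing variables, and Moser's theorem for surjectivity onto the positive part of the sphere are all exactly the paper's steps; your extra observations (the kernel of $d\Phi$, the geodesic convexity of the image, the inverse-function-theorem bookkeeping in $H^{s-1}$) are consistent elaborations of what the paper dismisses as routine. The one step you omit is the \emph{global} injectivity of the descended map: knowing that $\ker d\Phi_\eta$ equals the tangent space to the fiber only makes the descended map an isometric immersion, and a surjective local isometry between connected spaces need not be injective. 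You need the paper's one-line argument: if $\Jac_\mu\eta_1=\Jac_\mu\eta_2$, then the cocycle identity \eqref{jacobiancomposition} gives $\Jac_\mu(\eta_1\circ\eta_2^{-1})=1$, so $\eta_1\circ\eta_2^{-1}\in\DiffmuM$ and $\eta_1,\eta_2$ lie in the same coset. With that inserted, the ``bijection'' you invoke in your final paragraph is actually established and the rest of your argument goes through.
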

\begin{proof} 
First, observe that the Jacobian of any orientation-preserving diffeomorphism 
is a strictly positive function. 
Next, using the change of variables formula, we find that 
$$
\int_M \Phi^2(\eta) \, d\mu 
= 
\int_M \Jac_\mu\eta \, d\mu =  \int_M \eta^*\, d\mu =  \int_{\eta(M)} d\mu 
= 
\mu(M) 
$$
which shows that $\Phi$ maps diffeomorphisms into $S^\infty_r$. 
Furthermore, observe that since for any $\xi \in \DiffmuM$ we have 
$$
\Jac_\mu(\xi \circ \eta)\mu = (\xi \circ \eta)^*\mu = \eta^*\mu = \Jac_\mu(\eta)\mu;
$$
it follows that $\Phi$ is well-defined as a map 
from $\DiffM/\DiffmuM$.

Next, suppose that for some diffeomorphisms $\eta_1$ and $\eta_2$ we have 
$\Jac_\mu(\eta_1) = \Jac_\mu(\eta_2)$. 
Then $(\eta_1 \circ \eta_2^{-1})^*\mu = \mu$ from which we deduce that $\Phi$ is injective.
Moreover, differentiating the formula $\Jac_\mu(\eta)\mu = \eta^\ast \mu$ 
with respect to $\eta$ and evaluating at $U \in T_\eta \DiffM$, we obtain 
$$
\Jac_{\eta^*\mu}(U)
= 
\diver(U \circ \eta^{-1}) \circ \eta \; \Jac_\mu\eta. 
$$
Therefore, letting $\pi:\DiffM \to \DiffM/\DiffmuM$ denote the bundle projection, 
we find that 
\begin{align*}
\llangle (\Phi\circ\pi)_{*\eta}(U), (\Phi\circ\pi)_{*\eta}(V) \rrangle_{L^2}
&= 
 \frac 14\int_M (\diver u \circ \eta)\cdot (\diver v \circ \eta) \, \Jac_\mu \eta  \,  d\mu
	\\
&= 
 \frac 14 \int_M \diver u \cdot \diver v \, d\mu 
= 
\llangle U, V \rrangle_{\dot{H}^1},
\end{align*}
for any elements $U = u \circ \eta$ and $V = v \circ \eta$ in $T_{\eta}\DiffM$ 
where $\eta \in \DiffM$. 
This shows that $\Phi$ is an isometry.

When $s>n/2 +1$ the above arguments extend routinely to the category of Hilbert manifolds 
modelled on Sobolev $H^s$ spaces. 
The fact that any positive function in $S^\infty_r \cap H^{s-1}(M)$ 
belongs to the image of the map $\Phi$ follows from Moser's lemma \cite{moser} 
whose generalization to the Sobolev setting can be found for example in \cite{Ebin-Marsden}. 
\end{proof} 

As an immediate consequence we obtain the following result. 

\begin{corollary} \label{cor:sphere} 
The space $\VolM=\DiffM/\DiffmuM$ equipped with the right-invariant metric 
\eqref{multiHSmetric} has strictly positive constant sectional curvature equal to 
$1/\mu(M)$.
\end{corollary}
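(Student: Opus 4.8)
The plan is to deduce the corollary directly from the isometry established in Theorem \ref{isometricHSsphere}, since sectional curvature is preserved under isometries. By that theorem, $\VolM$ with the metric \eqref{multiHSmetric} is isometric to an open subset of the round sphere $S^\infty_r \subset L^2(M,d\mu)$ of radius $r=\sqrt{\mu(M)}$, carrying the metric induced from the flat $L^2$ inner product. Thus it suffices to compute the sectional curvature of a round sphere of radius $r$ sitting inside a Hilbert space and to confirm that the classical finite-dimensional formula carries over to the infinite-dimensional setting.

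First I would recall the elementary fact that an $n$-sphere of radius $r$ in Euclidean space has constant sectional curvature $1/r^2$, and argue that the same computation applies verbatim to the sphere $S^\infty_r$ in $L^2(M,d\mu)$. The cleanest route is via the Gauss equation for a submanifold: for the sphere of radius $r$ in a flat Hilbert space, the second fundamental form at a point $f$ is $\mathrm{II}(X,Y) = -\frac{1}{r^2}\llangle X,Y\rrangle_{L^2}\, f$ (the outward unit normal being $f/r$, and the shape operator being $-\frac{1}{r}\,\id$ on the tangent space $\{g : \llangle f,g\rrangle_{L^2}=0\}$). Since the ambient space is flat, the Gauss equation gives, for orthonormal tangent vectors $X,Y$,
\begin{equation}\label{eq:gauss-curv}
K(X,Y) = \frac{\llangle \mathrm{II}(X,X),\mathrm{II}(Y,Y)\rrangle - \lvert \mathrm{II}(X,Y)\rvert^2}{\lvert X\rvert^2 \lvert Y\rvert^2 - \llangle X,Y\rrangle^2} = \frac{1}{r^2}.
\end{equation}
Substituting $r^2 = \mu(M)$ then yields the stated value $1/\mu(M)$.

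The one point requiring genuine (if light) care is the infinite-dimensional justification: I would note that the sphere $S^\infty_r$ in a Hilbert space is a smooth submanifold whose tangent space at $f$ is the closed hyperplane orthogonal to $f$, that the normal is well-defined and smooth, and that the Levi-Civita connection and curvature tensor of the induced weak metric exist and satisfy the Gauss equation exactly as in finite dimensions — this is standard for a codimension-one submanifold of flat Hilbert space and can be invoked without reworking the foundations. Since the curvature formula \eqref{eq:gauss-curv} involves only the two-plane spanned by $X$ and $Y$, no subtlety beyond the two-dimensional computation arises, and the value $1/r^2$ is independent of the chosen plane, giving constant positive curvature.

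I expect the main (and only mild) obstacle to be purely expository: asserting cleanly that the Gauss equation and the notion of sectional curvature transfer to the weak Riemannian setting of an open subset of $S^\infty_r$. This is not a deep point here because the relevant computation is intrinsically two-dimensional and the ambient geometry is flat, so I would simply cite the standard submanifold geometry of Hilbert spheres rather than belabour the analytic framework, consistent with the functional-analytic conventions already adopted in the paper.
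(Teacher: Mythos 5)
Your proposal is correct and follows essentially the same route as the paper: the paper's proof likewise reduces the statement to the constant curvature $1/r^2$ of the sphere $S^\infty_r$ via the isometry of Theorem \ref{isometricHSsphere} and cites the Gauss--Codazzi equations for the computation. Your version merely spells out the second fundamental form and the Gauss equation explicitly, which is a fine (and correct) elaboration of the same argument.
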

\begin{proof}
As in finite dimensions, sectional curvature of the sphere $S^\infty_r$ 
equipped with the induced metric is constant and equal to $1/r^2$. 
The computation is straightforward using for example the Gauss-Codazzi equations.
\end{proof}

It is worth pointing out that the bigger the volume $\mu(M)$ of the manifold 
the bigger the radius of the sphere $S^\infty_r$ and therefore, by the above corollary, 
the smaller the curvature of the corresponding space of densities $\VolM$. 
Thus, in the case of a manifold $M$ of infinite volume 
one would expect the space of densities 
with the $\dot{H}^1$-metric \eqref{multiHSmetric} to be ``flat.''
Observe also that rescaling the metric  (\ref{multiHSmetric}) to
$$
b\int_M \diver{u} \cdot\diver{v} \, d\mu
$$
changes the radius of the sphere to $r=2\sqrt b \sqrt{\mu(M)}$.


\subsection{The $\dot{H}^1$-distance and $\dot{H}^1$-diameter of $\DiffM/\DiffmuM$} 

The \textit{right invariant} metric \eqref{multiHSmetric} induces 
a Riemannian distance between  densities (measures) of fixed total volume on $M$ 
that is analogous to the Wasserstein distance \eqref{eq:wasser} induced by 
the \textit{non-invariant} $L^2$ metric used in the standard optimal transport. 
It turns out that the isometry $\Phi$ constructed in Theorem \ref{isometricHSsphere} 
makes the computations of distances in $\VolM$ with respect to \eqref{multiHSmetric} 
simpler than one would expect by comparison with the Wasserstein case. 

\begin{figure} 
\begin{center}
 \includegraphics[width=.8\textwidth]{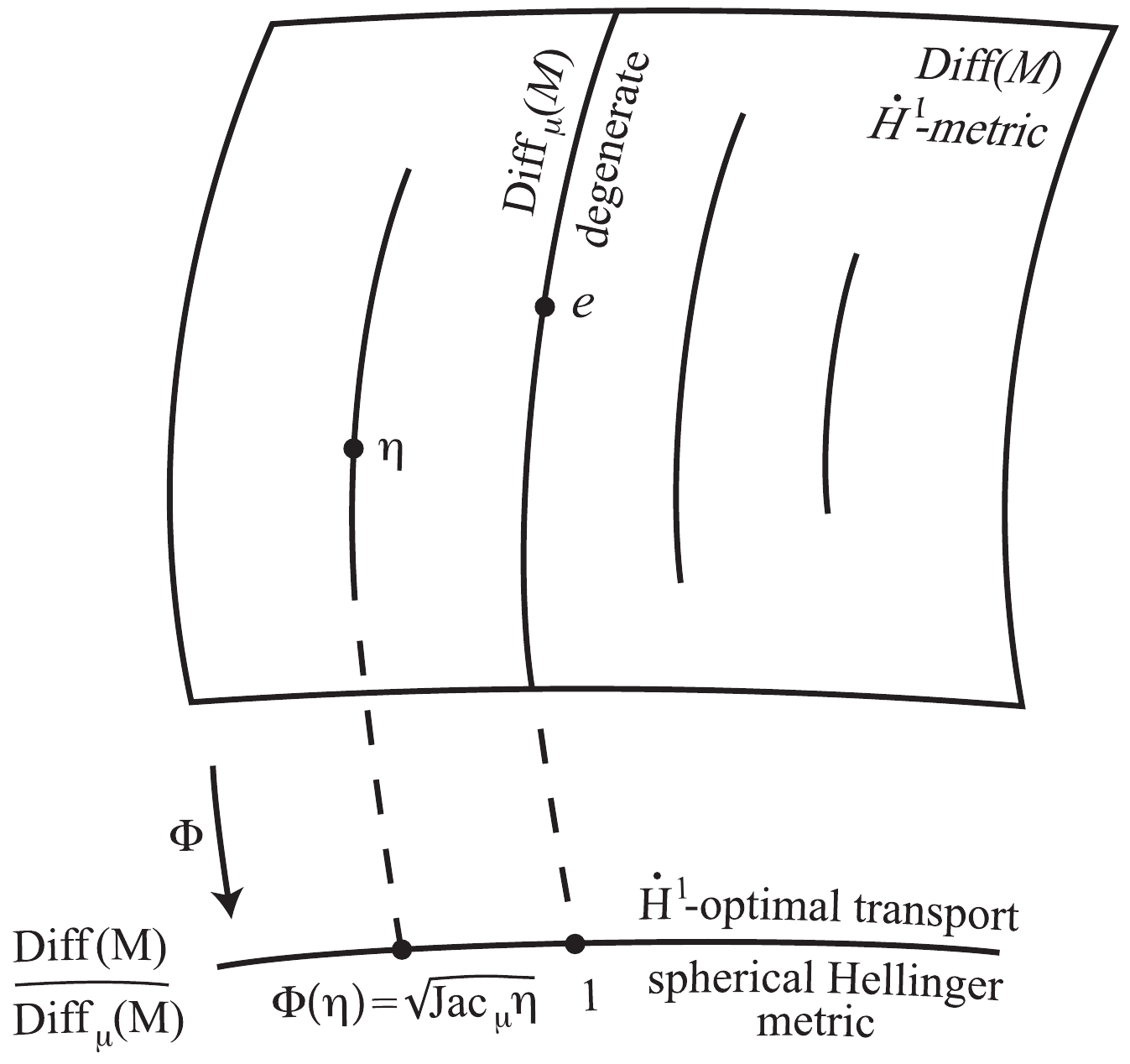}
     \begin{figuretext}\label{H1fibration.pdf}
        The fibration of $\DiffM$ with fiber $\DiffmuM$ determined by the reference density $\mu$ together with the $\dot{H}^1$-metric. 
         \end{figuretext}
     \end{center}
\end{figure}

Consider two (smooth) measures $\lambda$ and $\nu$ on $M$ 
of the same total volume $\mu(M)$ which are absolutely continuous 
with respect to the reference measure $\mu$. 
Let $d\lambda/d\mu$ and $d\nu/d\mu$ 
be the corresponding Radon-Nikodym derivatives of $\lambda$ and $\nu$ 
with respect to $\mu$.

\begin{theorem} \label{distance}
The  Riemannian distance defined by the $\dot{H}^1$-metric \eqref{multiHSmetric} 
between measures $\lambda$ and $\nu$ in the density space $\VolM=\DiffM/\DiffmuM$ 
is 
\begin{equation} \label{dist_on_sphere}
\dist_{\dot H^1} (\lambda,\nu)
= 
\sqrt{\mu(M)}
\arccos{ \left( 
\frac{1}{\mu(M)} \int_M \sqrt{ \frac{d\lambda}{d\mu} \, \frac{d\nu}{d\mu} } \, d\mu 
\right) } \,.
\end{equation}

Equivalently, if $\eta$ and $\zeta$ are two diffeomorphisms mapping 
the volume form $\mu$ to $\lambda$ and $\nu$, respectively,
then the $\dot H^1$-distance between $\eta$ and $\zeta$ is
$$
\dist_{\dot H^1} (\eta,\zeta)
= 
\dist_{\dot H^1} (\lambda,\nu)
= 
\sqrt{\mu(M)}
\arccos \left( \frac{1}{\mu(M)}\int_M\sqrt{\Jac_\mu\eta \cdot \Jac_\mu\zeta}\,d\mu \right)\,.
$$ 
\end{theorem}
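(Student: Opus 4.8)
The plan is to deduce everything from the isometry $\Phi$ of Theorem \ref{isometricHSsphere}, which identifies $\VolM$ with an open subset of the round sphere $S^\infty_r \subset L^2(M,d\mu)$ of radius $r=\sqrt{\mu(M)}$. Since Riemannian distance is preserved by an isometry, the $\dot{H}^1$-distance between two densities equals the intrinsic distance on the image subset between the corresponding points $\Phi(\eta)=\sqrt{\Jac_\mu\eta}$ and $\Phi(\zeta)=\sqrt{\Jac_\mu\zeta}$. The first step is therefore to recall that on a round sphere of radius $r$ in a Hilbert space the geodesics are great circles and the geodesic distance between points $f,h$ is $r\,\arccos\!\big(\langle f,h\rangle_{L^2}/r^2\big)$, exactly as in finite dimensions; this reduces to a two-dimensional computation in the plane spanned by $f$ and $h$ and uses only $\|f\|=\|h\|=r$.

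The point that needs genuine care is that $\Phi$ is an isometry only onto an \emph{open} subset of $S^\infty_r$, namely the strictly positive functions, so a priori the intrinsic distance on this subset could exceed the ambient great-circle distance if the minimizing arc were to leave it. I would resolve this by showing the positive functions form a geodesically convex subset. For $f=\sqrt{\Jac_\mu\eta}$ and $h=\sqrt{\Jac_\mu\zeta}$ the inner product $\langle f,h\rangle_{L^2}=\int_M\sqrt{\Jac_\mu\eta\cdot\Jac_\mu\zeta}\,d\mu$ is strictly positive, so the angle $\theta$ between them satisfies $\theta<\pi/2$. Writing the minimizing great-circle arc as $\gamma(s)=\tfrac{\sin(\theta-s)}{\sin\theta}\,f+\tfrac{\sin s}{\sin\theta}\,h$ for $s\in[0,\theta]$, both coefficients are nonnegative, so $\gamma(s)>0$ pointwise and the minimizing geodesic stays inside the image of $\Phi$. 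Hence the intrinsic distance coincides with the ambient spherical one. This convexity/minimality verification is the main obstacle; everything before and after it is routine.

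With convexity in hand the remaining steps are substitution. Evaluating the inner product gives $\langle\Phi(\eta),\Phi(\zeta)\rangle_{L^2}=\int_M\sqrt{\Jac_\mu\eta\cdot\Jac_\mu\zeta}\,d\mu$, so the distance is $\sqrt{\mu(M)}\,\arccos\!\big(\tfrac{1}{\mu(M)}\int_M\sqrt{\Jac_\mu\eta\cdot\Jac_\mu\zeta}\,d\mu\big)$, which is the second displayed formula. To obtain the first, I use that $\eta$ and $\zeta$ send $\mu$ to $\lambda$ and $\nu$, so that $\Jac_\mu\eta=d\lambda/d\mu$ and $\Jac_\mu\zeta=d\nu/d\mu$; substituting these identities yields the expression in terms of Radon-Nikodym derivatives and completes the proof.
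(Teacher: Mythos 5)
Your proof is correct and follows essentially the same route as the paper: transport the problem via the isometry $\Phi$ of Theorem \ref{isometricHSsphere} to the round sphere $S^\infty_r$ and read off the great-circle distance. The one place you go beyond the paper's write-up is the explicit check that the minimizing arc $\gamma(s)=\tfrac{\sin(\theta-s)}{\sin\theta}f+\tfrac{\sin s}{\sin\theta}h$ stays pointwise positive, i.e.\ inside the image of $\Phi$; the paper asserts the convexity of this image in Theorem \ref{isometricHSsphere} but leaves the verification implicit, so your added argument is a welcome (and correct) filling of that small gap rather than a different approach.
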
 

\begin{proof} 
Let $f^2=d\lambda/d\mu$ and $g^2=d\nu/d\mu$. 
If $\lambda = \eta^\ast\mu$ and $\nu = \zeta^\ast \mu$ then using the explicit isometry $\Phi$ 
constructed in Theorem \ref{isometricHSsphere} it is sufficient to compute 
the distance between the functions 
$\Phi(\eta)=f$ and $\Phi(\zeta)=g$ 
considered as points on the sphere $S^\infty_r$ with the induced metric from $L^2(M,d\mu)$. 
Since geodesics of this metric are the great circles on $S^\infty_r$ 
it follows that the length of the corresponding arc joining $f$ and $g$ 
is given by 
$$
r \arccos{ \left(\frac{1}{r^2} \int_M f g\,d\mu \right) }\,,
$$
which is precisely formula \eqref{dist_on_sphere}.
\end{proof}

We can now compute precisely the diameter of the space of densities using standard formula 
$$
\diam_{\dot H^1} \,\Vol (M):=\sup{\big\{ \dist_{\dot H^1} (\lambda,\nu) :~\lambda, \nu \in \Vol(M) \big\} }.
$$

\begin{proposition} \label{diameter}
The diameter of the space $\Vol(M)$ equipped with the $\dot H^1$-metric 
\eqref{multiHSmetric} equals $\pi\sqrt{\mu(M)} /2$.
\end{proposition}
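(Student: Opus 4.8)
The plan is to reduce everything to the explicit distance formula \eqref{dist_on_sphere} of Theorem \ref{distance} and then to solve the resulting extremal problem for the Bhattacharyya coefficient. Writing $f = \sqrt{d\lambda/d\mu}$ and $g = \sqrt{d\nu/d\mu}$, so that $f$ and $g$ range over the strictly positive functions in $S^\infty_r$ with $r = \sqrt{\mu(M)}$, formula \eqref{dist_on_sphere} reads
$$
\dist_{\dot H^1}(\lambda,\nu) = \sqrt{\mu(M)}\,\arccos\!\left( \frac{1}{\mu(M)}\int_M fg\,d\mu \right).
$$
Since $\arccos$ is continuous and strictly decreasing on $[0,1]$, taking the supremum over all pairs amounts to computing the infimum of the coefficient $c(\lambda,\nu) := \tfrac{1}{\mu(M)}\int_M fg\,d\mu$. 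Thus I would first record
$$
\diam_{\dot H^1}\Vol(M) = \sqrt{\mu(M)}\,\arccos\!\left( \inf_{\lambda,\nu} c(\lambda,\nu) \right)
$$
and reduce the proposition to the single claim that this infimum equals $0$.

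For the upper bound I would observe that because $f$ and $g$ are strictly positive the integrand $fg$ is strictly positive, so $c(\lambda,\nu) > 0$ for every pair; hence $\arccos$ of it is strictly less than $\pi/2$ and $\dist_{\dot H^1}(\lambda,\nu) < \tfrac{\pi}{2}\sqrt{\mu(M)}$ for all $\lambda,\nu$, giving $\diam_{\dot H^1}\Vol(M)\le \pi\sqrt{\mu(M)}/2$. The Cauchy--Schwarz inequality $\int_M fg\,d\mu \le \|f\|_{L^2}\|g\|_{L^2} = \mu(M)$ simultaneously confirms $c(\lambda,\nu)\le 1$, so the argument of $\arccos$ stays in its proper range and no pair can exceed this bound.

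The substance of the proof, and the main obstacle, is to show that $c(\lambda,\nu)$ can be made arbitrarily small, i.e. that densities may be chosen ``nearly orthogonal'' on the sphere. The idea is to concentrate $\lambda$ and $\nu$ on disjoint regions. Concretely, I would fix two disjoint open sets $A,B\subset M$ of positive volume and, for each $n$, build smooth strictly positive densities $\lambda_n,\nu_n$ of total volume $\mu(M)$ with $d\lambda_n/d\mu$ equal to a bounded constant on $A$ and of order $1/n$ off $A$, and symmetrically $d\nu_n/d\mu$ of bounded order on $B$ and of order $1/n$ off $B$. A convenient model is the pair of step densities equal to the appropriate normalizing constant on $A$ (respectively $B$) and to $1/n$ elsewhere; a standard mollification followed by renormalization makes them smooth and keeps them strictly positive without affecting the limit. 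For such a pair the overlap integral is $O(n^{-1/2})$: on $A$ and on $B$ the integrand $\sqrt{(d\lambda_n/d\mu)(d\nu_n/d\mu)}$ is of order $n^{-1/2}$ (one density is bounded while the other is of order $1/n$), and on the complement $M\setminus(A\cup B)$ it is of order $1/n$. Hence $c(\lambda_n,\nu_n)\to 0$ and $\dist_{\dot H^1}(\lambda_n,\nu_n)\to \pi\sqrt{\mu(M)}/2$, which together with the upper bound yields the stated diameter. The only technical care needed is in the mollification step, where one verifies that smoothing the step densities perturbs the overlap integral by a controllable amount and preserves both strict positivity and the normalization; these all follow from elementary estimates.
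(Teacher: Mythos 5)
Your proof is correct and follows essentially the same route as the paper: the upper bound comes from strict positivity of the overlap integrand, and the diameter is attained in the limit by driving the Bhattacharyya coefficient to zero with nearly mutually singular densities. The only cosmetic difference is your choice of witness sequence (two densities concentrated on disjoint sets $A$ and $B$, each of order $1/n$ off its set) versus the paper's single peaked density $\nu_N$ supported near a small disk paired with the uniform reference $\mu$; both yield the overlap $\to 0$ by the same elementary estimate.
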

\begin{proof} 
First, observe that for any two densities $\lambda$ and $\nu$ in $\Vol(M)$ 
we have 
$$
\int_M\sqrt{ \frac{d\lambda}{d\mu} \, \frac{d\nu}{d\mu} } \, d\mu > 0. 
$$ 
The arc-cosine of this integral is less than $\pi/2$ which yields 
$\pi\sqrt{\mu(M)}/2$ 
as an upper bound for the diameter.\footnote{In particular, this means that the image 
of $\DiffM$ under $\Phi$ 
is a relatively small subset of $S^\infty_r$ 
whose points can be ``seen'' from the origin $0\in L^2(M, d\mu)$ 
at an angle less than $\pi/2$, see Figure \ref{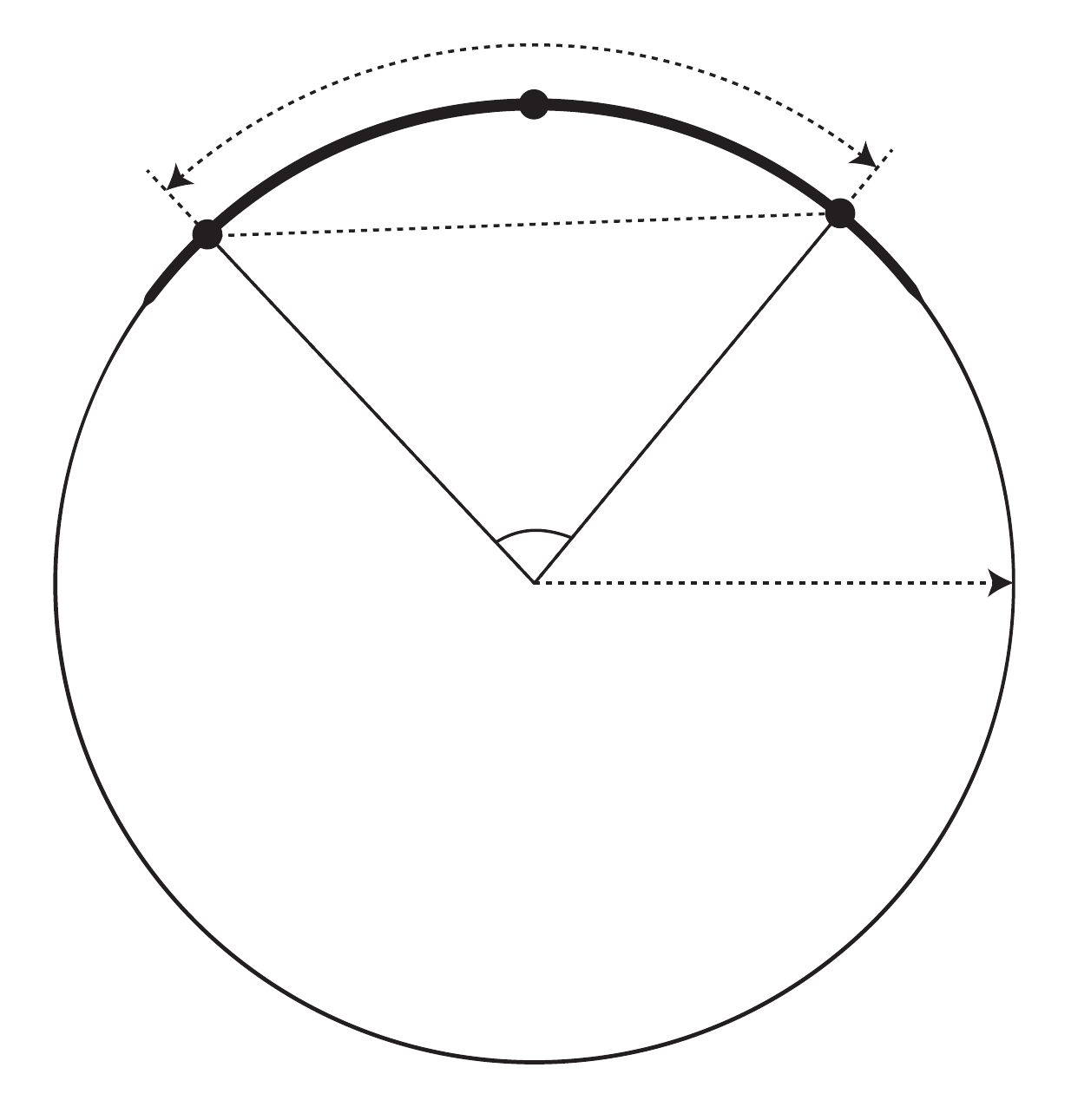}.} 

In order to show that the diameter of $\Vol(M)$ is in fact equal to 
$\pi\sqrt{\mu(M)}/2$ 
we construct a sequence of measures such that the distance between them 
converges to this limit. 
Given any large $N$ consider a disk $D_N$ of volume $\mu(M)/N$ with respect to 
the reference measure $\mu$. 
Let $\nu_N$ be a smooth measure whose Radon-Nikodym derivative 
$f_N^2=d\nu_N/d\mu$ is a mollification of 
$N\cdot \chi_{D_N}$, 
where $\chi_{D_N}$ is the characteristic function of the disk, 
see Figure \ref{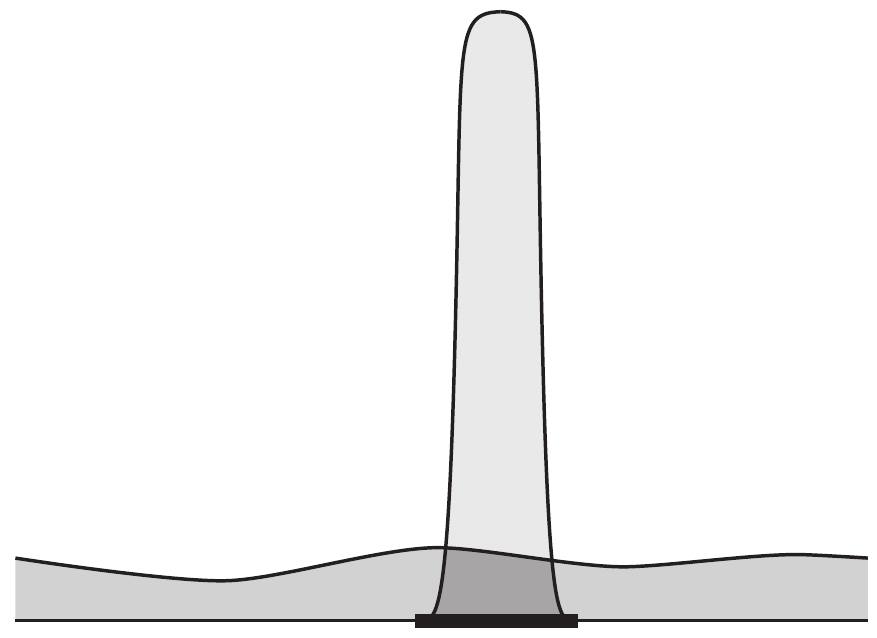}. 
Note that the total volume of $\nu_N$ is the same as that of $\mu$. 

\begin{figure}
\begin{center}
  \begin{overpic}[width=.6\textwidth]{densities.pdf}
    \put(53,-5){\large $D_N$}
    \put(44,65){\large $\nu_N$}
    \put(-4,8){\large $\mu$}
  \end{overpic}
  \bigskip
  \begin{figuretext}\label{densities.pdf}
        A density $\mu$  thinly distributed over the manifold and 
a peaked density $\nu_N$ with the same total volume as $\mu$.
  \end{figuretext}
  \end{center}
\end{figure}

We will now estimate the distance between $\nu_N$ and $\mu$. 
Since on the disk the function $f_N$ is approximately equal to $\sqrt{N}$, 
the integral appearing in the argument of arc-cosine in \eqref{dist_on_sphere} 
can be estimated by 
$$
\int_M \sqrt{ \frac{d\mu}{d\mu} \, \frac{d\nu_N}{d\mu} } \, d\mu 
\simeq  
\int_{D_N}  \sqrt{f_N^2} \, d\mu 
\simeq 
\frac{\mu(M)}{N} \, \sqrt{N} \longrightarrow 0, 
\qquad \text{ as }\quad
N \to \infty. 
$$
It follows now that $\dist_{\dot H^1} (\mu,\nu_N) \to \pi\sqrt{\mu(M)} /2$ 
as $N \to \infty$ which completes the proof. 
\end{proof}


\begin{remark}[Applications to shape theory] \upshape 
It is tempting to apply the distance $\dist_{\dot{H}^1}$ to problems of computer vision 
and shape recognition. 

Given a bounded domain $E$ in the plane (a 2D ``shape'') 
one can mollify the corresponding characteristic function $\chi_E$ and 
associate with it (up to a choice of the mollifier) a smooth measure $\nu_E$ 
normalized to have total volume equal to 1. 
One can now use the above formula \eqref{dist_on_sphere} to introduce a notion of 
``distance'' between two 2D ``shapes'' $E$ and $F$ 
by integrating the product of the corresponding Radon-Nikodym derivatives 
with respect to the 2D Lebesgue measure. 
It is not difficult to check that 
$\dist_{\dot{H}^1}(\nu_E,\nu_F)= \pi/2$ 
when $\nu_E$ and $\nu_F$ are mutually singular 
and that 
$\dist_{\dot{H}^1}(\nu_E,\nu_F)=0$ 
whenever they coincide. 
This works also in the case when one of the measures is of delta-type and 
the other is very thinly distributed over a large area.

In this context it is interesting to compare the spherical metric 
to other right-invariant Sobolev metrics that have been introduced in shape theory. 
For example, in \cite{mm} the authors proposed to study 2D ``shapes'' using 
a certain K\"{a}hler metric on the Virasoro orbits of type $\Diff(S^1)/\Rot(S^1)$, 
see e.g., \cite{ky, tt}. 
This metric is particularly interesting because it is related to 
the unique complex structure on the Virasoro orbits.
Furthermore, it has negative sectional curvature. 
We refer to \cite{mm} for details. 
\end{remark}


\begin{example}[\textit{$\dot{H}^1$-gradients on the space of densities}] 
\label{ex:H1grad} 
\upshape
The $L^2$-Wasserstein metric 
\eqref{gradmetric} induced on the space of densities 
was used to study certain dissipative PDE (such as the heat and porous medium equations) as gradient flow equations on $\VolM$, see \cite{otto, v}. 
The analogous computations of the $\dot{H}^1$-gradients on $\VolM$ simplify due to 
the isometry with the round $L^2$-sphere discussed above.

For instance, let $H(\rho)$ be a functional of the general form
$$ 
H(\rho)=\int_M h(\rho)\,d\mu 
$$ 
where $\rho \in \VolM$ and $h$ is a smooth function on densities. 
Then one computes that $\mathrm{grad}_{\dot{H}^1} H = h'(\rho)$.

Here is a quick way to see this. 
For a small real parameter $\epsilon$ and any mean-zero function $\beta$ on $M$ 
we have 
$$ 
H(\rho+\epsilon\beta)
= 
\int_M h(\rho+\epsilon \beta)\,d\mu
= 
\int_M h(\rho) \, d\mu + \epsilon\int_M h'(\rho) \beta \, d\mu + \mathcal{O}(\epsilon^2)\,.
$$ 
By Theorem \ref{isometricHSsphere} one can perform the calculation using the $L^2$ metric 
on the sphere $S^\infty_r$ 
and identify the variational derivative $\delta H/\delta\rho$ of $H$ 
with its gradient $\mathrm{grad}_{\dot{H}^1} H$ so that 
$$
\Big\langle \frac{\delta H}{\delta \rho}, \beta \Big\rangle 
= 
\frac{d}{d\epsilon}H( \rho + \epsilon\beta ) \big|_{\epsilon=0} 
=
\int_M h'(\rho) \beta \, d\mu\, 
$$
which gives the result. 

Similarly, for the functional 
$$
F(\rho):=\frac 12\int |\nabla\rho|^2 \mu
$$
one obtains $\text{grad}_{\dot H^1} F(\rho)=-\Delta\rho$.
Observe that in this case the associated gradient flow equation 
$$
\partial_t \rho = -\text{grad}_{\dot H^1}F
$$
can be interpreted as the heat equation on densities 
$$
\partial_t \rho=\Delta \rho. 
$$ 
Thus the Dirichlet functional in the right-invariant $\dot H^1$ metric on $\VolM$ 
yields the same heat equation as 
the Boltzmann (relative) entropy functional 
$E(\rho) = \int_M \rho \log{\rho} \, d\mu$ 
in the $L^2$-Wasserstein metric. 
This provides yet one more relation of these two gradient approaches to
the heat equation, discussed in \cite{ags}. 
\end{example} 

\begin{remark}  \label{rem:H1grad} \upshape 
Alternatively, one can carry out the calculation directly in the diffeomorphism group $\DiffM$ 
as follows. 
Let $t \to \eta(t)$ be a curve in $\DiffM$ with $\eta(0)=\eta$ and $\dot{\eta}(0)=v\circ\eta$ 
where $v = \nabla{f} \in T_e\DiffM$ 
and let $\rho = \Phi(\eta)$ where $\Phi: \DiffM \to \mathrm{Dens}(M)$ 
is the projection, see Figure \ref{H1fibration.pdf}. 
Consider a functional $H(\rho)$ lifted to $\DiffM$
as an invariant functional. 
We seek the $\dot{H}^1$-gradient of the functional $H(\eta)$ 
at $\eta\in \DiffM$ 
in the form $\nabla{g} \circ \eta$ for some function $g:M \to \mathbb{R}$. 
One has
\begin{align*} 
\llangle \mathrm{grad}_{\dot{H}^1}{H}, v\circ\eta \rrangle_{\dot{H}^1} 
&= 
\frac{d}{d t}\bigg\vert_{t=0} 
\int_M h\big( \Jac_\mu^{1/2}\eta(t)  \big) d\mu     \\ 
&= 
\frac{1}{2} \int_M 
h' \big( \Jac_\mu^{1/2}\eta \big) \Jac_\mu^{-1/2}\eta \, \mathrm{div}\, v\circ\eta \, \Jac_\mu\eta \, d\mu \,, 
\end{align*}
where 
\begin{equation} \label{eq:DTJ} 
\tfrac{\partial}{\partial t}\big\vert_{t=0} \Jac_\mu \eta(t) 
= 
(\mathrm{div}\, v \circ\eta) \cdot \Jac_\mu\eta. 
\end{equation} 
After changing variables in the integrand, using \eqref{multiHSmetric}, 
and projecting with the help of $\Phi$ we obtain
$$
\Phi_{\ast\eta} (\mathrm{grad}_{\dot{H}^1}{H}) 
= 
\frac{1}{2} \mathrm{div}\, (\mathrm{grad}_{\dot{H}^1}{H}\circ\eta^{-1})\circ\eta \cdot \Jac_\mu^{1/2}\eta 
= 
h'(\rho) 
$$ 
which coincides with the formula obtained above directly on $\VolM$.
\end{remark}


\section{Probability and infinite-dimensional geometric statistics}
\label{statistics}
\nequation

\subsection{Spherical Hellinger distance} 

The Riemannian distance function $\dist_{\dot{H}^1}$ on the space of densities $\VolM$ 
introduced in Theorem \ref{distance} is very closely related to 
the Hellinger distance in probability and statistics. 

Recall that given two probability measures $\lambda$ and $\nu$ on $M$ 
that are absolutely continuous with respect to a reference probability $\mu$ 
the {\it Hellinger distance} between $\lambda$ and $\nu$ is defined as 
$$
\dist^2_{Hel}(\lambda, \nu) 
= 
\int_M \Bigg( \sqrt{\frac{d\lambda}{d\mu}} 
- 
\sqrt{\frac{d\nu}{d\mu}} ~\Bigg)^2 d\mu \,. 
$$
As in the case of $\dist_{\dot{H}^1}$ one checks that 
$\dist_{Hel}(\lambda, \nu)=\sqrt{2}$ 
when $\lambda$ and $\nu$ are mutually singular
and that 
$\dist_H(\lambda, \nu)=0$ 
when the two measures coincide. 
It can also be expressed by the formula 
$$
\dist_{Hel}(\lambda, \nu) = \sqrt{ 2\big( 1 - BC(\lambda, \nu) \big) }
$$
where 
$BC(\lambda, \nu)$ is the so-called Bhattacharyya coefficient (affinity) 
used to measure the  ``overlap'' between statistical samples,
see e.g., \cite{chentsov} for more details. 

In order to compare the Hellinger distance $\dist_{Hel}$ with 
the Riemannian distance $\dist_{\dot{H}^1}$ defined in \eqref{dist_on_sphere} 
recall that probability 
measures $\lambda$ and $\nu$ are normalized
by the condition $\lambda(M)=\nu(M)=\mu(M)=1$.
As before, we shall consider the square roots of 
the respective Radon-Nikodym derivatives as points on the (unit) sphere in $L^2(M, d\mu)$. 
One can immediately verify the following two corollaries of Theorem \ref{isometricHSsphere}. 

\begin{corollary}
The Hellinger distance $\dist_{Hel}(\lambda, \nu)$ 
between the normalized densities $d\lambda=f^2 d\mu$ and $d\nu=g^2 d\mu$
is equal to the distance in $L^2(M, d\mu)$ 
between the points on the unit sphere $f,g \in S^\infty_1 \subset L^2(M,d\mu)$. 
\end{corollary}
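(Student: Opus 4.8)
The plan is to unwind the definitions directly, using the normalization that makes the ambient sphere the unit sphere. Since $\lambda$ and $\nu$ are probability measures, the total volume is $\mu(M)=1$, so the radius appearing in Theorem \ref{isometricHSsphere} is $r=\sqrt{\mu(M)}=1$ and the relevant sphere is $S^\infty_1$. The functions $f=\sqrt{d\lambda/d\mu}$ and $g=\sqrt{d\nu/d\mu}$ are, by that theorem, precisely the images under $\Phi$ of diffeomorphisms pushing $\mu$ forward to $\lambda$ and $\nu$; in particular each is a nonnegative function lying on $S^\infty_1$.

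First I would record the pointwise identities $\sqrt{d\lambda/d\mu}=f$ and $\sqrt{d\nu/d\mu}=g$, which hold because $f$ and $g$ are by construction the \emph{nonnegative} square roots of the Radon--Nikodym derivatives. Substituting these into the definition of the Hellinger distance and expanding the square then gives
$$
\dist^2_{Hel}(\lambda,\nu)
=
\int_M\big(f-g\big)^2\,d\mu
=
\|f-g\|^2_{L^2(M,d\mu)},
$$
which is exactly the squared straight-line (chordal) distance in $L^2(M,d\mu)$ between the two points $f,g\in S^\infty_1$. Taking square roots yields the claim.

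There is essentially no obstacle here: the entire content is the substitution $\sqrt{d\lambda/d\mu}=f$ followed by expansion of $(f-g)^2$. The only point meriting a word of care is the sign convention---one must use that $f$ and $g$ denote the positive square roots, so that the integrand is $(f-g)^2$ rather than, say, $(|f|-|g|)^2$; this is automatic from the construction of $\Phi$ in Theorem \ref{isometricHSsphere}. I would also note in passing that this chordal distance is genuinely distinct from the great-circle distance $\dist_{\dot H^1}$ of Theorem \ref{distance}, the latter being the $\arccos$ of the Bhattacharyya coefficient; the present corollary simply identifies the Hellinger distance with the (shorter-to-compute) chord subtending the same arc.
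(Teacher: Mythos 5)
Your proof is correct and matches the paper's intent exactly: the paper offers no separate argument, stating only that the corollary is "immediately verified" from Theorem \ref{isometricHSsphere}, and the verification is precisely your substitution of $f=\sqrt{d\lambda/d\mu}$, $g=\sqrt{d\nu/d\mu}$ into the definition of $\dist_{Hel}$ to obtain $\|f-g\|_{L^2(M,d\mu)}$. Your closing remark distinguishing this chordal distance from the great-circle distance $\dist_{\dot H^1}$ is also consistent with the paper's subsequent discussion of $\dist_{Hel}(\lambda,\nu)=2\sin(\alpha/2)$ versus $\dist_{\dot H^1}(\lambda,\nu)=\alpha$.
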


\begin{corollary}
The Bhattacharyya coefficient $BC(\lambda, \nu)$ 
for two normalized densities 
$d\lambda=f^2 d\mu$ and $d\nu = g^2 d\lambda$ 
is equal to the inner product of the corresponding positive
functions $f$ and $g$ in $L^2(M, d\mu)$ 
$$
BC(\lambda, \nu) 
= 
\int_M \sqrt{ \frac{d\lambda}{d\mu} \, \frac{d\nu}{d\mu}} \, d\mu 
= 
\int_M f g \, d\mu. 
$$
\end{corollary}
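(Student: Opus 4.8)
The plan is to reduce the claim to a direct expansion of the definition of the Hellinger distance together with the normalization of the two probability measures. Writing $f = \sqrt{d\lambda/d\mu}$ and $g = \sqrt{d\nu/d\mu}$, Theorem \ref{isometricHSsphere} (with $r = \sqrt{\mu(M)} = 1$ in the probability case) identifies $f$ and $g$ with points on the unit sphere $S^\infty_1 \subset L^2(M, d\mu)$, and the preceding corollary already records that $\dist_{Hel}(\lambda,\nu)$ equals the $L^2$-distance $\|f - g\|_{L^2}$. So the only task is to extract the inner product $\int_M fg\, d\mu$ from this chordal distance.

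First I would expand the square in the defining integral,
$$
\dist_{Hel}^2(\lambda,\nu) = \int_M (f - g)^2\, d\mu = \int_M f^2\, d\mu - 2\int_M fg\, d\mu + \int_M g^2\, d\mu,
$$
and then use that $\lambda$ and $\nu$ are probability measures: since $\int_M f^2\, d\mu = \int_M \tfrac{d\lambda}{d\mu}\, d\mu = \lambda(M) = 1$ and likewise $\int_M g^2\, d\mu = \nu(M) = 1$, the two diagonal terms collapse to constants, giving
$$
\dist_{Hel}^2(\lambda,\nu) = 2 - 2\int_M fg\, d\mu = 2\Bigl(1 - \int_M \sqrt{\tfrac{d\lambda}{d\mu}\,\tfrac{d\nu}{d\mu}}\, d\mu\Bigr).
$$
Finally I would compare this with the defining relation $\dist_{Hel}(\lambda,\nu) = \sqrt{2\,(1 - BC(\lambda,\nu))}$ recorded above and read off $BC(\lambda,\nu) = \int_M fg\, d\mu$, which is exactly the asserted identity.

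There is essentially no obstacle here: the entire content is the observation that, once the measures are normalized to unit mass, the quadratic terms in the expanded Hellinger integral become independent of $\lambda$ and $\nu$, so all the dependence sits in the cross term. The one point worth a word of care is the role of the normalization --- for measures of common total volume $\mu(M) \neq 1$ the same computation goes through with $\int_M f^2\, d\mu = \int_M g^2\, d\mu = \mu(M)$, and the Bhattacharyya coefficient is then the inner product on the sphere of radius $r = \sqrt{\mu(M)}$ rather than on the unit sphere, consistent with the rescaling already noted after Corollary \ref{cor:sphere}.
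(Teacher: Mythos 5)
Your proof is correct and is exactly the short computation the paper leaves to the reader (the corollary is stated as immediately verifiable, with no written proof): expand $\|f-g\|_{L^2}^2$, use the normalization $\int_M f^2\,d\mu=\int_M g^2\,d\mu=1$, and compare with the relation $\dist_{Hel}=\sqrt{2(1-BC)}$. Your closing remark about the non-normalized case $\mu(M)\neq 1$ is also consistent with the paper's conventions.
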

 
Let $0 < \alpha < \pi/2$ denote the angle between $f$ and $g$ viewed as unit vectors 
in $L^2(M, d\mu)$. 
Then we have 
$$ 
\dist_{Hel}(\lambda, \nu) = 2 \sin(\alpha/2)
\quad 
\text{and} 
\quad 
BC(\lambda, \nu) = \cos{\alpha}\,, 
$$ 
while 
$$ 
\dist_{\dot{H}^1}(\lambda, \nu) = \alpha = \arccos{BC(\lambda, \nu)}. 
$$

\begin{figure}
\begin{center}
\bigskip
\bigskip
 \begin{overpic}[width=.6\textwidth]{HellingerDistance.pdf}
      \put(46,54){\large $\alpha$}
      \put(45.5,43.5){\large $0$}
      \put(-8,78){\large $f = \Phi(\lambda)$}
      \put(79,80){\large $g = \Phi(\nu)$}
      \put(38,84){\large $\Phi(e) = 1$}
      \put(34,74){\large $\dist_{Hel}(\lambda, \nu)$}
      \put(36, 99){\large $\dist_{\dot{H}^1}(\lambda, \nu)$}
      \put(65, 50){\large $r = 1$}
      \put(81, 14){\large $S_1^\infty \subset L^2(M, d\mu)$}
    \end{overpic}
     \begin{figuretext}\label{HellingerDistance.pdf}
        The Hellinger distance $\dist_{Hel}(\lambda, \nu)$ and the spherical Hellinger distance $\dist_{\dot{H}^1}(\lambda, \nu)$ between two points $f = \Phi(\lambda)$ and $g = \Phi(\nu)$ in $S^\infty_1$. The thick arc represents the image of $\DiffM$ under the map $\Phi$.
     \end{figuretext}
     \end{center}
\end{figure}

Thus, we can refer to the Riemannian distance $\dist_{\dot{H}^1}(\lambda, \nu)$ 
on $\Vol(M)$ as the \textit{spherical Hellinger distance} 
between $\lambda$ and $\nu$.


\subsection{Fisher-Rao information in infinite-dimensions} 
\label{sub:H1GS}

It is remarkable that 
the right-invariant $\dot{H}^1$ metric introduced in formulas 
\eqref{diverdiv} and \eqref{multiHSmetric} 
provides a convenient framework 
for an \textit{infinite-dimensional} Riemannian geometric approach to mathematical statistics. 
Efforts directed toward finding suitable differential geometric approaches to statistics 
have a long history going back to the work of 
Fisher, Rao \cite{rao} and Kolmogorov 
and continued with 
Chentsov \cite{chentsov}, Efron \cite{efron} and Amari-Nagaoka \cite{AN}. 

In the classical approach one considers 
finite-dimensional families of probability distributions on $M$ whose elements 
are parameterized by subsets $E$ of the Euclidean space $\mathbb{R}^k$,
$$
\mathcal{S} 
= 
\left\{ 
\nu=\nu_{s_1, \dots , s_k} \in \mathcal{M}:  (s_1, \dots, s_k) \in E \subset \mathbb{R}^k 
\right\}\, .
$$
When equipped with a structure of a smooth $k$-dimensional manifold such a family 
is referred to as a statistical model. 
Rao \cite{rao} showed that any $\mathcal{S}$ carries 
a natural structure given by a $k\times k$ positive definite matrix 
\begin{equation} \label{fisherrao} 
I_{ij} 
= 
\int_M \frac{\partial \log{\nu}}{\partial s_i} \frac{\partial \log{\nu}}{\partial s_j} \nu \, d\mu 
\qquad 
(i, j = 1, \dots , k) \,,
\end{equation} 
called the Fisher-Rao (information) metric.\footnote{The significance of this metric for statistics 
was also pointed out by Chentsov~\cite{chentsov}.} 

\smallskip 
In our approach we shall regard a statistical model $\mathcal{S}$ 
as a \textit{$k$-dimensional Riemannian submanifold} of 
the \textit{infinite-dimensional Riemannian manifold} of probability densities $\VolM$ 
defined on the underlying $n$-dimensional compact manifold $M$. 
The following theorem shows that the Fisher-Rao metric \eqref{fisherrao}  
is (up to a constant multiple) the metric induced on the submanifold $\mathcal{S}\subset \VolM$ 
by the (degenerate) right-invariant Sobolev $\dot H^1$-metric \eqref{diverdiv} 
we introduced originally on the full diffeomorphism group $\DiffM$. 

\begin{theorem} \label{GStat} 
The right-invariant Sobolev $\dot{H}^1$-metric \eqref{multiHSmetric} 
on the quotient space $\Vol(M)$ of probability densities on $M$ 
coincides with the Fisher-Rao metric on any $k$-dimensional statistical submanifold of $\VolM$. 
\end{theorem}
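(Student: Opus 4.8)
The plan is to reduce everything to the spherical picture supplied by Theorem~\ref{isometricHSsphere} and then to recognize the Fisher--Rao expression \eqref{fisherrao} as the pullback of the ambient $L^2$ inner product under the square-root map. Since we are dealing with probability densities we have $\mu(M)=1$, so $\Phi$ identifies $\VolM$ isometrically with an open subset of the \emph{unit} sphere $S^\infty_1\subset L^2(M,d\mu)$, a density $\rho=d\nu/d\mu$ corresponding to the function $f=\sqrt{\rho}$. A statistical model $\spe=\{\nu_{s_1,\dots,s_k}\}$ then becomes a genuine $k$-dimensional submanifold $s\mapsto f_s=\sqrt{\rho_s}$ of $S^\infty_1$, and because $\Phi$ is an isometry it suffices to compute the induced $L^2$ metric on this submanifold of the sphere.

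First I would differentiate the square-root parametrization. Writing $\partial_i=\partial/\partial s_i$ and using $\partial_i\rho_s = \rho_s\,\partial_i\log\rho_s$ one gets the tangent vectors
$$
\partial_i f_s = \frac{1}{2\sqrt{\rho_s}}\,\partial_i\rho_s = \tfrac12\sqrt{\rho_s}\;\partial_i\log\rho_s .
$$
Before proceeding I would record the consistency check that these are genuinely tangent to the sphere (and to $\VolM$): since $\int_M\rho_s\,d\mu=1$ for all $s$, differentiation gives $\int_M\partial_i\rho_s\,d\mu=0$, whence $\langle f_s,\partial_i f_s\rangle_{L^2}=\tfrac12\int_M\partial_i\rho_s\,d\mu=0$, so $\partial_i f_s$ is orthogonal to the radial direction, exactly as it must be for a curve staying on $S^\infty_1$.

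Next I would take the $L^2$ inner product of two such tangent vectors,
$$
\langle \partial_i f_s,\partial_j f_s\rangle_{L^2}
= \int_M \tfrac12\sqrt{\rho_s}\,\partial_i\log\rho_s\cdot \tfrac12\sqrt{\rho_s}\,\partial_j\log\rho_s\,d\mu
= \tfrac14\int_M \partial_i\log\rho_s\,\partial_j\log\rho_s\;\rho_s\,d\mu .
$$
Since $\mu$ is a fixed reference density, $\partial_i\log\rho_s=\partial_i\log\nu$, and the last integral is precisely the Fisher--Rao matrix $I_{ij}$ of \eqref{fisherrao}. Because $\Phi$ carries the metric \eqref{multiHSmetric} on $\VolM$ onto the induced $L^2$ metric of $S^\infty_1$, the left-hand side is the $\dot H^1$ inner product of the coordinate fields $\partial_i,\partial_j$ on $\spe\subset\VolM$. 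Thus the induced $\dot H^1$ metric on $\spe$ equals $\tfrac14 I_{ij}$, i.e.\ the Fisher--Rao metric up to the overall normalization constant built into \eqref{diverdiv}.

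The computation is short; the only point that needs care---and what I regard as the main (if modest) obstacle---is making the identification of tangent spaces precise. One must verify that an abstract tangent vector to $\spe$ at $\nu_s$, represented by the infinitesimal density variation $\partial_i\rho_s$, is carried by $\Phi_{*}$ to exactly the vector $\partial_i f_s$ computed above, and that the $\dot H^1$ pairing \eqref{multiHSmetric} on such variations (defined on the group via the relation $\diver u=\rho$) agrees term-by-term with the ambient $L^2$ pairing on the sphere. This is guaranteed in principle by Theorem~\ref{isometricHSsphere}, but spelling out the correspondence between the density-variation description of $T_\nu\VolM$ and the round-sphere description of $T_f S^\infty_1$ is where all the (routine) bookkeeping resides.
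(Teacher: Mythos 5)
Your argument is correct, and it reaches the same conclusion as the paper (the induced metric equals $\tfrac14 I_{ij}$, i.e.\ Fisher--Rao up to the normalization built into \eqref{diverdiv}), but by a genuinely different route. You work entirely ``downstairs'': you identify $\VolM$ with the unit sphere via $\Phi$, parametrize the statistical submanifold by $s\mapsto f_s=\sqrt{\rho_s}$, and pull back the ambient $L^2$ inner product under the square-root map, so the whole computation is the one-line identity $\partial_i f_s=\tfrac12\sqrt{\rho_s}\,\partial_i\log\rho_s$ followed by an $L^2$ pairing. The paper instead works ``upstairs'' in $\DiffM$: it takes a two-parameter family of diffeomorphisms $\eta(s_1,s_2)$, sets $\rho=\Jac_\mu\eta$, uses the derivative formula \eqref{eq:DTJ} for the Jacobian, and changes variables to land on $\int_M \diver v\cdot\diver w\,d\mu=4\llangle v,w\rrangle_{\dot H^1}$ directly, never invoking Theorem~\ref{isometricHSsphere}. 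Your version is shorter and more conceptual but, as you note yourself, it outsources the real content to the identification of $T_\nu\VolM$ (density variations) with $T_fS^\infty_1$ via $\Phi_*$ and to the fact that $\Phi$ is an isometry --- precisely the content of Theorem~\ref{isometricHSsphere} together with the relation $\Phi_{*e}(u)=\tfrac12\diver u$; granting that, your consistency check $\langle f_s,\partial_i f_s\rangle_{L^2}=0$ and the tangent-space bookkeeping are exactly the right things to verify. The paper's Lagrangian computation buys independence from the sphere theorem (and extends verbatim to tangent vectors $v,w$ that are arbitrary vector fields rather than coordinate fields of a parametrized family), while yours makes transparent \emph{why} the result holds: the Fisher--Rao metric is nothing but the round metric seen through the square-root embedding.
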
 
\begin{proof} 
We carry out the calculations directly in $\DiffM$. 
Given any $v$ and $w$ in $T_e\DiffM$ consider a two-parameter family of diffeomorphisms 
$(s_1, s_2) \to \eta(s_1,s_2)$ in $\DiffM$ 
starting from the identity $\eta(0,0)=e$ with 
$\frac{\partial}{\partial s_1}\eta(0,0)=v$, $\tfrac{\partial}{\partial s_2}\eta(0,0)=w$ 
and let 
$$
v(s_1,s_2) \circ \eta(s_1,s_2) = \tfrac{\partial}{\partial s_1} \eta(s_1,s_2) 
\quad\mathrm{and}\quad  
w(s_1,s_2) \circ \eta(s_1,s_2) = \tfrac{\partial}{\partial s_2} \eta(s_1,s_2)
$$ 
be the corresponding variation vector fields along $\eta(t,s)$. 

If $\rho$ is the Jacobian of $\eta(s_1,s_2)$ computed with respect to 
the fixed measure $\mu$ then \eqref{fisherrao} takes the form 
$$
I_{vw} 
= 
\int_M
\frac{\partial}{\partial s_1} \Big( \log{\Jac_\mu\eta(s_1,s_2)} \Big) 
\frac{\partial}{\partial s_2} \Big( \log{\Jac_\mu\eta(s_1,s_2)} \Big) 
\Jac_\mu \eta(s_1,s_2) \, d\mu. 
$$
Recall from \eqref{eq:DTJ} that 
$$
\frac{\partial}{\partial s_1} \Jac_\mu\eta(s_1,s_2) 
= 
\mathrm{div}\, v(s_1,s_2) \circ \eta(s_1,s_2) \cdot \Jac_\mu \eta(s_1,s_2)
$$
and similarly for the partial derivative in $s_2$. 
Using these and changing variables in the integral we now find 
\begin{align*} 
I_{vw} 
&=
\int_M 
\frac{\tfrac{\partial}{\partial s_1}\Jac_\mu\eta(s_1,s_2) \tfrac{\partial}{\partial s_2}\Jac_\mu\eta(s_1,s_2)}
{\Jac_\mu\eta(s_1,s_2)} \big|_{s_1=s_2=0} 
\, d\mu       \\ 
&=
\int_M
\big( \mathrm{div}\, v \circ \eta \big) 
\cdot 
\big( \mathrm{div} \, w \circ \eta \big) \, \Jac_\mu \eta 
\, d\mu   \\ 
&= 
\int_{M} \mathrm{div}\, v  \cdot \mathrm{div}\, w \, d\mu    
=
4 \llangle v, w \rrangle_{\dot{H}^1}\,,
\end{align*} 
from which the theorem follows. 
\end{proof} 

\begin{remark} \upshape 
Theorem \ref{GStat} suggests that the $\dot{H}^1$ counterpart of optimal transport 
with its associated spherical Hellinger distance 
is the infinite-dimensional version of geometric statistics sought in \cite{AN} and \cite{chentsov}. 
%
\end{remark}


\subsection{Affine connections and duality on $\Diff(S^1)/\Rot(S^1)$} 
\label{sub:affine} 

One of the questions posed by Amari and Nagaoka asked for an infinite-dimensional theory 
of so-called dual connections, see e.g. \cite{AN}, Section 8.4. 
Dual connections arise naturally in classical affine geometry and 
have turned out to be particularly useful in geometric statistics. 

In this section we describe a family of such connections $\nabla^{(\alpha)}$ 
on the density space $\VolM$ in the case when $M=S^1$ 
which generalize the $\alpha$-connections of Chentsov \cite{chentsov}. 
We will show that $\nabla^{(\alpha)}$ and $\nabla^{(-\alpha)}$ are dual 
with respect to the right-invariant $\dot{H}^1$-metric \eqref{multiHSmetric} 
and derive the (reduced) geodesic equations for $\nabla^{(\alpha)}$. 
When $\alpha=0$ we recover, as expected, 
the one-dimensional Hunter-Saxton equation \eqref{HS}. 
However, in the case $\alpha=-1$ we obtain another 
completely integrable system called the $\mu$-Burgers equation, see \cite{lmt}. 
The general case of an arbitrary compact manifold $M$ will be described in \cite{KLMP}. 

We refer to \cite{AN} for basic facts about dual connections 
on finite-dimensional statistical models 
and to \cite{K-M} and \cite{L2} for details on the $\dot{H}^1$ geometry of 
the homogeneous space of densities $\Diff(S^1)/\Rot(S^1)$. 
It will be convenient to identify the latter with the set of circle diffeomorphisms 
which fix a prescribed point 
$$
\Vol(S^1) \simeq \left\{ \eta \in \Diff(S^1): \eta(0)=0 \right\}. 
$$ 
Denote $A=-\partial_x^2$ and given a smooth mean-zero periodic function $u$ 
define the operator $A^{-1}$ by 
$$
A^{-1} u(x) = - \int_0^x \int_0^y u(z) \, dz dy + x\int_0^1\int_0^y u(z) \, dz dy.
$$

Let $v$ and $w$ be smooth mean-zero functions on the circle and denote by 
$V =v\circ\eta$ and $W =w\circ\eta$ the corresponding vector fields on $\Vol(S^1)$. 
For any $\alpha \in \mathbb{R}$ we define 
\begin{equation} \label{alfa-con} 
\eta \to ( \nabla^{(\alpha)}_VW ) (\eta) 
= 
\left( w_x v + \Gamma^{(\alpha)}_e(v,w) \right) \circ\eta 
\end{equation} 
where 
\begin{equation} \label{alfa-ch} 
\Gamma^{(\alpha)}_e (v,w) = \frac{1+\alpha}{2} A^{-1} \partial_x (v_x w_x). 
\end{equation}

\begin{proposition} \label{prop:alfa} 
For each $\alpha \in \mathbb{R}$ the map $\nabla^{(\alpha)}$ 
is a right-invariant torsion-free affine connection on $\Vol(S^1)$ 
with Christoffel symbols $\Gamma^{(\alpha)}$. 
$\nabla^{(0)}$ is the Levi-Civita connection of the $\dot{H}^1$-metric \eqref{multiHSmetric}, 
while $\nabla^{(-1)}$ is flat. 
\end{proposition}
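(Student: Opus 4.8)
The plan is to recognize $\nabla^{(\alpha)}$ as a tensorial correction of the \emph{flat} affine connection that $\Diff(S^1)$ inherits from its ambient linear structure. Writing a diffeomorphism (via its lift) as $\eta = \id + f$ with $f$ a function on $S^1$ identifies a neighbourhood in $\Diff(S^1)$ with an open subset of a vector space, and the resulting trivial connection $D$ acts on right-invariant fields $V = v\circ\eta$, $W = w\circ\eta$ by $D_V W = (w_x v)\circ\eta$, since differentiating $w\circ\eta$ along $V$ produces exactly the chain-rule term $w_x v$. With this in hand the defining formula reads $\nabla^{(\alpha)} = D + \Gamma^{(\alpha)}$, where $\Gamma^{(\alpha)}$ is the right-invariant extension of $\Gamma^{(\alpha)}_e(v,w) = \tfrac{1+\alpha}{2}A^{-1}\partial_x(v_x w_x)$. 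First I would check that $\Gamma^{(\alpha)}$ is genuinely a $(1,2)$-tensor: its value at $\eta$ depends only on $V(\eta),W(\eta)$ (from which $v,w$ are recovered by right translation) and it is $\mathbb{R}$-bilinear, hence $C^\infty$-bilinear in both slots. Since $D$ is a connection and $\Gamma^{(\alpha)}$ a tensor, $\nabla^{(\alpha)} = D + \Gamma^{(\alpha)}$ is an affine connection; both summands are built by right translation of fixed algebra elements, so $\nabla^{(\alpha)}$ is right-invariant, and in the affine chart above its Christoffel symbols are precisely $\Gamma^{(\alpha)}$.

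Torsion-freeness is then immediate. Writing $T^{(\alpha)}(V,W) = \nabla^{(\alpha)}_V W - \nabla^{(\alpha)}_W V - [V,W]$ and splitting off $D$, the part $D_V W - D_W V - [V,W]$ vanishes because the trivial connection $D$ has zero torsion, while the remaining part $\Gamma^{(\alpha)}(V,W) - \Gamma^{(\alpha)}(W,V)$ vanishes because $\Gamma^{(\alpha)}_e$ is symmetric in its arguments (as $v_x w_x = w_x v_x$).

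For $\nabla^{(0)}$ I would invoke uniqueness of the Levi-Civita connection: having shown $\nabla^{(0)}$ is torsion-free, it remains to verify metric compatibility with \eqref{multiHSmetric}. By right-invariance of both the metric and the fields, the function $\eta\mapsto\llangle W,U\rrangle_\eta$ is constant, so $V\llangle W,U\rrangle = 0$, and it suffices to prove $\llangle\nabla^{(0)}_v w,u\rrangle + \llangle w,\nabla^{(0)}_v u\rrangle = 0$ at the identity for mean-zero $v,w,u$. The computation hinges on the identity $\partial_x A^{-1}\partial_x(v_x w_x) = -v_x w_x + c$, where $c=\int_{S^1}v_x w_x\,dx$ is the constant forced by periodicity; this constant is annihilated against $\int_{S^1}u_x\,dx = 0$. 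Substituting into $(\nabla^{(0)}_v w)_x$, pairing, and integrating by parts once, all terms cancel \emph{exactly} because of the coefficient $\tfrac12$ at $\alpha=0$, giving metric compatibility and hence $\nabla^{(0)} = $ Levi-Civita. Alternatively, since a torsion-free connection is determined by its geodesics, it suffices to note that $\nabla^{(0)}_V V = 0$ reduces, upon applying $A$, to the Hunter-Saxton equation \eqref{HS}, which is the known geodesic equation of \eqref{multiHSmetric}.

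Finally, flatness of $\nabla^{(-1)}$ is the easiest case: at $\alpha=-1$ the coefficient $\tfrac{1+\alpha}{2}$ vanishes, so $\Gamma^{(-1)} = 0$ and $\nabla^{(-1)} = D$ is exactly the trivial affine connection, whose curvature vanishes identically as the restriction of the flat connection of a vector space to an open set (and further to the linear subspace cut out by the normalization). I expect the only real work to lie in the metric-compatibility integration by parts, where one must pin down the action of the nonlocal operator $A^{-1}$ and track the mean-zero, modulo-constants bookkeeping carefully; the remaining steps are structural once $\nabla^{(\alpha)}$ is written as flat connection plus a symmetric tensor.
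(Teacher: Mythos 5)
Your proof is correct and follows essentially the same route as the paper, which simply declares the first and last assertions ``routine'' and cites \cite{L2} for the Levi-Civita claim: your decomposition into the trivial connection plus the symmetric right-invariant tensor $\Gamma^{(\alpha)}$, the identity $\partial_x A^{-1}\partial_x(v_xw_x)=-v_xw_x+\int_{S^1}v_xw_x\,dx$, and the resulting cancellation in the metric-compatibility check are exactly the verifications being omitted (indeed, your compatibility computation is the $\alpha=0$ case of the paper's own duality calculation in Proposition \ref{prop:duality}, and your geodesic-spray alternative matches Proposition \ref{EulerArnold-alfa}). The observation that $\Gamma^{(-1)}=0$ reduces $\nabla^{(-1)}$ to the flat ambient connection is likewise the intended ``routine'' argument.
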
 
\begin{proof} 
The first and last assertions are verified in a routine manner using 
\eqref{alfa-con} and \eqref{alfa-ch}. 
That $\Gamma^{(0)}$ is precisely the Christoffel symbol of 
the unique (weak) Riemannian $\dot{H}^1$ connection can be found e.g. in \cite{L2}. 
\end{proof} 

Following \cite{AN} we say that two connections $\nabla$ and $\nabla^\ast$ 
on $\Vol(S^1)$ are dual with respect to $\llangle \cdot, \cdot \rrangle$ if 
$$
U \llangle V, W \rrangle 
= 
\llangle \nabla_UV, W \rrangle + \llangle V, \nabla^\ast_UW \rrangle 
$$
for any smooth vector fields $U$, $V$ and $W$. 

\begin{proposition} \label{prop:duality} 
The affine connections $\nabla^{(\alpha)}$ and $\nabla^{(-\alpha)}$ on $\Vol(S^1)$ 
are dual with respect to the $\dot{H}^1$-metric \eqref{multiHSmetric} 
for any $\alpha \in \mathbb{R}$. 
\end{proposition}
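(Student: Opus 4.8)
The plan is to exploit the right-invariance of both the metric and the connections to collapse the duality identity to a single algebraic symmetry computation at the identity coset. The first step is to observe that the duality defect
$$
C(U,V,W) := U\llangle V,W\rrangle - \llangle \nabla^{(\alpha)}_U V, W\rrangle - \llangle V, \nabla^{(-\alpha)}_U W\rrangle
$$
is in fact a $(0,3)$-tensor. Tensoriality in $U$ is immediate from the Leibniz rule of an affine connection. Tensoriality in $V$ (and, by the symmetric role played by $\nabla^{(-\alpha)}$, in $W$) follows by replacing $V\mapsto fV$: the derivation terms $(Uf)\llangle V,W\rrangle$ arising from $U\llangle fV,W\rrangle$ and from $\llangle\nabla^{(\alpha)}_U(fV),W\rrangle$ cancel, leaving $C(U,fV,W)=fC(U,V,W)$. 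Since right-invariant vector fields span each tangent space, it therefore suffices to verify $C\equiv 0$ on right-invariant fields $U=u\circ\eta$, $V=v\circ\eta$, $W=w\circ\eta$ with $u,v,w$ fixed mean-zero functions, evaluated at $\eta=e$.

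For such right-invariant $V,W$ the quantity $\llangle V,W\rrangle=\tfrac14\int_{S^1} v_x w_x\,dx$ is constant on $\Vol(S^1)$, so $U\llangle V,W\rrangle=0$, and the claim reduces to
$$
\llangle \nabla^{(\alpha)}_U V, W\rrangle_e + \llangle V, \nabla^{(-\alpha)}_U W\rrangle_e = 0.
$$
Here I would split the connection using \eqref{alfa-con}--\eqref{alfa-ch} as $\nabla^{(\alpha)}_U V|_e = \nabla^{(0)}_U V|_e + \tfrac{\alpha}{2}A^{-1}\partial_x(u_x v_x)$, where $\nabla^{(0)}$ is the Levi-Civita connection of the $\dot H^1$-metric by Proposition \ref{prop:alfa}. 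Since $\nabla^{(0)}$ is metric-compatible and $U\llangle V,W\rrangle=0$, the two Levi-Civita contributions cancel, and the identity reduces to the vanishing of
$$
\tfrac{\alpha}{2}\Big( \llangle A^{-1}\partial_x(u_x v_x),\, w\rrangle - \llangle v,\, A^{-1}\partial_x(u_x w_x)\rrangle \Big).
$$

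The heart of the proof is then a symmetry computation. Setting $p=A^{-1}\partial_x(u_x v_x)$ and using $A=-\partial_x^2$ gives $-p_{xx}=\partial_x(u_x v_x)$, whence $p_x=-\bigl(u_x v_x-\overline{u_x v_x}\bigr)$, the integration constant being fixed by $\int_{S^1}p_x\,dx=0$ (valid since $p$ is periodic). Substituting into the $\dot H^1$-pairing and discarding the constant $\overline{u_x v_x}$ against the mean-zero $w_x$ yields
$$
\llangle A^{-1}\partial_x(u_x v_x),\, w\rrangle = -\tfrac14\int_{S^1} u_x v_x w_x\,dx,
$$
an expression manifestly symmetric in $u,v,w$. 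The companion pairing, obtained by interchanging $v$ and $w$, produces the identical value, so the bracketed difference vanishes for every $\alpha\in\mathbb R$, giving duality. I expect the only delicate point to be this symmetry step—precisely, checking that $A^{-1}\partial_x$ acts as $-(\mathrm{Id}-\text{mean})$ after one $x$-derivative and that the mean-value term is annihilated by the mean-zero field $w$; once the symmetric trilinear form is isolated, duality is automatic, and the tensoriality reduction together with the appeal to metric-compatibility of $\nabla^{(0)}$ are routine.
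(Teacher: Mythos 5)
Your proof is correct and follows essentially the same route as the paper's: reduce by right-invariance to a single identity at the identity coset and verify it using the explicit formula for $A^{-1}$ and integration by parts, the key point being that $\llangle A^{-1}\partial_x(u_xv_x),\,w\rrangle_{\dot H^1}=-\tfrac14\int_{S^1}u_xv_xw_x\,dx$ is symmetric in $u,v,w$. The only cosmetic differences are that you justify the reduction via tensoriality of the duality defect (where the paper calls this step routine) and that you split off the Levi-Civita part via Proposition \ref{prop:alfa} before isolating the $\alpha$-dependent trilinear form, whereas the paper carries the transport terms $v_xu$ and $w_xu$ through the integration by parts directly; both arguments land on the same cancellation.
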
 
\begin{proof} 
It suffices to work with right-invariant vector fields 
$U=u\circ\eta$, $V=v\circ\eta$ and $W=w\circ\eta$ 
where $u,v$ and $w$ are periodic mean-zero functions on the circle. 
By right-invariance we have $U \llangle V, W \rrangle_{\dot{H}^1} = 0$. 
On the other hand, from \eqref{alfa-con} and \eqref{multiHSmetric} 
for any $\alpha$ we obtain 
\begin{align*} 
&\llangle \nabla^{(\alpha)}_UV, W \rrangle_{\dot{H}^1} 
+ 
\llangle V, \nabla^{(-\alpha)}_UW \rrangle_{\dot{H}^1} 
=   \\ 
&= 
\frac{1}{4} \int_0^1 
\big( v_x u + \tfrac{1+\alpha}{2} A^{-1} (u_x v_x)_x \big)_x w_x \, dx 
+ 
\frac{1}{4} \int_0^1 
v_x \big( w_x u + \tfrac{1-\alpha}{2} A^{-1} (u_x w_x)_x \big)_x \, dx  \\ 
&= 
\frac{1}{4} \int_0^1 \Big( 
(v_xu)_x w_x - \tfrac{1+\alpha}{2} u_x v_x w_x + v_x (w_x u)_x - \tfrac{1-\alpha}{2} v_x u_x w_x 
\Big) dx 
= 0 
\end{align*} 
using integration by parts and the explicit formula for $A^{-1}$ 
The general case is reduced to the above in a routine way as in e.g. \cite{Ebin-Marsden}. 
\end{proof} 

From the formula \eqref{alfa-ch} we see that the Christoffel symbols $\Gamma^{(\alpha)}$ 
do not lose derivatives. In fact, with a little extra work it can be shown that this implies 
that $\nabla^{(\alpha)}$ is a smooth connection on the $H^s$ Sobolev completion of 
$\Vol(S^1)$ to a Sobolev-Hilbert manifold if $s>3/2$. 
Consequently, one establishes the existence and uniqueness in $H^s$ of 
local (in time) geodesics of $\nabla^{(\alpha)}$ using the methods of \cite{MP} or \cite{lmt}. 

We next derive the geodesic equations of $\nabla^{(\alpha)}$ 
(reduced to the tangent space at the identity) 
but will not pursue well-posedness questions here. 

\begin{proposition} \label{EulerArnold-alfa} 
The equation of geodesics of the affine $\alpha$-connection is $\nabla^{(\alpha)}$ is 
\begin{equation} \label{eq:alfa} 
u_{txx} + (2-\alpha) u_x u_{xx} + u u_{xxx} = 0.
\end{equation} 
The cases $\alpha=0$ and $\alpha=-1$ correspond to one-dimensional 
completely integrable systems: 
the HS equation \eqref{HS} and the $\mu$-Burgers equation, respectively. 
\end{proposition}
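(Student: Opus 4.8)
The plan is to derive \eqref{eq:alfa} by reducing the geodesic equation $\nabla^{(\alpha)}_{\dot\eta}\dot\eta = 0$ of the right-invariant connection to an evolution equation on the tangent space at the identity, exactly as one passes from $\frac{D}{dt}\dot\eta = 0$ to the Euler-Arnold equation \eqref{utBuu}. I would write a geodesic as a curve $t \mapsto \eta(t)$ in $\Vol(S^1)$ with $\dot\eta(t) = u(t)\circ\eta(t)$, where $u(t)$ is a time-dependent mean-zero periodic function. The key point is that the covariant acceleration splits into a genuinely time-dependent (Eulerian) part and a ``frozen'' right-invariant part: freezing $u$ at a time $t_0$ and letting $U = u(t_0)\circ\eta$ be the associated right-invariant field, one has $\dot\eta(t_0) = U(\eta(t_0))$ and $\frac{d}{dt}\big|_{t_0}[\dot\eta(t) - U(\eta(t))] = u_t\circ\eta$, while $\nabla^{(\alpha)}_U U$ is computed from \eqref{alfa-con} with $v = w = u$. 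Since the ``flat'' term $w_x v$ in \eqref{alfa-con} contributes $u_x u = uu_x$, this gives $\frac{D}{dt}\dot\eta = (u_t + uu_x + \Gamma^{(\alpha)}_e(u,u))\circ\eta$.

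Setting this to zero and composing with $\eta^{-1}$ yields the reduced (nonlocal) geodesic equation $u_t + uu_x + \frac{1+\alpha}{2}A^{-1}\partial_x(u_x^2) = 0$, using $\Gamma^{(\alpha)}_e(u,u) = \frac{1+\alpha}{2}A^{-1}\partial_x(u_x^2)$ from \eqref{alfa-ch}. To reach the stated local form \eqref{eq:alfa} I would apply $A = -\partial_x^2$ to this equation. The first term gives $-u_{txx}$; the identity $\partial_x^2(uu_x) = 3u_xu_{xx} + uu_{xxx}$ handles the transport term; and since $-\partial_x^2 A^{-1}$ is the identity on mean-zero functions, the nonlocal term collapses to $\frac{1+\alpha}{2}\partial_x(u_x^2) = (1+\alpha)u_xu_{xx}$. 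Collecting terms (and multiplying by $-1$) gives $u_{txx} + (3 - (1+\alpha))u_xu_{xx} + uu_{xxx} = 0$, i.e. exactly \eqref{eq:alfa}.

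Finally I would read off the two distinguished cases. For $\alpha = 0$ the coefficient is $2$ and \eqref{eq:alfa} is the Hunter-Saxton equation \eqref{HS}, consistent with $\nabla^{(0)}$ being the Levi-Civita connection of the $\dot H^1$-metric (Proposition \ref{prop:alfa}). For $\alpha = -1$ the Christoffel term $\Gamma^{(-1)}_e$ vanishes, the reduced equation is simply the inviscid Burgers equation $u_t + uu_x = 0$, and its $A$-transformed form $u_{txx} + 3u_xu_{xx} + uu_{xxx} = 0$ is the $\mu$-Burgers equation of \cite{lmt}; this is also consistent with $\nabla^{(-1)}$ being flat.

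The step I expect to require the most care is the reduction in the first paragraph: the connection \eqref{alfa-con} is prescribed only on right-invariant fields, so one must justify that it extends to a covariant derivative along curves and that the splitting of $\frac{D}{dt}\dot\eta$ into the Eulerian derivative $u_t$ plus $\nabla^{(\alpha)}_U U$ is legitimate in this infinite-dimensional (weak) setting. Because \eqref{alfa-ch} shows $\Gamma^{(\alpha)}$ loses no derivatives, this can be made rigorous on the $H^s$ completion for $s > 3/2$ by the methods of \cite{MP} and \cite{lmt}; one also uses mean-zeroness throughout to keep $A^{-1}$ well-defined.
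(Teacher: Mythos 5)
Your proposal is correct and follows essentially the same route as the paper: reduce the geodesic equation $\ddot\eta + \Gamma^{(\alpha)}_\eta(\dot\eta,\dot\eta)=0$ via $\dot\eta = u\circ\eta$ to $u_t + uu_x + \Gamma^{(\alpha)}_e(u,u)=0$, then apply $-\partial_x^2$ and use $-\partial_x^2 A^{-1}=\mathrm{id}$ on mean-zero functions to obtain \eqref{eq:alfa}. Your extra care about the well-definedness of the covariant derivative along curves and the sign bookkeeping $3-(1+\alpha)=2-\alpha$ only fills in details the paper leaves implicit.
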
 
\begin{proof} 
The equation for geodesics of $\nabla^{(\alpha)}$ on $\Vol(S^1)$ reads 
$$
\ddot{\eta} + \Gamma^{(\alpha)}_\eta (\dot\eta, \dot\eta ) = 0
$$
where $\Gamma^{(\alpha)}_\eta$ is the right-translation of $\Gamma^{(\alpha)}_e$. 
Substituting $\dot{\eta} = u\circ\eta$ gives 
$$
u_t + u u_x + \Gamma^{(\alpha)}_e(u,u) = 0 
$$
and using \eqref{alfa-ch} and differentiating both sides of the equation twice 
in the $x$ variable completes the proof. 
\end{proof} 

\begin{remark} \upshape 
Dual connections of Amari have not yet been fully explored in infinite dimensions. 
We add here that as in finite dimensions \cite{AN} there is a simple relation between 
the curvature tensors of $\nabla^{(\alpha)}$ i.e.  
$ R^{(\alpha)} = (1 - \alpha^2)R^{(0)}$ 
where $R^{(0)}$ is the curvature of the round metric on $\Vol(S^1)$.
It follows that the dual connections $\nabla^{(-1)}$ and $\nabla^{(1)}$ are flat 
and in particular there is a chart on $\Vol(S^1)$ in which the geodesics of the latter 
are straight lines. 
This permits to write down smooth solutions of 
the Cauchy problem for \eqref{eq:alfa} in the case $\alpha =1$, i.e. 
$$
u_{txx} + u_x u_{xx} + u u_{xxx} = 0,   
\quad 
u(0,x) = u_0(x) 
$$ 
(with $u_0(0) = 0$) in the explicit form 
$$
u(t,x) 
= 
\left( \int_0^1 e^{tu_{0x}(y)}  dy \right)^{-1} 
\int_0^{\eta_t^{-1}(x)} u_{0x}(y) e^{tu_{0x}(y)} dy  
$$
where 
$\eta_t(x) \equiv \eta(t,x) = \int_0^x e^{tu_{0x}(y)} dy / \int_0^1 e^{tu_{0x}(y)} dy$ 
is the flow of the solution $u$. 
\end{remark}


\section{The geodesic equation: solutions and integrability} 
\label{geodesics}
\nequation

In the preceding sections we studied the geometry of 
the $\dot{H}^1$-metric \eqref{multiHSmetric} on the space of densities $\VolM$. 
In this section we shall focus on obtaining explicit formulas for solutions of 
the Cauchy problem for the associated Euler-Arnold equation 
and prove that they necessarily break down in finite time. 
Furthermore, we will show how to view the Euler-Arnold equation 
as a \textit{completely integrable} Hamiltonian system in any space dimension.


\subsection{Classical solutions of the $\dot H^1$ Euler-Arnold equation} 
The formalism of Section \ref{subsec:EA} applied to the $\dot{H}^1$-metric 
on the group $\DiffM$ yields the following equation. 

\begin{proposition} \label{generalizedhuntersaxtonprop}
The Euler-Arnold equation of the right-invariant $\dot{H}^1$-metric 
\eqref{multiHSmetric} on the homogeneous space $\VolM$ reads 
\begin{equation}\label{generalizedhuntersaxton} 
\rho_t + u \cdot \nabla \rho 
+ 
\tfrac{1}{2} \rho^2 
=
-\frac{\int_M \rho^2 \, d\mu}{2\mu(M)}  \,,
\qquad\text{ where}\qquad 
\rho=\diver u\,.
\end{equation}
\end{proposition}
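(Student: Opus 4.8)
The plan is to derive \eqref{generalizedhuntersaxton} directly from the abstract Euler-Arnold equation \eqref{utBuu}, that is $u_t = -B(u,u)$ with $B$ defined by \eqref{opB}, and then take the divergence to pass to the genuine unknown $\rho = \diver u$. Because the metric \eqref{multiHSmetric} depends on a vector field only through its divergence, I expect the entire computation to close at the level of $\rho$, with the role of the quotient $\VolM$ being to restrict the admissible test directions to mean-zero functions.

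First I would pair $B(u,u)$ against an arbitrary right-invariant field $w$, writing $\sigma := \diver w$. Using $\ad_u w = -[u,w]$ together with \eqref{opB},
$$
\llangle B(u,u), w\rrangle = \llangle u, \ad_u w\rrangle = -\tfrac14\int_M \rho\,\diver[u,w]\,d\mu .
$$
The key computational input is the identity $\diver[u,w] = u\cdot\grad\sigma - w\cdot\grad\rho$, which follows immediately from $\Lie_{[u,w]}\mu = [\Lie_u,\Lie_w]\mu$ and $\Lie_X\mu = (\diver X)\mu$. I would then integrate by parts on the closed manifold $M$ to move all derivatives off $\sigma$, rewriting the pairing as $\tfrac14\int_M \sigma\,\bigl(u\cdot\grad\rho + \tfrac12\rho^2\bigr)\,d\mu$; the coefficient $\tfrac12$ appears from combining the $\rho^2$ produced by integrating $u\cdot\grad\sigma$ by parts against the contribution of the $w\cdot\grad\rho$ term.

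Since the resulting identity holds for every $w$, i.e.\ for every mean-zero $\sigma$, and since $\diver B(u,u)$ is itself a divergence and hence has zero mean, I conclude that
$$
\diver B(u,u) = u\cdot\grad\rho + \tfrac12\rho^2 + C,
$$
where the constant $C$ is forced by the requirement that the right-hand side integrate to zero. Evaluating this using $\int_M u\cdot\grad\rho\,d\mu = -\int_M \rho^2\,d\mu$ gives $C = \tfrac{1}{2\mu(M)}\int_M\rho^2\,d\mu$, and taking the divergence of $u_t = -B(u,u)$ then reproduces \eqref{generalizedhuntersaxton} exactly.

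The main obstacle I anticipate is conceptual rather than computational: the metric is degenerate on $\DiffM$, so one cannot invert it to read off $B(u,u)$ as an honest vector field, and the naive Euler-Arnold recipe must be interpreted on the quotient. The resolution is precisely to test only against fields $w$ (equivalently mean-zero $\sigma$) and to exploit that every object in sight is a divergence; this is exactly what forces the nonlocal constant term $-\tfrac{1}{2\mu(M)}\int_M\rho^2\,d\mu$ onto the right-hand side. As an independent consistency check one can verify the same equation through the isometry $\Phi$ of Theorem \ref{isometricHSsphere}: great circles $f(t)$ on $S^\infty_r$ satisfy $\ddot f = -(\lvert\dot f\rvert^2/r^2)\,f$, and translating this back through the relation $\rho = 2\dot f/f$ should recover \eqref{generalizedhuntersaxton}.
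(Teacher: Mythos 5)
Your derivation is correct: the pairing $\llangle B(u,u),w\rrangle=\llangle u,\ad_u w\rrangle$, the identity $\diver[u,w]=u\cdot\grad\sigma-w\cdot\grad\rho$ (from $\Lie_{[u,w]}\mu=[\Lie_u,\Lie_w]\mu$), the integration by parts producing $\tfrac14\int_M\sigma\,(u\cdot\grad\rho+\tfrac12\rho^2)\,d\mu$, and the evaluation of the constant via $\int_M u\cdot\grad\rho\,d\mu=-\int_M\rho^2\,d\mu$ all check out and yield exactly \eqref{generalizedhuntersaxton}. The paper reaches the same equation by a longer route: it first derives the Euler--Arnold equation for the full $a$-$b$-$c$ metric \eqref{abcmetric} in Proposition \ref{generalBcomputation}, using the inertia operator $A$ and an identity for $\langle Au,[v,w]\rangle$ in the language of differential forms, and then specializes to $a=c=0$ in Corollary \ref{descentmetricvolum}, obtaining the exact one-form equation \eqref{homgeodesic} whose primitive is \eqref{generalizedhuntersaxton} with the constant $C(t)$ fixed by integrating over $M$ --- which is your zero-mean argument in disguise. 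Your version buys a self-contained scalar computation that never leaves the space of divergences and makes transparent why only $\rho=\diver u$ (and not the full field $u$, which is determined only up to a divergence-free summand) enters; the paper's version buys the whole family of special cases listed in Remark \ref{rem:EAeqns} at once. One point you should make explicit rather than leave implicit: for \eqref{generalizedhuntersaxton} to be the Euler--Arnold equation \emph{on the quotient} $\VolM$ one must check that the degenerate metric descends, i.e.\ condition \eqref{descent} of Proposition \ref{KhesinMisiolekprop}, which the paper verifies at the start of Corollary \ref{descentmetricvolum}. Your bracket identity gives this for free: if $\diver w=0$ then $\llangle v,\ad_w u\rrangle+\llangle u,\ad_w v\rrangle=-\tfrac14\int_M w\cdot\grad(\diver u\,\diver v)\,d\mu=0$, so it is worth one extra line to say so.
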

Note that in the special case $M=S^1$ differentiating 
Equation \eqref{generalizedhuntersaxton} with respect to the space variable 
gives the Hunter-Saxton equation \eqref{HS}. 
We postpone the proof of Proposition \ref{generalizedhuntersaxtonprop} 
until Section \ref{abcmetricsection} where we derive the Euler-Arnold equation 
in the general case of the right-invariant Sobolev $H^1$-metric \eqref{abcmetric} on $\DiffM$, 
see Corollary \ref{descentmetricvolum}. 

\begin{remark} \upshape 
The right-hand side of Equation \eqref{generalizedhuntersaxton} 
is independent of time for any initial condition $\rho_0$ because the integral 
$\int_M\rho^2\,d\mu$ corresponds to the energy (the squared length 
of the velocity) in the $\dot H^1$-metric on $\VolM$ 
and is constant along a geodesic. 
This invariance will also be verified by a direct computation in the proof below.
\end{remark}

Consider an initial condition in the form  
\begin{align} \label{HSic} 
\rho(0,x) = \diver{u}_0(x). 
\end{align} 
We already have an indirect method for solving 
the initial value problem for Equation \eqref{generalizedhuntersaxton} 
by means of Theorem \ref{isometricHSsphere}. 
We now proceed to give explicit formulas for the corresponding solutions. 
\begin{theorem}\label{huntersaxtonsoln}
Let $\rho=\rho(t,x)$ be the solution of the Cauchy problem \eqref{generalizedhuntersaxton}-\eqref{HSic} 
and suppose that $t \mapsto \eta(t)$ is the flow of the velocity field $u=u(t,x)$, i.e., 
$\tfrac{\partial}{\partial t}\eta(t,x) = u(t,\eta(t,x))$ where $\eta(0,x)=x$. 
Then 
\begin{equation}\label{explicitgenhssoln}
\rho\big(t, \eta(t,x)\big) 
= 
2\kappa\tan{\left( \arctan{ \frac{\diver{u_0(x)}}{2\kappa} } -\kappa t\right)},
\end{equation}
where 
\begin{equation}\label{kappadef}
\kappa^2 = \frac{1}{4\mu(M)} \int_M (\diver{u_0})^2 \, d\mu. 
\end{equation}
Furthermore, the Jacobian of the flow is 
\begin{equation}\label{lagrangianhuntersaxtonsoln}
\Jac_\mu\big(\eta(t,x)\big) 
= 
\Big( 
\cos{\kappa t} + \frac{\diver{u_0}(x)}{2\kappa} \sin{\kappa t}
\Big)^2.
\end{equation}
\end{theorem}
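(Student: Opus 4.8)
The plan is to pass to Lagrangian coordinates along the flow $\eta$, thereby reducing the nonlocal PDE \eqref{generalizedhuntersaxton} to a scalar ordinary differential equation along each characteristic. Writing $\tilde\rho(t,x) := \rho(t,\eta(t,x))$ and using $\partial_t\eta(t,x) = u(t,\eta(t,x))$, the chain rule gives $\frac{d}{dt}\tilde\rho = (\rho_t + u\cdot\nabla\rho)\circ\eta$. Substituting \eqref{generalizedhuntersaxton} turns this into
\begin{equation*}
\frac{d}{dt}\tilde\rho = -\frac12\,\tilde\rho^{\,2} - \frac{1}{2\mu(M)}\int_M \rho^2\,d\mu .
\end{equation*}
The whole argument then hinges on recognizing that the right-hand side is in fact constant in time.

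First I would verify directly, as anticipated in the Remark following Proposition \ref{generalizedhuntersaxtonprop}, that the energy $E(t) := \int_M \rho^2\,d\mu$ is conserved. Differentiating under the integral sign, inserting \eqref{generalizedhuntersaxton}, and integrating by parts over the closed manifold $M$ should yield $dE/dt = 0$: the nonlocal contribution is proportional to $\int_M \diver u\,d\mu = 0$, while the convective and cubic terms cancel once one writes $\int_M \rho\,(u\cdot\nabla\rho)\,d\mu = \frac12\int_M u\cdot\nabla(\rho^2)\,d\mu = -\frac12\int_M \rho^3\,d\mu$. Hence $E(t)\equiv \int_M(\diver u_0)^2\,d\mu = 4\kappa^2\mu(M)$ with $\kappa$ as in \eqref{kappadef}, and the characteristic equation becomes the autonomous Riccati equation $\frac{d}{dt}\tilde\rho = -\frac12(\tilde\rho^{\,2} + 4\kappa^2)$. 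This conservation law is the one genuinely substantive step; it is precisely the cancellation tuned by the nonlocal right-hand side of \eqref{generalizedhuntersaxton}, and everything afterwards is routine.

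Next I would separate variables. Since $\int \frac{d\tilde\rho}{\tilde\rho^{\,2}+4\kappa^2} = \frac{1}{2\kappa}\arctan\frac{\tilde\rho}{2\kappa}$, imposing the initial condition $\tilde\rho(0,x) = \diver u_0(x)$ from \eqref{HSic} gives $\arctan\frac{\tilde\rho}{2\kappa} = \arctan\frac{\diver u_0(x)}{2\kappa} - \kappa t$, which is exactly \eqref{explicitgenhssoln} after applying $2\kappa\tan(\cdot)$ to both sides.

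Finally, for the Jacobian I would invoke the transport identity \eqref{eq:DTJ}, which along the flow reads $\frac{d}{dt}\Jac_\mu\eta(t,x) = \tilde\rho(t,x)\,\Jac_\mu\eta(t,x)$, a linear ODE with $\Jac_\mu\eta(0,x)=1$. Integrating gives $\Jac_\mu\eta = \exp\bigl(\int_0^t \tilde\rho(s,x)\,ds\bigr)$; using the tangent formula just obtained together with $\int \tan = -\ln\lvert\cos\rvert$ one finds $\int_0^t\tilde\rho\,ds = 2\ln\bigl\lvert \cos(\theta_0-\kappa t)/\cos\theta_0\bigr\rvert$, where $\theta_0 := \arctan\frac{\diver u_0(x)}{2\kappa}$. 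Exponentiating, expanding $\cos(\theta_0-\kappa t)$ by the angle-addition formula and using $\tan\theta_0 = \diver u_0(x)/(2\kappa)$ then produces precisely \eqref{lagrangianhuntersaxtonsoln}.
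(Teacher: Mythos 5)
Your proposal is correct and follows essentially the same route as the paper: reduce to the Riccati ODE along characteristics, verify by integration by parts that the nonlocal term $\int_M\rho^2\,d\mu$ is a conserved energy, separate variables to get the tangent formula, and integrate the linear ODE $\frac{d}{dt}\Jac_\mu\eta = (\rho\circ\eta)\,\Jac_\mu\eta$ for the Jacobian. The only difference is cosmetic—you carry out the final $\int\tan = -\ln\lvert\cos\rvert$ integration explicitly where the paper leaves it implicit.
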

\begin{proof}
For any smooth real-valued function $f(t,x)$ the chain rule gives 
$$ 
\frac{d}{d t} \big( f(t,\eta(t,x)) \big) 
= 
\frac{\partial f}{\partial t}(t, \eta(t,x)) 
+ 
\left\langle u\big(t,\eta(t,x)\big), \nabla{f}\big(t,\eta(t,x)\big) \right\rangle \,.
$$ 
Using this we obtain from \eqref{generalizedhuntersaxton}  an equation 
for $f=\rho\circ\eta$ 
\begin{equation}\label{ODE}
\frac{df}{dt} 
+
\tfrac{1}{2} f^2 
= 
-C(t) \,,
\end{equation}
where 
$C(t) = (2\mu(M))^{-1}\int_M \rho^2 d\mu$, as remarked above, is in fact independent of time. 
Indeed, direct verification gives 
\begin{align*}
\mu(M) \frac{dC(t)}{dt} 
&= 
\int_M \rho \rho_t \, d\mu 
=
\int_M \diver{u} \diver{u_t} \, d\mu     \\
&= 
- \int_M \langle u, \nabla\diver{u}\rangle \diver{u} \, d\mu 
- 
\frac{1}{2} \int_M (\diver{u})^3 \, d\mu =0 \,,
\end{align*}
where the last cancellation follows from integration by parts. 

Set $C=2\kappa^2$. Then, for a fixed $x \in M$ the solution of 
the resulting ODE in \eqref{ODE} with initial condition $f(0)$ 
has the form 
$$ 
f(t) 
= 
2\kappa \tan\big( \arctan{ ( f(0)/2\kappa )} - \kappa t \big),
$$
which is precisely \eqref{explicitgenhssoln}.

In order to find an explicit formula for the Jacobian we first compute the time derivative 
of $\Jac_\mu(\eta)\mu$ to obtain 
$$
\frac{d}{dt} \big( \Jac_\mu(\eta) \mu \big) 
= 
\frac{d}{dt} ( \eta^\ast \mu ) 
= 
\eta^\ast( \Lie_u \mu ) 
= 
\eta^\ast( \diver{u}\, \mu) 
= 
(\rho\circ\eta) \, \Jac_\mu(\eta) \mu\,. 
$$
This gives a differential equation for $\Jac_\mu\eta$, which we can now solve 
with the help of \eqref{explicitgenhssoln} 
to get the solution in the form of \eqref{lagrangianhuntersaxtonsoln}.
\end{proof}

Note that \eqref{lagrangianhuntersaxtonsoln} completely determines 
the Jacobian 
regardless of any ``ambiguity'' in the velocity field $u$ satisfying 
$\diver u=\rho$ in equation \eqref{generalizedhuntersaxton}.
The reason is that  the Jacobians can be considered as elements of 
the quotient space $\VolM=\DiffM/\DiffmuM$.
(It will be convenient later to resolve the ambiguity by choosing velocity 
as the gradient field $u= \nabla\Delta^{-1}\rho$.)

\begin{remark}[Great circles on $S^\infty_r$]\label{greatcircles} \upshape 
We emphasize that formula (\ref{lagrangianhuntersaxtonsoln}) 
for the Jacobian $\Jac_\mu\eta$ of the flow 
is best understood in light of the correspondence between 
geodesics in $\VolM$ and those on the infinite-dimensional 
sphere $S^\infty_r$ established in Theorem \ref{isometricHSsphere}. 
Indeed, the map
$$ 
t \to \sqrt{\Jac_\mu\big(\eta(t,x)\big)} 
= 
\cos{\kappa t} + \frac{\diver{u_0}(x)}{2\kappa} \sin{\kappa t} 
$$
describes the great circle 
on the sphere $S^\infty_r \subset L^2(M, d\mu)$ 
passing through the point $1$ with initial velocity 
$\Phi_{\ast e} (u_0) = \frac 12 \diver{u_0}$. 
\end{remark}

Having an explicit formula for the Jacobian $\Jac_\mu\eta$ 
raises a question whether it is possible to construct an associated global flow 
$\eta : \mathbb{R}\times M \to M$ of the velocity field $u$ 
which could be interpreted as a global (in time) weak solution 
of equation \eqref{generalizedhuntersaxton}. 
We address this question in Section \ref{Heuristics}.


\subsection{Integrability of the geodesic flow on a sphere} 
\label{complete_int} 
\nequation 

Next, we examine in more detail the Hamiltonian structure of 
the $\dot{H}^1$-geodesic flow on $\VolM$. 
First, however, we recall the finite dimensional case. 

\subsubsection*{The geodesic flow on the standard sphere}
Consider the unit sphere $S^{n-1}\subset\R^n$ given by the equation 
$\sum_{j=1}^n q_j^2 = 1$ with $q= (q_1, \dots, q_n) \in \mathbb{R}^n$ 
and equipped with its standard round metric. 
The geodesic flow in this metric is defined by the Hamiltonian 
$H=\sum_{j=1}^n p_j^2$ 
on the cotangent bundle $T^*S^{n-1}$.
It is a classical example of a completely integrable system, 
which has the property that all of its orbits are closed. 
Indeed, the projections of these orbits from $T^*S^{n-1}$ to $S^{n-1}$ 
are the great circles.

\begin{proposition}
The functions $h_{ij}=p_iq_j-p_jq_i, ~1\le i<j\le n$ on $T^*\R^n$ 
(as well as their reductions to $T^*S^{n-1}$) 
commute with the Hamiltonian $H=\sum_{j=1}^n p_j^2$ 
and generate the Lie algebra $\mathfrak{so}(n)$.
\end{proposition}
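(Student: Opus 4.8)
The plan is to work first on the full phase space $T^*\R^n$ with canonical coordinates $(q,p)$ and the standard Poisson bracket $\{f,g\}=\sum_k(\partial_{q_k}f\,\partial_{p_k}g-\partial_{p_k}f\,\partial_{q_k}g)$, establish both claims there, and then descend to $T^*S^{n-1}$. Commutation with $H$ on $T^*\R^n$ is a one-line computation: since $\partial_{q_k}H=0$ and $\partial_{p_k}H=2p_k$, while $\partial_{q_k}h_{ij}=p_i\delta_{jk}-p_j\delta_{ik}$, one gets
\[
\{h_{ij},H\}=\sum_k(p_i\delta_{jk}-p_j\delta_{ik})\,2p_k=2p_ip_j-2p_jp_i=0.
\]

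For the Lie-algebra claim I would identify each $h_{ij}=p_iq_j-p_jq_i$ with the momentum function $P_{X_{ij}}$ of the infinitesimal rotation vector field $X_{ij}=q_j\partial_{q_i}-q_i\partial_{q_j}$ on $\R^n$. The fields $X_{ij}$, $1\le i<j\le n$, form the standard basis of $\mathfrak{so}(n)$ acting linearly on $\R^n$, and the assignment $X\mapsto P_X$ from base-linear vector fields to fiber-linear functions is a Lie-algebra anti-homomorphism, $\{P_X,P_Y\}=-P_{[X,Y]}$. Thus the Poisson brackets of the $h_{ij}$ reproduce the bracket relations of the $X_{ij}$; carrying out the elementary computation directly (differentiating $h_{ij}$ and $h_{kl}$ and collecting the Kronecker deltas) yields
\[
\{h_{ij},h_{kl}\}=\delta_{jk}h_{il}-\delta_{ik}h_{jl}-\delta_{jl}h_{ik}+\delta_{il}h_{jk},
\]
which are precisely the structure relations of $\mathfrak{so}(n)$. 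Hence the $\binom{n}{2}$ functions $h_{ij}$ span a Lie algebra under $\{\cdot,\cdot\}$ isomorphic to $\mathfrak{so}(n)$.

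To pass to the reduced phase space $T^*S^{n-1}$ I would not attempt to push the naive bracket through the constraints $\sum q_j^2=1$ and $\sum q_jp_j=0$ directly, since $T^*S^{n-1}$ is a constrained symplectic submanifold rather than an open piece of $T^*\R^n$. The clean route is Noether's theorem: the group $SO(n)$ acts on $S^{n-1}$ by isometries of the round metric, so its cotangent lift preserves the geodesic Hamiltonian $H$ on $T^*S^{n-1}$, and the $h_{ij}$ are exactly the components of the associated equivariant moment map. Invariance of $H$ under this Hamiltonian action is equivalent to $\{h_{ij},H\}=0$ on $T^*S^{n-1}$, while equivariance of the moment map guarantees that the $h_{ij}$ still close under the reduced bracket with the same $\mathfrak{so}(n)$ structure constants. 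The one point deserving care—what I regard as the main subtlety—is precisely the compatibility of the moment map with restriction: one must observe that each $X_{ij}$ is tangent to $S^{n-1}$, so that the cotangent-lifted $SO(n)$-action on $T^*\R^n$ genuinely restricts to $T^*S^{n-1}$ with moment-map components given by the restrictions of the $h_{ij}$, after which both conclusions follow from the standard properties of moment maps for Hamiltonian group actions.
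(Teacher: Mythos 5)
Your proposal is correct and follows essentially the same route as the paper: the $h_{ij}$ are the momenta of the rotation fields, which are isometries of $S^{n-1}$, so they commute with the geodesic Hamiltonian, and a direct bracket computation gives the $\mathfrak{so}(n)$ relations (your formula $\{h_{ij},h_{kl}\}=\delta_{jk}h_{il}-\delta_{ik}h_{jl}-\delta_{jl}h_{ik}+\delta_{il}h_{jk}$ specializes to the paper's $\{h_{ij},h_{jk}\}=h_{ik}$). Your extra care about why the computation on $T^*\R^n$ legitimately descends to the constrained submanifold $T^*S^{n-1}$ via the equivariant moment map is a welcome elaboration of a point the paper leaves implicit, but it is not a different argument.
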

\begin{proof}
The Hamiltonian functions $h_{ij}$ in $T^\ast \R^n$ generate rotations 
in the $(q_i,q_j)$-plane in $\R^n$, which are isometries of $S^{n-1}$.
These rotations commute with the geodesic flow on the sphere 
and hence $\{h_{ij},H\}=0$.

A direct computation gives  $\{h_{ij},h_{jk}\}=h_{ik}$, 
which are the commutation relations of $\mathfrak{so}(n)$.
\end{proof}

This example illustrates the so-called noncommutative integrability: 
the geodesic flow may possess many first integrals 
(in the case of the sphere there are $n(n-1)/2$ of them),  
however they need not be in involution.  
Nevertheless, one can proceed as follows, see \cite{Bolsinov} for  details. 

Consider the functions 
\begin{align*} 
H_1&:=h^2_{12},   \\ 
H_2&:=h^2_{12}+h^2_{13}+h^2_{23},   \\ 
 &\; \; \; \vdots 
  \\ 
 H_{n-1} & :=h^2_{12}+\dots+h^2_{n-1\,n}
=
\sum_{j=1}^n p_j^2 \sum_{j=1}^n q_j^2 
-
\Big( \sum_{j=1}^n q_jp_j \Big)^2. 
\end{align*} 
On the cotangent bundle $T^*S^{n-1}$ the function $H_{n-1}$ coincides with 
the Hamiltonian $H$ 
since $\sum_{j=1}^n q^2_j=1$ 
and 
$\sum_{j=1}^n p_iq_i=0$ 
(``the tangent plane equation''). 
We thus have 

\begin{proposition}
The functions $\left\{H_i ~:~i=1,...,n-1\right\}$ form a complete set of independent integrals 
in involution for the geodesic flow on the round sphere $S^{n-1}\subset\R^n$, 
that is $\{H_i,H_j\}=0$, for any $1 \leq i, j \leq n-1$. 
\end{proposition}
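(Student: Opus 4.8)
The plan is to exploit the nested chain of rotation subalgebras $\mathfrak{so}(2)\subset\mathfrak{so}(3)\subset\cdots\subset\mathfrak{so}(n)$, under which each $H_k$ appears as a Casimir of the $(k+1)$-st link and therefore Poisson-commutes with everything built from the shorter links. First I would record the full Poisson relations of the generators, extending the identity $\{h_{ij},h_{jk}\}=h_{ik}$ of the preceding proposition to
\begin{equation*}
\{h_{ij},h_{kl}\}
=
\delta_{jk}h_{il}-\delta_{ik}h_{jl}+\delta_{il}h_{jk}-\delta_{jl}h_{ik},
\end{equation*}
valid on $T^*\R^n$ with the convention $h_{ji}=-h_{ij}$ and $h_{ii}=0$. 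For each $m$ let $\mathfrak{so}(m)$ denote the span of $\{h_{ij}:1\le i<j\le m\}$, i.e. the infinitesimal rotations fixing $q_{m+1},\dots,q_n$; the relations above show that this span is closed under the bracket, so the displayed chain is a genuine nested family of Poisson subalgebras of $\mathfrak{so}(n)$.

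The heart of the argument is that $H_k=\sum_{1\le i<j\le k+1}h_{ij}^2$ is, up to normalization, the quadratic Casimir of $\mathfrak{so}(k+1)$. I would verify directly that $\{H_k,h_{ab}\}=0$ for every pair $a,b\le k+1$: writing $\{H_k,h_{ab}\}=2\sum_{i<j\le k+1}h_{ij}\{h_{ij},h_{ab}\}$ and substituting the structure relations, each surviving cross term is matched by an equal term of opposite sign, since the summation range $\{i,j\le k+1\}$ is stable under the index transpositions produced by the bracket. Granting this, involutivity is immediate: for $m<k$ the function $H_m$ is a polynomial in the generators $h_{ij}$ with $i,j\le m+1\le k+1$, and since $\{H_k,\cdot\}$ is a derivation annihilating each such generator it annihilates $H_m$ as well. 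Hence $\{H_k,H_m\}=0$ on $T^*\R^n$, and this vanishing descends to the reductions on $T^*S^{n-1}$.

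It remains to check independence and to count. Since $n-1=\dim S^{n-1}=\tfrac12\dim T^*S^{n-1}$, exhibiting $n-1$ functionally independent integrals in involution establishes complete integrability in the Liouville--Arnold sense; that $H_{n-1}$ restricts to the geodesic Hamiltonian $H$ was already noted above. The step I expect to be the main obstacle is independence of $dH_1,\dots,dH_{n-1}$ on a dense open subset of $T^*S^{n-1}$. Here I would use the staircase structure of the chain: the difference $H_k-H_{k-1}=\sum_{i=1}^{k}h_{i,k+1}^2$ shows that $H_k$ is the first member of the family to involve the generators $h_{i,k+1}$, so with a suitable ordering of coordinates the Jacobian of $(H_1,\dots,H_{n-1})$ is block-triangular and hence nonsingular at a generic point; alternatively one can exhibit a single point of $T^*S^{n-1}$ at which the differentials are manifestly linearly independent and invoke openness. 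The underlying chain-of-subalgebras (Thimm) mechanism is the standard route to such commutative families, see \cite{Bolsinov}.
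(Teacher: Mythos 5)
Your argument is correct and coincides with the paper's own reasoning: the proof given there is ``a routine calculation,'' immediately followed by exactly the chain-of-subalgebras observation you develop, namely that $H_k$ is a Casimir of $\mathfrak{so}(k+1)$ and hence commutes with every $H_m$ for $m<k$, with independence coming from the new generators $h_{j,k+1}$ entering at each step. You simply supply the structure constants and the independence details more explicitly than the paper does.
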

\begin{proof}
A routine calculation. 
\end{proof} 

Alternatively, one can consider the chain of subalgebras
$\mathfrak{so}(2)\subset \mathfrak{so}(3) \subset...\subset \mathfrak{so}(n)$. 
Then $H_k$ is one of the Casimir functions for $\mathfrak{so}(k+1)$ 
and it therefore commutes with any function on $\mathfrak{so}(k+1)^*$. 
In particular, it commutes with all the preceding functions $H_m$ for $m<k$. 
They are functionally independent because at each step $H_k$ 
involves new functions $h_{jk}$.

\begin{remark} \upshape 
Note that all the Hamiltonians are quadratic in momenta.
In principle, it should be possible to compute them as limits 
of the quadratic first integrals associated with the geodesic flow on an ellipsoid, 
as the ellipsoid semi-axes $a_i$ approach the value $1$ with different orders 
$1+\epsilon^i$ for $i=1, \dots, n$;
we again refer to \cite{Bolsinov} for more details. 
\end{remark} 

\begin{remark} \label{rem:biham}\upshape 
The geodesic flow on an $n$-dimensional elipsoid is known to be
a bi-hamiltonian  dynamical system and its first integrals can be obtained by a procedure similar to the Lenard-Magri scheme, see e.g., \cite{Tabachnikov, Matveev-Topalov}. 
On the other hand, observe that the limit case of the geodesic flow on the sphere 
in a sense 
manifests the ``highest degree'' of integrability: 
the corresponding Arnold-Liouville tori are $1$-dimensional and all orbits are closed.
\end{remark}

\subsubsection*{The geodesic flow on an infinite-dimensional sphere}
The finite-dimensional construction of the first integrals $H_k$ described above 
allows the following infinite-dimensional generalization.

Consider the unit sphere $S^\infty_1 \subset L^2(M, d\mu)$. 
In fact, choose an orthonormal basis  $\{ e_j \}_{j=1}^{\infty}$ in $L^2(M, d\mu)$ 
(for example, the normalized  eigenfunctions of the Laplace-Beltrami 
operator on $M$)  and write 
$$
S^\infty_1
=
\left\{ \sum_{j=1}^\infty q_je_j : ~\sum_{j=1}^\infty |q_j|^2 = 1 \right\}.
$$

The Hamiltonian of the geodesic flow on the sphere $S^\infty_1$ 
is given by the integral 
$H=\int_M p^2\,d\mu$ 
where $p\in L^2(M, d\mu)$.

As in the previous subsection we define functionals
$h_{ij} =p_iq_j-p_jq_i, ~1\le i<j$, 
the infinitesimal generators of rotations in the $(q_i,q_j)$-plane.
They now form the Lie algebra $\mathfrak{so}(\infty)$ of the group of unitary operators 
on $L^2$ and, as before, generate an infinite sequence of functionally independent 
first integrals $\left\{ H_k \right\}_{k=1}^{\infty}$ in involution. 
This sequence corresponds to the infinite chain of embeddings 
$\mathfrak{so}(2)\subset \mathfrak{so}(3) \subset...\subset \mathfrak{so}(\infty)$.

Alternatively, one can introduce a decreasing sequence of Lie algebras
$
\mathfrak{so}(\infty)\supset \mathfrak{so}(\infty-1) \supset \mathfrak{so}(\infty-2) \supset \dots 
$
and construct the corresponding conserved quantities as follows.
Let $p^{(1)}$ be the orthogonal projection of $p \in L^2(M, d\mu)$ 
onto the hyperplane 
$e_1^\perp$. 
Let $p^{(2)}$ be the projection of $p$ onto the hyperplane 
$( e_1, e_2 )^\perp\subset L^2(M, d\mu)$ of codimension 2, etc. 
We define the $q^{(k)}$'s similarly and then set 
$$
H^{(k)}
= 
\int_M (p^{(k)})^2  d\mu \int_M (q^{(k)})^2 d\mu 
-
\left( \int_M q^{(k)}p^{(k)} \, d\mu \right)^2 
$$
for $k=0, 1, 2, \dots$. 
Note that $H^{(0)}$ coincides with the Hamiltonian $H=\int_M p^2\,d\mu$
on $S^\infty_1$. Furthermore, we have 

\begin{proposition}
The functionals $H^{(k)}$ are in involution, i.e., 
$\{H^{(k)},H^{(m)}\}=0$ for any $k,m=0,1,2,\dots$.
\end{proposition}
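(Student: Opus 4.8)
The plan is to reduce the infinite-dimensional involution statement to the finite-dimensional one already established earlier in this subsection. The key structural observation is that each functional $H^{(k)}$ depends only on the projections $p^{(k)}$ and $q^{(k)}$ onto the codimension-$k$ subspace $(e_1,\dots,e_k)^\perp$, and these are built from the coordinate functions $q_j = \int_M q\,e_j\,d\mu$ and momenta $p_j = \int_M p\,e_j\,d\mu$ with $j>k$. Concretely, expanding the squares in the definition gives
\begin{equation*}
H^{(k)} = \sum_{k<i<j} (p_iq_j-p_jq_i)^2 = \sum_{k<i<j} h_{ij}^2,
\end{equation*}
so that $H^{(k)}$ is exactly the Casimir-type quantity of the chain from the finite-dimensional case, but now built out of the ``tail'' generators $h_{ij}$ with $i>k$. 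Thus each $H^{(k)}$ is a quadratic function of the angular-momentum variables $h_{ij}$ alone.

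The next step is to record the Poisson brackets of the $h_{ij}$ among themselves on $T^*L^2(M,d\mu)$. As in the finite-dimensional proposition proved above, one has $\{h_{ij},h_{kl}\}$ given by the $\mathfrak{so}(\infty)$ relations: the bracket vanishes unless the index pairs share exactly one element, in which case it produces another single $h$. Since each $H^{(k)}$ is a polynomial in the $h_{ij}$, every bracket $\{H^{(k)},H^{(m)}\}$ can be computed purely within the Lie–Poisson structure on $\mathfrak{so}(\infty)^*$, with no reference to the underlying $p,q$ beyond this. This reduces the claim to the purely algebraic statement that the functions $H^{(k)} = \sum_{k<i<j} h_{ij}^2$ pairwise commute in the Lie–Poisson bracket of $\mathfrak{so}(\infty)$.

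To finish, I would invoke the subalgebra-chain argument already used in the finite-dimensional setting. For fixed $k$, the quantity $H^{(k)}$ is (up to the shift of indices) the quadratic Casimir of the subalgebra $\mathfrak{so}(\infty-k)$ spanned by $\{h_{ij} : i,j>k\}$, and hence Poisson-commutes with every function supported on that subalgebra. Given two indices $k<m$, the subalgebra associated to $H^{(m)}$ is contained in the one associated to $H^{(k)}$; since $H^{(m)}$ is a function on the smaller subalgebra and $H^{(k)}$ is the Casimir of the larger one containing it, we get $\{H^{(k)},H^{(m)}\}=0$. The reduction to the cotangent bundle of the sphere $S^\infty_1$ is then automatic, since restriction to an invariant submanifold preserves the vanishing of brackets.

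The main obstacle I anticipate is analytic rather than algebraic: one must ensure that the infinite sums defining $H^{(k)}$ and their Poisson brackets are well-defined and convergent for $p,q\in L^2(M,d\mu)$, so that the formal $\mathfrak{so}(\infty)$ computation is legitimate. Because each bracket $\{H^{(k)},H^{(m)}\}$ collapses, via the telescoping of the $\mathfrak{so}(\infty)$ relations, to a sum that cancels term by term exactly as in finite dimensions, no genuinely new convergence phenomenon arises beyond controlling these tail sums; the finite-dimensional cancellation pattern persists verbatim. I would therefore present the argument as a direct transcription of the finite-dimensional proof, noting only that the relevant quadratic expressions lie in $L^2$ and that the brackets are computed on finitely many interacting terms at a time.
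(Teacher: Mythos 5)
Your proof is correct and follows essentially the same route as the paper: the paper's one-line argument is precisely that $H^{(k)}$ is a Casimir for the action of the unitary (orthogonal) group on $(e_1,\dots,e_k)^\perp$, so that the nested chain of subalgebras forces all brackets to vanish, which is what you spell out via the Lagrange-identity expansion $H^{(k)}=\sum_{k<i<j}h_{ij}^2$. Your added remarks on convergence and on the explicit $\mathfrak{so}(\infty)$ relations are sound elaborations of the same idea rather than a different argument.
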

\begin{proof} Observe that the functional $H^{(0)}$ can be thought of as 
a Casimir invariant for the action of the unitary group in $L^2(M, d\mu)$, 
while $H^{(k)}$ is a Casimir for the action of 
the unitary group on the subspace $(e_1,...,e_k)^\perp\subset L^2(M, d\mu)$.
\end{proof}

Either one of the two sequences of functionals 
$\{ H_k \}_{k=1}^\infty$ or $\{ H^{(k)} \}_{k=1}^\infty$ 
constructed above 
provides infinitely many conserved quantities 
for the geodesic flow on the unit sphere $S^\infty_1 \subset L^2(M, d\mu)$. 

\begin{remark} \upshape 
As we mentioned in Remark \ref{rem:biham} 
the geodesic flow on the round sphere is bi-hamiltonian 
and completely integrable in a strong sense. 
This is true in the finite- as well as the infinite-dimensional cases. 
Nevertheless, it may still be of interest to exhibit explicitly 
the bi-hamiltonian structure of the Euler-Arnold equation \eqref{generalizedhuntersaxton} 
and, in particular, examine how the functionals constructed in this subsection 
correspond to the Lenard-Magri type invariants of the Hunter-Saxton equation on the circle $M=S^1$. 
\end{remark}


\section{Global properties of solutions}
\label{global}
\nequation

Explicit formulas 
of Theorem \ref{huntersaxtonsoln} make it possible 
to give a fairly complete picture of the global behavior of solutions to 
the $\dot H^1$ Euler-Arnold equation on $\VolM$ for any manifold $M$. 
It turns out for example that any smooth solution of this equation
 \eqref{generalizedhuntersaxton} 
has finite lifespan and the blowup mechanism can be precisely described. 

On the other hand, the fact that the Jacobian of the associated flow 
is reasonably well-behaved suggests a way of constructing global weak solutions. 
By the result of Moser~\cite{moser} the function on the right side of 
formula \eqref{lagrangianhuntersaxtonsoln} 
will be the Jacobian of some diffeomorphism as long as it is nowhere zero. 
In the first two subsections below we shall explore the possibility of constructing 
a smooth flow whose Jacobian may vanish at some points 
(as inevitably happens to the solutions past the blowup time). 
In Section \ref{fredholm} we discuss Fredholm properties of 
the associated Riemannian exponential map on $\VolM$. 

\subsection{Lifts to the diffeomorphism group}\label{weakmoser}
First, we note that there can be no global smooth (classical) solutions 
of the Euler-Arnold equation \eqref{generalizedhuntersaxton}. 
As in the case of the one-dimensional Hunter-Saxton equation 
all solutions break down in finite time.

\begin{proposition} \label{Tmax} 
The maximal existence time of a (smooth) solution of the Cauchy problem 
\eqref{generalizedhuntersaxton}-\eqref{HSic} 
constructed in Theorem \ref{huntersaxtonsoln} is 
\begin{equation}\label{blowuptime}
0< T_{\max} 
= 
\frac{\pi}{2\kappa} 
+ 
\frac{1}{\kappa} \arctan{\left( \frac{1}{2\kappa}\displaystyle\inf_{x\in M} \diver{u}_0(x) \right)}. 
\end{equation}
Furthermore, as $t \nearrow T_{\max}$ we have $\lVert u(t)\rVert_{C^1} \nearrow \infty$. 
\end{proposition}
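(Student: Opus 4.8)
The plan is to read the breakdown of the solution directly off the explicit Lagrangian formula \eqref{explicitgenhssoln}. Writing $\theta_0(x) = \arctan\!\big(\diver u_0(x)/2\kappa\big)$, that formula expresses the divergence along the flow as
$$
\rho\big(t,\eta(t,x)\big) = 2\kappa\tan\big(\theta_0(x) - \kappa t\big),
$$
so for each fixed $x$ the quantity $\rho\circ\eta$ stays finite precisely while the argument $\theta_0(x) - \kappa t$ remains in $(-\pi/2,\pi/2)$. Since $\theta_0(x)\in(-\pi/2,\pi/2)$ at $t=0$ and the argument decreases with $t$, the first singularity occurs when $\theta_0(x)-\kappa t = -\pi/2$, i.e.\ at the pointwise time $t(x) = \tfrac{1}{\kappa}\big(\theta_0(x)+\tfrac{\pi}{2}\big)$, at which $\rho\circ\eta \to -\infty$. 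The same conclusion is visible in \eqref{lagrangianhuntersaxtonsoln}: rewriting the Jacobian as $\Jac_\mu\eta(t,x) = \big(1+ (\diver u_0)^2/4\kappa^2\big)\cos^2\!\big(\kappa t - \theta_0(x)\big)$ shows that it vanishes at exactly the same time $t(x)$, so the flow ceases to be a diffeomorphism there.

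First I would take the infimum of the pointwise breakdown times over $M$. Because $M$ is compact and $u_0$ is smooth, $\diver u_0$ attains its minimum $m=\min_{x\in M}\diver u_0(x)$, and since $\arctan$ is increasing the earliest singular time is
$$
T_{\max} = \inf_{x\in M} t(x) = \frac{\pi}{2\kappa} + \frac{1}{\kappa}\arctan\!\Big(\frac{1}{2\kappa}\,m\Big),
$$
which is the claimed formula. Positivity is immediate: $\arctan$ takes values in $(-\pi/2,\pi/2)$, so the second term exceeds $-\pi/(2\kappa)$ and hence $T_{\max}>0$. (Here $\kappa>0$ by \eqref{kappadef}; the degenerate case $\kappa=0$ corresponds to $\diver u_0\equiv 0$, giving a stationary solution and no blowup.)

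Next I would confirm that the solution is genuinely smooth on $[0,T_{\max})$ and not merely defined pointwise in the Lagrangian label. For any $T<T_{\max}$ one has $\theta_0(x)-\kappa t \in (-\pi/2,\pi/2)$ uniformly for $x\in M$ and $t\in[0,T]$, so $\rho\circ\eta$ is smooth and $\Jac_\mu\eta$ is bounded away from zero; by Moser's lemma $\eta(t,\cdot)$ remains a diffeomorphism, and $\rho = \diver u$ (with $u=\grad\Delta^{-1}\rho$) is recovered as a smooth field on $M$. To obtain the $C^1$ blowup I would exploit that $\eta(t,\cdot)$ is a diffeomorphism for $t<T_{\max}$, so the Eulerian range of $\rho$ equals its Lagrangian range, $\inf_{y\in M}\rho(t,y) = \inf_{x\in M}\rho\big(t,\eta(t,x)\big)$. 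Evaluating at the minimizer $x^\ast$ of $\diver u_0$ gives $\rho\big(t,\eta(t,x^\ast)\big)\to -\infty$ as $t\nearrow T_{\max}$, hence $\|\diver u(t)\|_{L^\infty}\to\infty$. Since $\diver u$ is dominated by the first derivatives of $u$, one has $\|u(t)\|_{C^1}\ge c\,\|\diver u(t)\|_{L^\infty}$ and therefore $\|u(t)\|_{C^1}\nearrow\infty$, for any $u$ realizing $\diver u=\rho$.

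I expect the main obstacle to be the bookkeeping that ties the Lagrangian formulas to the Eulerian existence time. The explicit expressions blow up pointwise in the label $x$, whereas the assertion concerns the maximal time for which $\rho(t,\cdot)$ exists as a smooth field on $M$; the structural fact making these agree is that the divergence singularity and the vanishing of $\Jac_\mu\eta$ occur \emph{simultaneously}, at the same $t(x)$, so loss of the diffeomorphism property and loss of boundedness of $\rho$ cannot be decoupled. Verifying that nothing degenerates earlier---in particular that the upper asymptote $\theta_0(x)-\kappa t = \pi/2$ is never reached for $t>0$, which holds since the argument starts below $\pi/2$ and only decreases---is the small point that requires care.
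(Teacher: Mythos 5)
Your proposal is correct and follows essentially the same route as the paper, which simply reads the blowup time off formula \eqref{explicitgenhssoln} (noting that the tangent's argument first reaches $-\pi/2$ at the point minimizing $\diver u_0$) and observes, alternatively, that the Jacobian \eqref{lagrangianhuntersaxtonsoln} vanishes at the same instant. The extra details you supply---the simultaneity of the two degenerations and the passage from the Lagrangian to the Eulerian $C^1$ blowup---are accurate fillings-in of what the paper leaves implicit.
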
 
\begin{proof} 
This follows at once from formula \eqref{explicitgenhssoln} 
using the fact that $\diver{u}=\rho$. 
Alternatively, from formula \eqref{lagrangianhuntersaxtonsoln} 
we observe that the flow of $u(t,x)$ ceases to be a diffeomorphism at $t = T_{\max}$. 
\end{proof} 

Observe that before a solution reaches the blow-up time it is always possible to lift 
the corresponding geodesic to a smooth flow of diffeomorphisms using the classical construction 
of Moser \cite{moser}:

\begin{proposition}  \label{moser} 
There exists a family of smooth diffeomorphisms $\eta(t)$ in $\DiffM$ satisfying \eqref{lagrangianhuntersaxtonsoln}, i.e.
such that $\Jac_\mu(\eta(t)) = \varphi(t)$ where 
\begin{equation} \label{varphi} 
\varphi(t,x) = \Big( 
\cos{\kappa t} + \frac{\diver{u_0}(x)}{2\kappa} \sin{\kappa t}
\Big)^2\,,
\end{equation}
provided that $0 \leq t < T_{\max}$. 
\end{proposition}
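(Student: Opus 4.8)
The plan is to realize $\varphi(t)$ as a Jacobian by invoking Moser's theorem \cite{moser}, applied with smooth dependence on the parameter $t$. Moser's lemma produces, for any pair of volume forms of equal total mass on a closed manifold, a diffeomorphism carrying one to the other; so the two facts I must establish before applying it are that for each $t\in[0,T_{\max})$ the function $\varphi(t,\cdot)$ is smooth and strictly positive on $M$, and that the form $\varphi(t)\mu$ has the same total mass as $\mu$, namely $\mu(M)$.

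For the mass condition I would expand the square in \eqref{varphi} and integrate term by term. The cross term carries the factor $\int_M \diver u_0\,d\mu$, which vanishes since $M$ is closed, while the $\sin^2\kappa t$ term carries $\int_M(\diver u_0)^2\,d\mu = 4\kappa^2\mu(M)$ by the definition \eqref{kappadef} of $\kappa$; hence $\int_M\varphi(t)\,d\mu=(\cos^2\kappa t+\sin^2\kappa t)\,\mu(M)=\mu(M)$. For positivity, rewrite the base of the square in amplitude-phase form as $\cos\kappa t + \tfrac{\diver u_0}{2\kappa}\sin\kappa t = \sqrt{1+(\diver u_0/2\kappa)^2}\;\cos\big(\kappa t - \arctan(\diver u_0/2\kappa)\big)$. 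This stays positive precisely while $\kappa t - \arctan(\diver u_0(x)/2\kappa)<\pi/2$ for every $x$, and taking the infimum over $x\in M$ shows the first failure of positivity occurs exactly at the time $T_{\max}$ given by \eqref{blowuptime}. Thus $\varphi(t,\cdot)>0$ throughout $[0,T_{\max})$, and $\varphi$ is manifestly smooth in $(t,x)$.

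With both hypotheses in hand, the construction is the standard Moser homotopy. For fixed $t$ I would interpolate by the volume forms $\mu_s=\big((1-s)+s\,\varphi(t)\big)\mu$, $s\in[0,1]$, each of which is positive (by the positivity just established) and of total mass $\mu(M)$ (by linearity and the mass computation). Seeking a path $\psi_s$ of diffeomorphisms with $\psi_s^{*}\mu_s=\mu$ leads, upon differentiating in $s$, to the homological equation $\Lie_{X_s}\mu_s=-(\varphi(t)-1)\mu$ for a time-dependent vector field $X_s$. This is solvable because the right-hand side is an exact top-form, integrating to zero, so that $\iota_{X_s}\mu_s$ may be taken to be a primitive obtained from Hodge theory and $X_s$ is then determined by nondegeneracy of $\mu_s$. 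Letting $\eta(t)$ be the inverse of the time-one flow of $X_s$ yields $\eta(t)^{*}\mu=\varphi(t)\mu$, i.e. $\Jac_\mu(\eta(t))=\varphi(t)$ as in \eqref{lagrangianhuntersaxtonsoln}.

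The one point requiring genuine care — and the main obstacle — is smooth dependence of $\eta(t)$ on $t$: the classical lemma already gives existence for each fixed $t$. Since the right-hand side $-(\varphi(t)-1)\mu$ of the homological equation depends smoothly on $t$ and the elliptic operator used to invert it varies smoothly, the field $X_s$ and its flow, hence $\eta(t)$, depend smoothly on $t$ on every compact subinterval of $[0,T_{\max})$; this is exactly the routine Sobolev/ILH verification alluded to in the Introduction, and I would invoke \cite{Ebin-Marsden} for the parametrized version of Moser's construction in the $H^s$ category. We obtain the family only up to $T_{\max}$ because the positivity of $\varphi$ degenerates there.
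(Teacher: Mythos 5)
Your proof is correct, but it is organized differently from the paper's. You fix $t$, run the classical Moser interpolation $\mu_s=\big((1-s)+s\varphi(t)\big)\mu$ in an auxiliary parameter $s\in[0,1]$, and then argue separately that the resulting diffeomorphism depends smoothly on $t$ --- the step you rightly flag as the one requiring genuine care. The paper instead uses $t$ itself as the Moser homotopy parameter: since $\int_M\varphi(t)\,d\mu\equiv\mu(M)$ forces $\int_M\partial_t\varphi\,d\mu=0$, one solves $\Delta f=-\partial\varphi/\partial t$ (here the explicit trigonometric form of $\varphi$ even gives $f(t,x)=f_1(x)\cos 2\kappa t+f_2(x)\sin 2\kappa t$), sets $X=\grad f/\varphi$, and checks via Cartan's formula that the flow $\xi(t)$ of $X$ satisfies $\frac{d}{dt}\xi^*(\varphi\mu)=0$, hence $\xi(t)^*(\varphi(t)\mu)=\mu$; then $\eta(t)=\xi(t)^{-1}$. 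The payoff of the paper's route is that smooth dependence on $t$ comes for free --- $\eta(t)$ is by construction the inverse of the flow of a single smooth time-dependent vector field --- whereas your route needs the parametrized elliptic-regularity argument you sketch (which does go through, e.g. in the $H^s$ category of \cite{Ebin-Marsden}, but is extra work). Your preliminary verifications (constancy of $\int_M\varphi\,d\mu$, and positivity of $\cos\kappa t+\tfrac{\diver u_0}{2\kappa}\sin\kappa t$ on $[0,T_{\max})$ via the amplitude--phase rewriting, consistent with \eqref{blowuptime}) agree with the paper's and are correct.
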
 
\begin{proof} 
Integrating $\varphi(t,x)$  over the manifold gives 
\begin{align*}
\int_M \varphi(t,x) \, d\mu 
=
\mu(M) \cos^2{\kappa t} 
&+ 
\frac{\sin{2\kappa t}}{2\kappa}  \int_M \diver{u_0}(x) \, d\mu       \\ 
&\qquad\qquad + 
\frac{\sin^2{\kappa t}}{4\kappa^2} \int_M (\diver{u_0}(x))^2 \, d\mu    
= \mu(M)\,,
\end{align*}
where we used formula \eqref{kappadef} for $\kappa$ and the fact that 
the integral of the divergence of $u_0$ vanishes. 
It follows that 
$$
\int_M \frac{\partial \varphi}{\partial t}(t,x) \, d\mu = 0
$$ 
which allows one to solve the equation 
$\Laplacian f = -\partial \varphi/\partial t$ 
for any fixed time $t$ and thus produce a function of two variables, e.g., 
\begin{equation}\label{fdef}
f(t,x) = f_1(x) \cos{2\kappa t} + f_2(x) \sin{2\kappa t}, 
\end{equation}
where $f_1$ and $f_2$ are smooth functions on $M$. 

Since for any $t$ in $[0,T_{\max})$ the function $\varphi(t,x)$ is strictly positive 
one can define a time-dependent vector field by 
$X(t,x) = \grad f(t,x)/\varphi(t,x)$. 
Letting $t \mapsto \xi(t)$ denote the flow of $X$ starting at the identity  
and using Cartan's formula and the definition of $f$ we now compute 
$$
\frac{d}{d t} \xi^*(\varphi \mu) 
= 
\xi^*\left( \frac{\partial \varphi}{\partial t}\mu  + \Lie_{X}(\varphi\mu)\right) = 0
$$
since $\Lie_X(\varphi \mu) = \diver{(\varphi X)}\mu$. 
Observing that $\varphi(0)= 1$ and $\xi(0)=e$ we conclude that 
$\xi^*(\varphi \mu) = \mu$ for any $0\leq t <T_{\max}$.
Since each $\xi(t)$ is a diffeomorphism, letting $\eta(t)$ denote its inverse 
we find that $\eta^*\mu = \varphi \mu$, 
from which it follows that $\Jac_\mu(\eta(t,x)) = \varphi(t,x)$ as desired. 
\end{proof} 

The method of Proposition \ref{moser} gives a particular choice of a diffeomorphism flow $\eta$ 
and hence a velocity field appearing in \eqref{generalizedhuntersaxton} 
and satisfying $\diver{u}=\varphi$. 
The flow must break down at the critical time $T_{\max}$ since the vector field $X$ 
becomes singular (when $\varphi$ reaches zero). 
The difficulty here is that one constructs $\eta$ indirectly, by first constructing $\xi=\eta^{-1}$,
and it is this inversion procedure that breaks down at the blowup time $T_{\max}$. 


\subsection{Heuristics beyond blow-up time} \label{Heuristics}
The geometric picture developed in Section \ref{aczero} helps to gain some insight 
into the blowup mechanism. 

The only functions in the image of the map 
$\Phi\colon \DiffM\to S^{\infty}_r$ 
constructed in Theorem \ref{isometricHSsphere} are those that are positive everywhere 
and the result of Moser \cite{moser} on transitivity of the action of diffeomorphisms on densities
shows that these are all such functions. 
But everywhere-positive functions form a relatively small open subset of $S^{\infty}_r$ 
and so any great circle that emanates from this set 
will eventually reach the antipodal point corresponding to a function 
which is everywhere negative (and thus not a Jacobian). 
On the other hand, the Jacobian of the flow $\eta$ is the \emph{square} 
of a function on the sphere (and hence it may still make sense). 

\begin{example} \upshape 
A simple toy model of this phenomenon can be constructed 
in finite-dimensional geometric statistics, see \cite{AN}. 
Consider a sample space of three outcomes $\{a,b,c\}$ 
with probabilities $P(a)=p$, $P(b)=q$ and $P(c)=r$ where $p+q+r=1$. 
The corresponding statistical model $\mathcal{S}$ is two-dimensional 
(its local coordinates are $p$ and $q$). 
Using the discrete analogue of the Fisher-Rao metric \eqref{fisherrao} 
the map 
$(p, q)\mapsto \big( 2\sqrt{p}, 2\sqrt{q},2\sqrt{1-p-q} \big)$ 
becomes an isometric embedding of $\mathcal{S}$ 
into the sphere $S^2$ in $\mathbb{R}^3$ of radius $2$.\footnote{Probability spaces 
are mapped into the positive octant of the sphere.} 
Consider a geodesic in $S^2$ starting at the uniform distribution $p=q=r=\frac{1}{3}$ 
\begin{equation}\label{spheregeod}
\gamma(t) 
= 
\left( \tfrac{2}{\sqrt{3}} \cos{t} 
+ 
\sqrt{2} \sin{t}, \tfrac{2}{\sqrt{3}} \cos{t} 
- 
\sqrt{2} \sin{t}, \tfrac{2}{\sqrt{3}} \cos{t} 
\right). 
\end{equation}

\begin{figure}
\bigskip\bigskip
     \begin{minipage}{0.45\textwidth}
 \begin{overpic}[width=\textwidth]{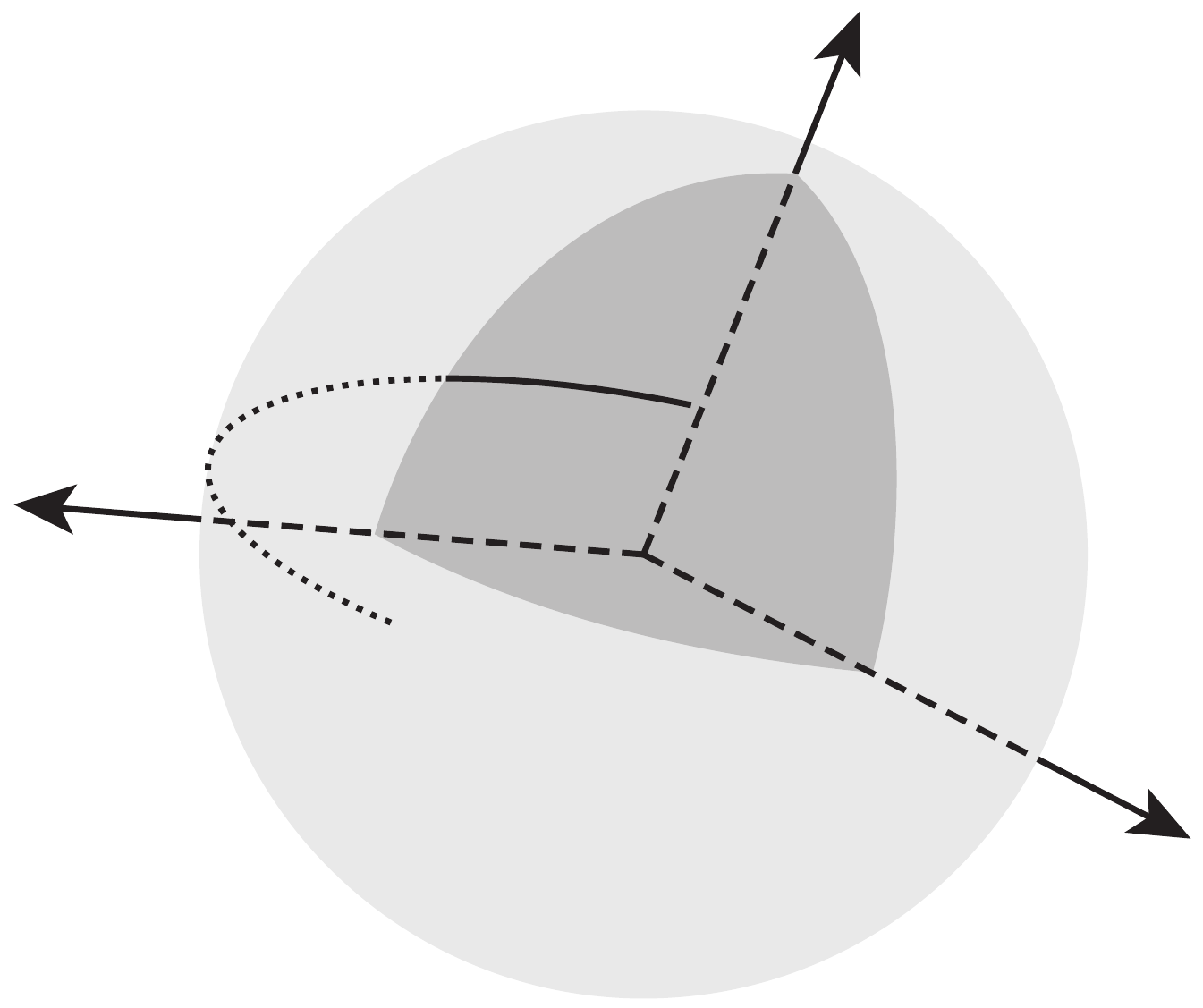}
      \put(3,33){\large $x$}
      \put(93,8){\large $y$}
      \put(72,86){\large $z$}
    \end{overpic}
     \end{minipage}
     \hspace{0.05\textwidth}
 \begin{minipage}{0.45\textwidth}
 \begin{overpic}[width=0.92\textwidth]{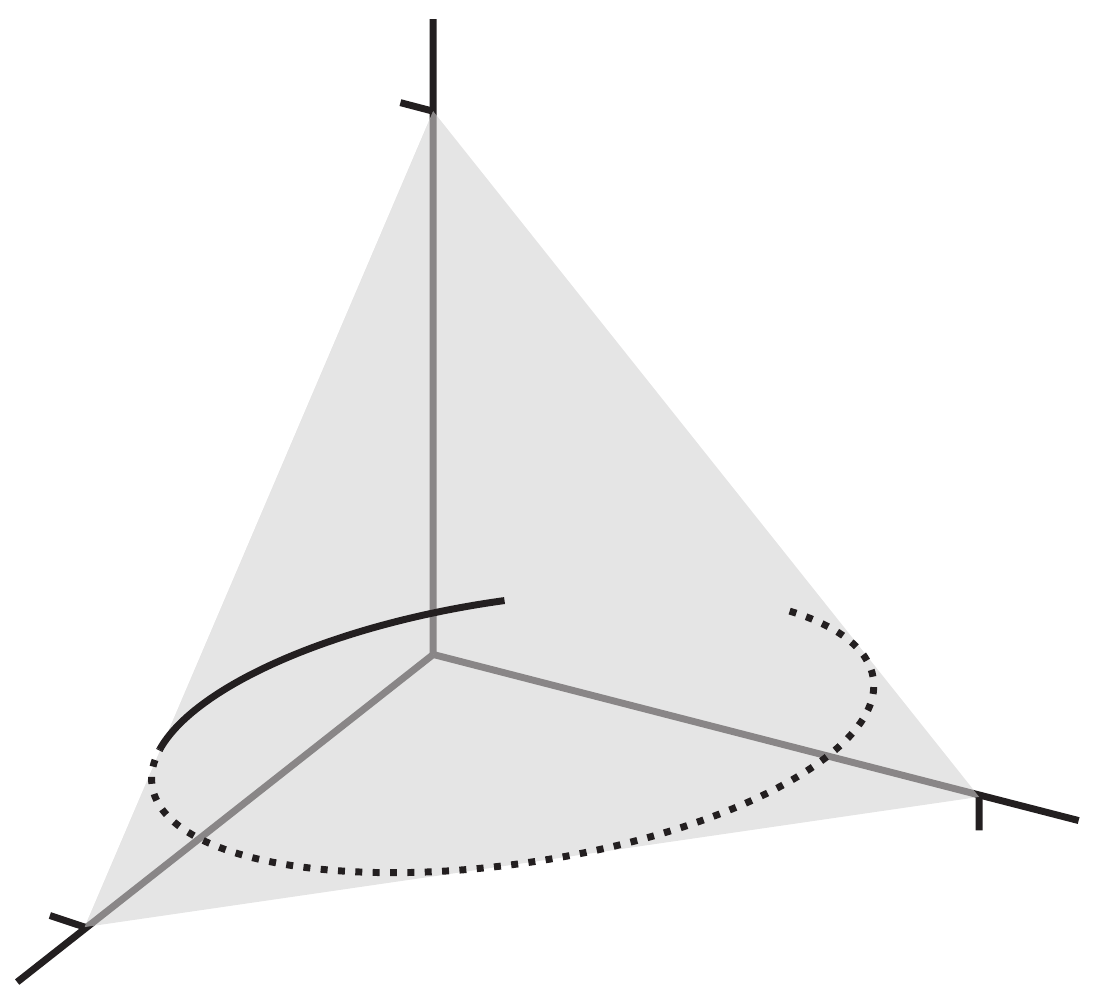}
      \put(8,0){\large $P(a)$}
      \put(90, 22){\large $P(b)$}
      \put(43,84){\large $P(c)$}
      \put(0,6){\large $1$}
      \put(88.3,9){\large $1$}
      \put(32,80){\large $1$}
    \end{overpic}
      \end{minipage}
\bigskip \bigskip    \begin{figuretext}\label{circlesphere}
The picture on the left shows a shaded region on a sphere representing the positive octant---the image of the square-root map on the probability space. A typical spherical geodesic leaves 
this region in finite time. 
The picture on the right shows a geodesic in the probability space given by squaring each component; 
when the geodesic hits the boundary it ``bounces off the wall.''
              \end{figuretext} 
\end{figure}

The corresponding probabilities are 
\begin{align}\label{probgeod} 
P(a) = \tfrac{1}{4} \left( \tfrac{2}{\sqrt{3}} \cos{t} + \sqrt{2} \sin{t} \right)^2, 
\quad  
&P(b) = \tfrac{1}{4} \left( \tfrac{2}{\sqrt{3}} \cos{t} - \sqrt{2} \sin{t} \right)^2,   
\nonumber  \\ 
&P(c) = \tfrac{1}{3} \cos^2{t}
\end{align}
Thus ``global'' geodesics of the Fisher-Rao metric ``bounce off the walls''
of the convex triangle as the individual probabilities approach zero and then increase again;
see Figure \ref{circlesphere}.
\end{example} 

\begin{example} \upshape 
For the Hunter-Saxton equation on $\Diff(S^1)/\Rot(S^1)$ a similar phenomenon 
was explained in \cite{L3}. 
In this case 
the flow $\eta$ is determined (up to rotations of the base point) by its Jacobian. 
Hence, spherical geometry ``selects'' a unique (weak) global solution 
$\eta(t)\in \Diff(S^1)/\Rot(S^1)$ which is always smooth. 
If the initial velocity is not constant in any interval, then the singularities of $\eta$ are isolated 
so that $\eta$ is a homeomorphism (but not a diffeomorphism past the blowup time). 
This may be interpreted as ``elastic collisions'' of particles (energy is conserved). 
\end{example} 

In higher dimensions if the Jacobian is not everywhere positive the situation 
is much more complicated. 
Nevertheless, in this case it may be possible to apply the techniques of 
Gromov and Eliashberg \cite{GE} or Cupini et al.~\cite{CDK} in order to construct a map with a prescribed Jacobian. Namely, let $\varphi(t,x)$ be defined by the right side of \eqref{varphi}. 
Then, for any fixed time $t$, there is a map 
$\eta(t, x)$ of $M$ such that 
$\Jac_\mu \eta(t,x) = \varphi(t,x)$ 
even if $\varphi(t,x)$ is zero at some $x$. 
Furthermore, using for example results of \cite{CDK}, 
the map $\eta$ can be chosen to be $C^{\infty}$ smooth provided that $\varphi$ is  smooth 
so that by 
formula \eqref{lagrangianhuntersaxtonsoln} 
one may define a ``global flow'' in the space $C^{\infty}(M, M)/\DiffmuM$. 
It would be interesting to extend Moser's argument to construct 
a global flow of homeomorphisms out of this flow of maps (past the blowup time).

\subsection{The $\dot{H}^1$-exponential map of $\DiffM/\DiffmuM$}\label{fredholm}

We next describe the structure of singularities of the exponential map 
of our right-invariant $\dot{H}^1$-metric on the space of densities.

First, recall from Proposition \ref{diameter} that the diameter of $\VolM$ 
with respect to the metric \eqref{multiHSmetric} is equal to $\pi\sqrt{\mu(M)}/2$. 

\begin{corollary}
Any geodesic in $\VolM=\DiffM/\DiffmuM$ starting from the reference density 
is free of conjugate points.
\end{corollary}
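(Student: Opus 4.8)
The plan is to push the question onto the round sphere through the isometry $\Phi$ of Theorem~\ref{isometricHSsphere} and then combine the elementary description of conjugate points on a sphere with the diameter bound of Proposition~\ref{diameter}. Conjugate points are an intrinsic Riemannian notion---zeros of Jacobi fields along a geodesic---and are therefore preserved by the isometry $\Phi$. So it suffices to treat geodesics of the open subset $U = \Phi(\VolM) \subset S^\infty_r$ of strictly positive functions that issue from the image $\Phi(\mu) = 1$ of the reference density.

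First I would locate the conjugate points on $S^\infty_r$ itself. By Corollary~\ref{cor:sphere} the induced metric has constant sectional curvature $1/r^2$ with $r = \sqrt{\mu(M)}$, so along a unit-speed great circle the normal Jacobi equation is $J'' + r^{-2}J = 0$, whose solutions vanishing at $s=0$ next vanish at arc length $s = \pi r$. Hence the first point conjugate to $1$ along any geodesic of the full sphere occurs at the antipode, at distance $\pi r = \pi\sqrt{\mu(M)}$. The task is then to show that a geodesic of $\VolM$ from the reference density never reaches this distance while it remains in $U$.

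This is where Proposition~\ref{diameter} enters. A great circle on $S^\infty_r$ is length-minimizing up to its antipode, i.e.\ for arc length $s < \pi r$; therefore, as long as such an arc $\gamma|_{[0,s]}$ stays inside $U$ and $s<\pi r$, its length $s$ equals the spherical distance from $1$ to $\gamma(s)$, which by Theorem~\ref{distance} is exactly $\dist_{\dot H^1}(\mu, \nu_s)$ for the corresponding density $\nu_s \in \VolM$. But Proposition~\ref{diameter} bounds this distance by $\diam_{\dot H^1}\VolM = \pi\sqrt{\mu(M)}/2$. Consequently every geodesic of $\VolM$ issuing from the reference density is confined to arc lengths $s \le \pi r/2$, strictly below the first conjugate distance $\pi r$, and so carries no conjugate points. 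As a sanity check one can write the geodesic explicitly as $\gamma(s) = \cos(s/r) + r\sin(s/r)\,w$ with $w$ a mean-zero unit function; since $w$ must take negative values, $\gamma(s)$ loses positivity---and the geodesic leaves $U$---already at some arc length below $\pi r/2$.

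The only genuinely delicate point---though a minor one---is that $\VolM$ corresponds to a proper open subset $U$ of the sphere rather than the whole of it, so I must ensure that a geodesic cannot be prolonged within $U$ all the way to the conjugate distance. The diameter estimate settles exactly this: the conjugate locus sits at distance $\pi r$, comfortably beyond the maximal reach $\pi r/2$ of geodesics inside $U$, leaving no room for a conjugate point to occur.
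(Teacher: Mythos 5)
Your proof is correct and follows essentially the same route as the paper: transport the problem to the sphere $S^\infty_r$ via the isometry $\Phi$, note that the first conjugate point along a great circle is the antipode at distance $\pi\sqrt{\mu(M)}$, and use the diameter bound $\pi\sqrt{\mu(M)}/2$ from Proposition~\ref{diameter} to conclude that no geodesic arc lying in the image of $\VolM$ can reach it. The extra care you take in identifying arc length with the $\dot H^1$-distance is a welcome (if minor) elaboration of the paper's one-line justification.
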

\begin{proof} 
Indeed, for any geodesic on a sphere the first conjugate point to the initial point is its antipode. 
However, 
the great arcs contained in the image of the space of densities $\VolM$ 
under the isometry  $\Phi$ constructed in Theorem \ref{isometricHSsphere} 
cannot contain antipodal points along them 
because the diameter of this image is $\pi\sqrt{\mu(M)}/2$. 
\end{proof} 

Using the techniques of \cite{MP} one can show that the Riemannian exponential map
of \eqref{multiHSmetric} on $\VolM$ is a nonlinear Fredholm map. 
In other words, its differential is a bounded Fredholm operator 
(on suitable Sobolev completions of tangent spaces) 
of index zero 
for as long as the solution is defined. 
The fact that this is true for the general right-invariant $a$-$b$-$c$ metric 
given at the identity by \eqref{abcmetric} on $\DiffM$ or $\DiffmuM$ 
also follows from the results of \cite{MP}. 
More precisely, we have the following 

\begin{theorem} 
For a sufficiently large Sobolev index $s>n/2 + 1$ the Riemannian exponential map of 
\eqref{multiHSmetric} on the quotient $\Diff^s(M)/\Diff_\mu^s(M)$ 
of the $H^s$ completions 
is Fredholm up to the blowup time $t=T_{\max}$ given in \eqref{blowuptime}.
\end{theorem}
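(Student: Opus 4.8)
The plan is to transfer the whole question to the round sphere through the isometry $\Phi$ of Theorem \ref{isometricHSsphere}. For $s>n/2+1$ that map is simultaneously a Riemannian isometry for \eqref{multiHSmetric} and a diffeomorphism between $\Diff^s(M)/\Diff^s_\mu(M)$ and the open set of strictly positive functions in $S^\infty_r\cap H^{s-1}(M)$; consequently it sends geodesics to great circles and conjugates the two Riemannian exponential maps. Writing $f_0=\Phi([e])=1$, one has $\Phi\circ\exp^{\VolM}_{[e]}=\exp^{S}_{f_0}\circ\Phi_{*[e]}$ on the relevant neighbourhood, and by Theorem \ref{isometricHSsphere} the differential $\Phi_{*[e]}$ is a bounded isomorphism of the $H^{s-1}$ tangent spaces. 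Hence the exponential map of \eqref{multiHSmetric} is Fredholm of index zero exactly when the sphere exponential $\exp^{S}_{f_0}$, restricted to $\{w\in H^{s-1}(M):\int_M f_0 w\,d\mu=0\}$, is. First I would record, using the techniques of \cite{MP} and \cite{Ebin-Marsden}, that the geodesic equation $\ddot\gamma=-r^{-2}\|\dot\gamma\|_{L^2}^2\,\gamma$ of the (weak) $L^2$ metric on $S^\infty_r$ loses no derivatives, so that $\exp^{S}_{f_0}$ is a genuine $C^\infty$ map between the Sobolev--Hilbert manifolds and its differential is a bounded operator on $H^{s-1}(M)$.

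The second step is a direct computation of $D\exp^{S}_{f_0}$ from the explicit great--circle formula
$$
\exp^{S}_{f_0}(u)=\cos\Big(\tfrac{|u|}{r}\Big)f_0+\frac{r}{|u|}\sin\Big(\tfrac{|u|}{r}\Big)u,
\qquad |u|:=\|u\|_{L^2}.
$$
Differentiating in $u$ and splitting an arbitrary $w$ into its $L^2$--components along and orthogonal to $u$, the orthogonal part is merely scaled by $r|u|^{-1}\sin(|u|/r)$, while the parallel part is sent into $\mathrm{span}\{f_0,u\}$. Collecting terms I expect to obtain
$$
D\exp^{S}_{f_0}\big|_u=\frac{r\sin(|u|/r)}{|u|}\,\mathrm{Id}+F_u,
$$
where $F_u$ is the rank--one operator $w\mapsto \langle u,w\rangle_{L^2}\big(c_1 f_0+c_2 u\big)$ built from the $L^2$--projection onto $u$, with scalar coefficients $c_1,c_2$ depending only on $|u|$. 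Since $u,f_0\in H^{s-1}(M)$ and $w\mapsto\langle u,w\rangle_{L^2}$ is a bounded functional on $H^{s-1}(M)$, the operator $F_u$ is bounded on $H^{s-1}$ and of finite rank.

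The conclusion is then immediate: whenever the scalar $r|u|^{-1}\sin(|u|/r)$ is nonzero, $D\exp^{S}_{f_0}|_u$ is a nonzero multiple of the identity plus a compact (finite--rank) perturbation, hence Fredholm of index zero on $H^{s-1}(M)$, and pulling back through $\Phi_{*[e]}$ gives the same for \eqref{multiHSmetric} on the quotient. It remains to verify that the scalar stays nonzero up to the blow-up time. Along the geodesic issuing from $[e]$ with velocity $u_0$ one has, by Remark \ref{greatcircles} and \eqref{kappadef}, $\Phi_{*[e]}u_0=\tfrac12\diver u_0$ with $\|\Phi_{*[e]}u_0\|_{L^2}=\kappa r$; thus at parameter $t$ the argument is $|tu_0|/r=t\kappa$ and the scalar equals $\sin(t\kappa)/(t\kappa)$. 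This first vanishes at $t=\pi/\kappa$, whereas \eqref{blowuptime} together with $\arctan(\cdot)<\pi/2$ forces $T_{\max}<\pi/\kappa$; so the scalar is strictly positive throughout $(0,T_{\max})$. This reproduces, and quantifies, the earlier observation that geodesics of $\VolM$ carry no conjugate points before $T_{\max}$.

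I would expect the only genuinely delicate point to be the first one: justifying that $\Phi$ intertwines the exponential maps at the strong $H^{s-1}$ level, rather than merely in the ambient $L^2$ topology, i.e. that the spray is smooth and $\exp^{S}$ is a bona fide $C^\infty$ map of Hilbert manifolds up to $T_{\max}$. This is precisely the situation handled by the methods of \cite{MP} and \cite{Ebin-Marsden}; once it is in place, the Fredholm property is a one-line consequence of the fact that $D\exp^{S}$ is a scalar plus a rank-one operator, which is an especially transparent manifestation of the constant positive curvature established in Corollary \ref{cor:sphere}.
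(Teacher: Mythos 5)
Your argument is correct, but it proceeds by a genuinely different route than the paper. The paper invokes the abstract perturbation machinery of \cite{MP}: along a geodesic $\eta(t)=\exp_e(tu_0)$ one writes $\exp_{e\ast tu_0}=t^{-1}dL_{\eta(t)}\Psi(t)$, where $\Psi(t)$ solves the integral equation \eqref{lambdaK} with $\Lambda=\Ad_\eta^*\Ad_\eta$, and Fredholmness follows because $w\mapsto B(u_0,w)$ is compact, so that $\Psi(t)$ is a compact perturbation of the invertible operator $\int_0^t\Lambda(\tau)^{-1}d\tau$; the paper does not repeat the details and simply checks the hypotheses. You instead push the entire question through the isometry $\Phi$ of Theorem \ref{isometricHSsphere} and differentiate the explicit great-circle formula, obtaining $D\exp^S_{f_0}\big|_u=\frac{r\sin(|u|/r)}{|u|}\,\mathrm{Id}+F_u$ with $F_u$ of rank one, so that Fredholmness of index zero reduces to the non-vanishing of the scalar $\sin(t\kappa)/(t\kappa)$, which you correctly verify on $[0,T_{\max})$ from $T_{\max}<\pi/\kappa$ via \eqref{blowuptime}. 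Your computation is right (including the normalization $\|\Phi_{*[e]}u_0\|_{L^2}=\kappa r$ from \eqref{kappadef}), and the only point requiring care --- that $\Phi$ intertwines the two exponential maps at the $H^{s-1}$ level, which holds because $\Phi_{*[e]}u=\tfrac12\diver u$ is a bounded isomorphism onto the mean-zero $H^{s-1}$ functions and the great circles manifestly preserve $H^{s-1}$ regularity --- is exactly the one you flag; one should also note that the domain and codomain of $D\exp^S_{f_0}|_u$ are two different codimension-one subspaces of $H^{s-1}(M)$, so the ``scalar plus finite rank'' bookkeeping for the index must account for the inclusion and corestriction, though the conclusion (index zero) is unaffected. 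What your approach buys is transparency: it locates the failure of Fredholmness precisely at the antipodal conjugate point $t=\pi/\kappa$ and quantifies why blowup occurs strictly before it, tying the theorem directly to Corollary \ref{cor:sphere} and Proposition \ref{diameter}. What the paper's approach buys is generality: the perturbation argument applies verbatim to the full $a$-$b$-$c$ metric \eqref{abcmetric} on $\DiffM$ and $\DiffmuM$, where no isometry onto a round sphere is available.
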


The proof of Fredholmness given in \cite{MP} is based on perturbation techniques. 
The basic idea is that the derivative of the exponential map along any geodesic 
$t\mapsto \eta(t) = \exp_e(tu_0)$ can be expressed as 
$ 
\exp_{e\ast tu_0} = t^{-1} dL_{\eta(t)} \Psi(t),
$
where $\Psi(t)$ is a time dependent operator satisfying the equation 
\begin{equation}\label{lambdaK}
\Psi(t) 
= 
\int_0^t \Lambda(\tau)^{-1}\,d\tau 
+ 
\int_0^t \Lambda(\tau)^{-1} B\big(u_0, \Psi(\tau)\big) \, d\tau 
\end{equation}
and where $\Lambda = \Ad_{\eta}^*\Ad_{\eta}$ (as long as $t <T_{\max})$.
If the linear operator $w\mapsto B(u_0,w)$ is compact for any sufficiently smooth $u_0$ 
then $\Psi(t)$ is Fredholm being a compact perturbation of 
the invertible operator defined by the integral 
$\int_0^t \Lambda(\tau)^{-1}\,d\tau$. 
In the same way one can check that this is indeed the case for 
the homogeneous space of densities 
with the right-invariant metric \eqref{multiHSmetric}. 
We will not repeat the argument here and refer to \cite{MP} for details. 

\begin{remark}\upshape 
We emphasize that the perturbation argument described above 
works only for sufficiently short geodesic segments in the space of densities. 
Recall that for the round sphere in a Hilbert space 
the Riemannian exponential map cannot be Fredholm for sufficiently long geodesic
because any geodesic starting at one point has a conjugate point of infinite order 
at the antipodal point. 
In the case of the metric \eqref{multiHSmetric} on the space of densities 
one checks that 
$\lVert \Lambda(t)^{-1}\rVert \nearrow \infty$  as $t\nearrow T_{\max}$
since it depends on the $C^1$ norm of $\eta$ via the adjoint representation.
Therefore the argument of \cite{MP} breaks down here past the blowup time as equality \eqref{lambdaK} 
becomes invalid.\footnote{It is tempting to interpret this phenomenon as 
the infinite multiplicity of conjugate points on the Hilbert sphere \emph{forcing} 
the classical solutions of \eqref{generalizedhuntersaxton}
to break down before the conjugate point is reached.} 
\end{remark}


\section{The general Euler-Arnold equation of the $a$-$b$-$c$ metric}
\label{abcmetricsection}
\nequation

In this section we compute the general Euler-Arnold equation for 
the $a$-$b$-$c$ metric \eqref{abcmetric} on the full diffeomorphism group $\Diff(M)$. 
We will be particularly interested in the degenerate cases 
when $a=0$ and either $b=0$ or $c=0$ 
because they lead naturally to nondegenerate metrics 
on the volumorphism group $\Diffmu(M)$ 
or the homogeneous space of densities $\VolM$ respectively. 

It is convenient to proceed with the derivation of the 
Euler-Arnold equation in the language 
of differential forms. We begin with a brief comment regarding the notation. 
Recall that the exterior derivative operator $d$ on $k$-forms 
and its adjoint $\delta$ are related by 
$$
\delta \beta = (-1)^{nk +1}*d*\beta, 
\qquad 
\beta \in \Omega^{k+1}(M),
$$
where the Hodge star operator $\ast$ is defined with respect to 
our fixed Riemannian volume element $\mu$ on $M$ 
by the formula 
$
\alpha \wedge \ast \beta 
= 
\langle \alpha, \beta \rangle \, \mu
$
for any $\alpha$ and $\beta$ in $\Omega^k(M)$. 
As usual, the symbols $\flat$ and $\sharp = \flat^{-1}$ denote the isomorphisms 
between vector fields and one-forms induced by the Riemannian metric on $M$. 
While we use $d$ and $\delta$ notations throughout, 
we will continue to employ the more familiar formulas when available. 
For example, in any dimension we have 
$\delta u^{\flat} = -\diver{u}$, 
if $n=1$ then $du^{\flat}=0$, while if $n=3$ then $du^{\flat} = \ast\curl{u}^{\flat}$ 
and $\delta du^{\flat} = (\curl^2{u})^{\flat}$ for any vector field, etc. 
In particular, in dimension $n=3$ the $a$-$b$-$c$ metric \eqref{abcmetric} 
assumes the following special form
$$
\llangle u,v\rrangle 
= 
a \int_M \langle u,v\rangle \, d\mu 
+ 
b\int_M \diver{u}\, \diver{v} \, d\mu 
+ 
c\int_M \langle \curl{u}, \curl{v}\rangle \, d\mu. 
$$
In dimension $n=2$ we have $\curl{u}$ defined by the formula 
$du^{\flat} = (\curl{u}) \mu$ 
and therefore 
$\langle \curl{u}, \curl{v}\rangle=\curl{u}\, \curl{v}$ 
is understood as a product of functions. 
For $n=1$ the metric \eqref{abcmetric} simplifies to
$$
\llangle u,v\rrangle 
= a\int_{S^1} uv \,dx + b \int_{S^1} u_x v_x \, dx \,.
$$

Recall also that the (regular) dual $T_e^*\Diff(M)$ of the Lie algebra $T_e\Diff(M)$ 
admits the orthogonal Hodge decomposition\footnote{Orthogonality of the components 
in \eqref{hodgedecomposition} is established for suitable Sobolev completions 
with respect to the induced metric on differential forms 
$\llangle \alpha^{\sharp}, \beta^{\sharp} \rrangle$.} 
\begin{equation}\label{hodgedecomposition}  
T_e^*\Diff(M) = d\Omega^0(M) \oplus \delta \Omega^2(M) \oplus \mathcal{H}^1,
\end{equation} 
where $\Omega^{k}(M)$ and $\mathcal{H}^k$ denote the spaces of smooth $k$-forms 
and harmonic $k$-forms on $M$, respectively; see Figure \ref{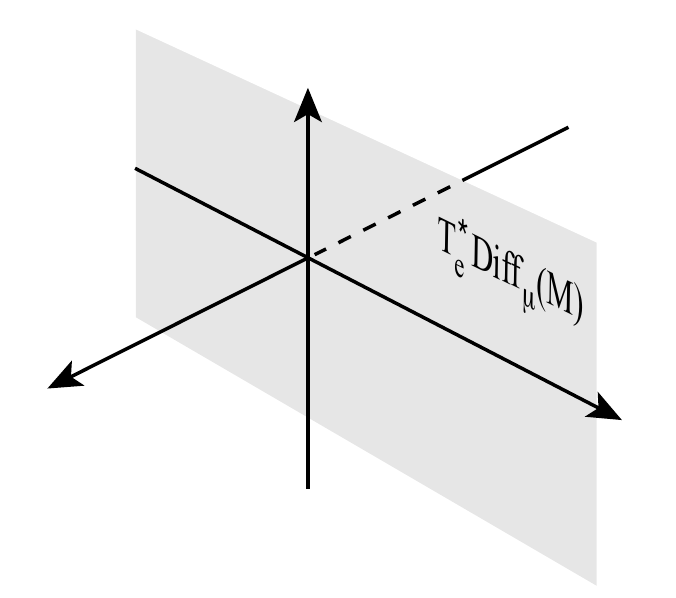}.

\begin{figure} 
\begin{center}
  \begin{overpic}[width=.6\textwidth]{hodge.pdf}
    \put(43,78){\large $\mathcal{H}^1$}
    \put(-12,18){\large $b$-term}
    \put(-13,25){\large $d\Omega^0(M)$}
     \put(95,23){\large $\delta \Omega^2(M)$}
     \put(96,17){\large $c$-term}
 \end{overpic}
\begin{figuretext} \label{hodge.pdf}
The Hodge decomposition of $T_e^*\Diff(M)$. 
The $b$ and $c$ terms of the metric (\ref{abcmetric}) 
are nondegenerate along the axes indicated in the figure.
\end{figuretext}
\end{center}
\end{figure}

We now proceed to derive the Euler-Arnold equation of 
the $a$-$b$-$c$ metric \eqref{abcmetric}. 
Let $A:T_e\DiffM \to T_e^*\DiffM$ be the self-adjoint elliptic operator 
\begin{equation} \label{ellA}
Av = av^\flat + b d \delta v^\flat + c \delta dv^b 
\end{equation} 
(the inertia operator) so that 
\begin{equation}\label{Adef}
\llangle u, v \rrangle = \int_M \langle Au, v \rangle \, d\mu, 
\end{equation}
for any pair of vector fields $u$ and $v$ on $M$.


%

\begin{proposition}\label{generalBcomputation}
The Euler-Arnold equation of the Sobolev $a$-$b$-$c$ metric \eqref{abcmetric} 
on $\DiffM$ has the form 
\begin{align} \label{generalgeodesic}
Au_t 
= 
-a\left( (\diver{u}) \, u^\flat + \iota_u du^\flat + d \langle u, u \rangle \right) 
&- b\left( (\diver{u}) \, d \delta u^\flat  + d \iota_u d \delta u^\flat \right)
\nonumber	\\
&-  c \left( (\diver{u}) \, \delta du^\flat  + \iota_u d\delta du^\flat 
+ 
d\iota_u \delta du^\flat \right) 
\end{align}
where $A$ is given by \eqref{ellA}. 
\end{proposition}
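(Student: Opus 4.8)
The plan is to recast \eqref{generalgeodesic} as the momentum form of the Euler--Arnold equation \eqref{utBuu} and then unwind it term by term. Since the inertia operator $A$ of \eqref{ellA} is linear and independent of $t$, we have $Au_t = \partial_t(Au)$, so applying $A$ to $u_t = -B(u,u) = -\ad_u^* u$ and using the definitions \eqref{opB} and \eqref{Adef} it suffices to establish the equivalent momentum equation
\[
\partial_t(Au) = -\ad_u^*(Au),
\]
where now $\ad_u^{*}\colon T_e^*\DiffM \to T_e^*\DiffM$ denotes the coadjoint operator on the regular dual, characterized by $\int_M \langle \ad_u^* m, w\rangle\, d\mu = \int_M \langle m, \ad_u w\rangle\, d\mu$ for all vector fields $w$. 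Here $m = Au$ is a one-form and the pairing is taken with respect to the \emph{fixed} reference volume $\mu$. The entire computation then reduces to two ingredients: a clean formula for $\ad_u^*$ acting on a one-form, followed by Cartan's magic formula applied to the three pieces of $Au = au^\flat + b\,d\delta u^\flat + c\,\delta du^\flat$.

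The key step is to show that for any one-form $m$,
\[
\ad_u^* m = \Lie_u m + (\diver u)\, m .
\]
To see this I would start from the defining identity above, use $\ad_u w = -[u,w] = -\Lie_u w$, and integrate by parts against $\mu$. Applying the contraction rule $\iota_{[u,w]}m = \Lie_u(\iota_w m) - \iota_w \Lie_u m$ turns $-\int_M m([u,w])\, d\mu$ into $\int_M (\Lie_u m)(w)\, d\mu - \int_M \Lie_u\big(m(w)\big)\, d\mu$; since $\int_M \Lie_u f\, d\mu = -\int_M f\,(\diver u)\, d\mu$ by Stokes' theorem (this is precisely where the $(\diver u)m$ correction is born, as the pairing is against $\mu$ rather than an invariant pairing), the second integral produces $\int_M (\diver u)\, m(w)\, d\mu$, and comparing with the arbitrary test field $w$ yields the formula. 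I expect this to be the main obstacle of the proof: one must keep the sign convention $\ad_u v = -[u,v]$ and the role of the fixed $\mu$ straight, as a misplaced sign or a dropped divergence term would corrupt every line of \eqref{generalgeodesic}.

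With the coadjoint formula in hand, the remainder is a termwise application of $\Lie_u = d\iota_u + \iota_u d$ together with $\partial_t(Au) = -\Lie_u(Au) - (\diver u)(Au)$. For the $a$-term, $\iota_u u^\flat = \langle u, u\rangle$ gives $\Lie_u u^\flat = d\langle u,u\rangle + \iota_u du^\flat$, producing $-a\big((\diver u)u^\flat + \iota_u du^\flat + d\langle u,u\rangle\big)$. For the $b$-term, since $d\delta u^\flat$ is exact the contraction $\iota_u d(d\delta u^\flat)$ vanishes, so $\Lie_u(d\delta u^\flat) = d\iota_u d\delta u^\flat$, giving $-b\big((\diver u)\,d\delta u^\flat + d\iota_u d\delta u^\flat\big)$. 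For the $c$-term both halves of Cartan's formula survive, $\Lie_u(\delta du^\flat) = d\iota_u \delta du^\flat + \iota_u d\delta du^\flat$, giving $-c\big((\diver u)\,\delta du^\flat + \iota_u d\delta du^\flat + d\iota_u \delta du^\flat\big)$. Adding the three contributions reproduces \eqref{generalgeodesic}. The functional-analytic justification (that these identities hold on the $H^s$ completions and that $\ad_u^*$ is well-defined on the regular dual via the Hodge decomposition \eqref{hodgedecomposition}) is routine in the sense emphasized in the introduction, and I would only remark on it rather than carry it out in detail.
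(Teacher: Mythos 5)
Your proof is correct and follows essentially the same route as the paper: your identity $\ad_u^*m=\Lie_um+(\diver u)\,m$ is exactly the paper's formula \eqref{generalBop} specialized to $v=u$, since $d\langle Au,u\rangle+\iota_u dAu=\Lie_u(Au)$ by Cartan's formula, and both arguments obtain the $(\diver u)$ correction from the same integration by parts against the fixed volume $\mu$. The termwise expansion of the $a$-, $b$-, $c$-pieces (including $\iota_u d(d\delta u^\flat)=0$) matches the paper's concluding step.
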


\begin{proof} 
By definition \eqref{opB} of the bilinear operator $B$, 
for any vectors $u, v$ and $w$ in $T_e \DiffM$ we have 
\begin{equation}\label{BdefA} 
\llangle B(u,v), w\rrangle 
= 
\llangle u, \ad_vw\rrangle = -\int_M \langle Au, [v,w]\rangle \, d\mu.
\end{equation}
Integrating over $M$ the following identity 
$$ 
\langle Au, [v,w]\rangle 
= 
\big\langle d \langle Au,w\rangle, v\big\rangle 
- 
\big\langle d \langle Au, v\rangle, w \big\rangle 
- 
dAu(v, w) 
$$ 
and using 
$$
\int_M \big\langle d \langle Au,w\rangle, v\big\rangle \, d\mu 
= 
-\int_M  \langle Au, w\rangle \diver{v} \, d\mu 
$$
we get 
$$ 
\llangle u, \ad_v w \rrangle 
= 
\int_M 
\big\langle 
(\diver{v}) Au + d \langle Au, v\rangle 
+  \iota_vdAu, w 
\big\rangle \, d\mu.
$$

On the other hand, we have 
$$
\llangle B(u,v), w\rrangle = \int_M \langle AB(u,v), w\rangle \, d\mu 
$$ 
and, since $w$ is an arbitrary vector field on $M$, 
comparing the two expressions above, we obtain 
\begin{equation}\label{generalBop}
B(u,v) 
= 
A^{-1}\big( (\diver{v})Au + d\langle Au, v\rangle 
+ \iota_vdAu\big).
\end{equation}

Setting $v=u$, isolating the coefficients $a$, $b$, and $c$, 
and using \eqref{utBuu} yields the equation \eqref{generalgeodesic}. 
The simplification in the $b$ term comes from $d^2=0$.
\end{proof}

\begin{remark} \label{rem:EAeqns} \upshape 
Special cases of the Euler-Arnold equation \eqref{generalgeodesic} 
include several well-known evolution PDE. 
\begin{itemize}
\item For $n=1$ and $a=0$, we obtain the Hunter-Saxton equation 
\begin{equation*}  
u_{txx} + 2u_x u_{xx} + uu_{xxx} = 0. 
\end{equation*} 
\item For $n=1$ and $b=0$, we get the (inviscid) Burgers equation 
\begin{equation*}  
u_t + 3uu_x=0. 
\end{equation*} 
\item For $n=1$ and $a=b=1$, we obtain the Camassa-Holm equation 
\begin{equation*}  
u_t - u_{txx} + 3uu_x - 2u_x u_{xx} - u u_{xxx} = 0.
\end{equation*} 
\item For any $n$ when $a=1$ and $b=c=0$ we get the multi-dimensional 
(right-invariant) Burgers equation 
\begin{equation*}  \label{riBurgers} 
u_t + \nabla_u u + u\diver{u} + \frac{1}{2}\grad \lvert u\rvert^2 = 0; 
\end{equation*} 
(it is sometimes rewritten using 
$(\iota_udu^{\flat})^{\sharp} = \nabla_uu - \frac{1}{2} \grad \lvert u\rvert^2$ 
and referred to as the template matching equation), 
see \cite{HMA}. 
\item For any $n$ and $a=b=c=1$ we get the EPDiff equation
\begin{equation*} 
m_t + \mathcal{L}_um + m\diver u = 0, 
\qquad 
m = u^\flat - \Delta u^\flat, 
\end{equation*} 
see e.g., \cite{HMR}.
\end{itemize}
\end{remark}

Now observe that if $a=0$ then the $a$-$b$-$c$ metric 
becomes degenerate and can only be viewed as a (weak) Riemannian metric 
when restricted to a subspace. 
There are three cases to consider. 
\begin{enumerate}
\item $a=0, b\ne 0, c=0$: 
the metric is nondegenerate on the homogeneous space $\VolM=\Diff(M)/\Diffmu(M)$ 
which can be identified with the space of volume forms or densities on $M$. 
This is our principal example of the paper, 
studied in Sections \ref{aczero}, \ref{statistics}, \ref{geodesics} and \ref{global}. 
\item $a=0, b=0, c\ne 0$: 
the metric is nondegenerate on the group of (exact) volumorphisms 
and the Euler-Arnold equation is \eqref{exactogeodesic}, 
see Corollary \ref{euler-like}  
and Remark \ref{rem:longtime} below.
\item $a=0, b\ne 0, c\ne 0$: 
the metric is nondegenerate on the orthogonal complement of the harmonic fields. 
This is neither a subalgebra nor the complement of a subalgebra in general 
and thus the approach of taking the quotient modulo a subgroup 
developed in the other cases cannot be applied here. 
However, in the special case when $M$ is the flat torus $\mathbb{T}^n$ 
the harmonic fields are the Killing fields 
which 
do form a subalgebra (whose subgroup $\Isom(\mathbb{T}^n)$ consists of the isometries). 
In this case we get a genuine Riemannian metric on the homogeneous space 
$\Diff(\mathbb{T}^n)/\Isom(\mathbb{T}^n)$. 
\end{enumerate}
%

In cases (1) and (3) above one needs to make sure that the degenerate (weak Riemannian) metric 
descends to a non-degenerate metric on the quotient. This can be verified using 
the general condition \eqref{descent} in Proposition \ref{KhesinMisiolekprop}. 

The corollary below contains Proposition \ref{generalizedhuntersaxtonprop} 
which was stated earlier without proof. 

\begin{corollary} \label{descentmetricvolum}
If $a=c=0$ then the $a$-$b$-$c$ metric \eqref{abcmetric} 
satisfies condition \eqref{descent} and therefore 
descends to a metric on the space of densities $\VolM$. 
The corresponding Euler-Arnold equation is 
\begin{equation}\label{homgeodesic}
d\diver{u}_t + \diver{u}\, d\diver{u} + d \iota_u (d\diver{u}) = 0 
\end{equation}
or, in the integrated form, 
\begin{equation} \tag{\ref{generalizedhuntersaxton}}
\rho_t + \langle u, \nabla \rho \rangle 
+ 
\tfrac{1}{2} \rho^2 
=
- \frac{\int_M \rho^2 \, d\mu}{2\mu(M)} 
\end{equation}
where $\rho = \diver{u}$. 
\end{corollary}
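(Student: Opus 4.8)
The plan is to establish the three assertions in turn: first that the degenerate metric descends to $\VolM$, then the differential (one-form) version \eqref{homgeodesic} of the Euler-Arnold equation, and finally its integrated scalar form \eqref{generalizedhuntersaxton}.

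For the descent I would invoke Proposition \ref{KhesinMisiolekprop} together with Remark \ref{rem:descent}. When $a=c=0$ the inner product \eqref{abcmetric} reduces to $b\int_M \diver u\,\diver v\,d\mu$, which vanishes identically on $T_e\Diffmu(M)$, the divergence-free fields. Hence the metric is degenerate \emph{precisely} along $H=\Diffmu(M)$, and by Remark \ref{rem:descent} the condition \eqref{descent} collapses to plain $\Ad_H$-bi-invariance: for all vector fields $u,v$ and every $w$ with $\diver w=0$ one needs $\llangle \ad_w u,v\rrangle+\llangle u,\ad_w v\rrangle=0$. The one computational input is the identity $\diver[w,u]=w(\diver u)-u(\diver w)$, which follows from $\Lie_{[w,u]}\mu=[\Lie_w,\Lie_u]\mu$; when $\diver w=0$ it gives $\diver(\ad_w u)=-w(\diver u)$. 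Substituting, the left-hand side becomes $-b\int_M w(\diver u\cdot \diver v)\,d\mu$, and integrating by parts while using $\diver w=0$ makes it vanish. This settles the first claim, and the factor $b$ is plainly immaterial to it.

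For the Euler-Arnold equation I would specialize Proposition \ref{generalBcomputation}. With $a=c=0$ the inertia operator \eqref{ellA} reduces to $Au=b\,d\delta u^\flat=-b\,d\diver u$, so only the $b$-line of \eqref{generalgeodesic} survives. Writing $\rho=\diver u$ and using $d\delta u^\flat=-d\rho$, both sides of \eqref{generalgeodesic} carry a common factor $b$ which cancels, and the equation becomes exactly \eqref{homgeodesic}, namely $d\rho_t+\rho\,d\rho+d\iota_u(d\rho)=0$.

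The final step, and to my mind the only genuinely non-formal point, is the integration. I would observe that $\rho\,d\rho=d(\tfrac{1}{2}\rho^2)$ and $\iota_u d\rho=\langle u,\grad\rho\rangle$, so \eqref{homgeodesic} reads $d\big(\rho_t+\langle u,\grad\rho\rangle+\tfrac{1}{2}\rho^2\big)=0$; thus the bracketed scalar is spatially constant, equal to some $C(t)$ depending only on time. This is the exact analogue of the pressure term in the incompressible Euler equations, and pinning $C(t)$ down is the crux. To identify it I would integrate over $M$: the term $\int_M \rho_t\,d\mu=\tfrac{d}{dt}\int_M \diver u\,d\mu$ vanishes on the closed manifold, while $\int_M \langle u,\grad\rho\rangle\,d\mu=-\int_M \rho^2\,d\mu$ after integration by parts (using $\diver u=\rho$). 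Solving $-\tfrac{1}{2}\int_M \rho^2\,d\mu=C(t)\,\mu(M)$ yields $C(t)=-\frac{\int_M \rho^2\,d\mu}{2\mu(M)}$, which is precisely the right-hand side of \eqref{generalizedhuntersaxton}, completing the proof.
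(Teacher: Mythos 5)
Your proposal is correct and follows essentially the same route as the paper: the descent is checked by verifying the skew-symmetry condition \eqref{descent} via the identity $\diver[w,u]=w(\diver u)-u(\diver w)$ and an integration by parts against the divergence-free field $w$, the equation \eqref{homgeodesic} is read off from the $b$-line of \eqref{generalgeodesic}, and the constant $C(t)$ is fixed by integrating over $M$. Your treatment of the last step is in fact slightly more explicit than the paper's, which simply remarks that integrating over $M$ determines $C(t)$.
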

\begin{proof} 
We verify \eqref{descent} for $G=\DiffM$, $H=\DiffmuM$ 
and $\ad_wv = -[w,v]$, where $[\cdot, \cdot]$ is the Lie bracket of vector fields on $M$. 
Given any vector fields $u,v$ and $w$ with $\diver{w} = 0$, we have 
\begin{align*}
\llangle \ad_wv, u\rrangle_{\dot{H}^1} + \llangle v,  \ad_wu\rrangle_{\dot{H}^1} 
&= -b\int_M \big( 
\diver{[w,v]} \diver{u} + \diver{[w,u]} \diver{v} 
\big) \, d\mu \\  
&= -b\int_M \Big(
\big( \langle w , \nabla\diver{v} \rangle 
- 
\langle v, \nabla\diver{w} \rangle \big) \diver{u}  \\
&\qquad\quad\quad\quad +
\big( \langle w, \nabla\diver{u} \rangle 
- 
\langle u, \nabla\diver{w} \rangle \big) \diver{v} 
\Big) \, d\mu \\
&= b\int_M \diver{w}\cdot  \diver{v} \cdot\diver{u} \, d\mu  = 0 \,,
\end{align*}
which shows that \eqref{abcmetric} descends to 
$\Diff(M)/\Diff_\mu(M)$.\footnote{Note that the same proof works for the quotient space 
$G/H=\Diff(\mathbb{T}^n)/\Isom(\mathbb{T}^n)$.}

The Euler-Arnold equation on the quotient can be now obtained 
from \eqref{generalgeodesic} in the form (\ref{homgeodesic}). 
In integrated form it reads 
$$ 
\diver{u_t} + u( \diver{u}) + \tfrac{1}{2} (\diver{u})^2 = C(t) 
$$
where $C(t)$ may in general depend on time. 
Integrating this equation over $M$ determines the value of $C(t)$. 
%
\end{proof}
\smallskip

We now return to the nondegenerate $a$-$b$-$c$ metric ($a\not=0$) and 
restrict it to the subgroup of volumorphisms (or exact volumorphisms), cf. Figure \ref{hodge.pdf}. Observe that one obtains the corresponding Euler-Arnold equations  with $b=0$ 
directly from \eqref{generalgeodesic} using appropriate Hodge projections. 

\begin{corollary} \label{projectiongeodesic} 
The Euler-Arnold equation of the $a$-$b$-$c$ metric \eqref{abcmetric} 
restricted to the subgroup $\Diffmu(M)$ has the form 
\begin{equation}\label{volumegeodesicexplicit}
au_t^\flat + c\delta d u_t^\flat + a \iota_udu^\flat +  c \iota_ud\delta du^\flat 
= 
d\Delta^{-1}\delta \left( a \iota_u du^\flat + c \iota_u d\delta du^\flat \right). 
\end{equation} 
\end{corollary}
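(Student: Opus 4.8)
The plan is to obtain \eqref{volumegeodesicexplicit} by projecting the full Euler-Arnold equation \eqref{generalgeodesic} onto the subalgebra of divergence-free fields, and the whole argument rests on two structural facts about the inertia operator $A$ of \eqref{ellA}. Recall that $T_e\Diffmu(M)$ consists of the fields $u$ with $\delta u^\flat = -\diver u = 0$, i.e. those whose one-forms lie in the coexact-plus-harmonic part $\delta\Omega^2(M)\oplus\mathcal{H}^1$ of the Hodge decomposition \eqref{hodgedecomposition}. As in Proposition \ref{KhesinMisiolekprop}, restricting a right-invariant metric to a subgroup turns the Euler-Arnold equation into $u_t = -\Pi B(u,u)$, where $B$ is the operator \eqref{generalBop} of the ambient group and $\Pi$ is the orthogonal projection onto $T_e\Diffmu(M)$ taken with respect to the restricted metric. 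So the task is to compute this projection explicitly.

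First I would identify $\Pi$ with the $L^2$-Hodge projection. A one-line integration by parts shows that under the $a$-$b$-$c$ inner product \eqref{abcmetric} an exact field $u^\flat = df$ is orthogonal to every divergence-free $v$: the $a$-term reduces to a multiple of $\int_M f\,\delta v^\flat\,d\mu = 0$, the $b$-term vanishes because $\delta v^\flat = 0$, and the $c$-term vanishes because $d\,df = 0$. Hence the gradient/divergence-free splitting is $\llangle\cdot,\cdot\rrangle$-orthogonal, and the metric projection $\Pi$ coincides on one-forms with the Hodge projection $\mathrm{id} - d\Delta^{-1}\delta$ onto coexact-plus-harmonic forms.

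Next I would pass to momentum form. The key computation is that $A$ preserves each summand of \eqref{hodgedecomposition}: on exact forms $A(df) = d(af + b\Delta f)$, on coexact forms $A(\delta\beta) = a\,\delta\beta + c\,\delta d\delta\beta$, and on harmonic forms $A = a\cdot\mathrm{id}$; thus $A$ commutes with $\Pi$ and, for $a\neq 0$, restricts to a bijection of the divergence-free subspace onto itself. Equivalently, one may view the projected equation in the familiar ``pressure'' form $Au_t = -F(u) + dp$, where $F(u) = (\diver u)Au + d\langle Au,u\rangle + \iota_u dAu$ is the momentum-side right-hand side read off from \eqref{generalBop} and the exact correction $dp = d\Delta^{-1}\delta F(u)$ is chosen so that $u_t$ remains divergence-free. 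Setting $\diver u = 0$ kills the $b$-contributions and the $(\diver u)$-prefixed terms, leaving $Au = au^\flat + c\,\delta du^\flat$, so that $\iota_u dAu = a\,\iota_u du^\flat + c\,\iota_u d\delta du^\flat$ while $d\langle Au,u\rangle$ is exact. Applying $\Pi = \mathrm{id} - d\Delta^{-1}\delta$ removes the exact piece $d\langle Au,u\rangle$ and, using $Au_t = au_t^\flat + c\,\delta du_t^\flat$, yields exactly \eqref{volumegeodesicexplicit}.

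The step I expect to be the main obstacle is the clean justification of the passage between the velocity-level identity $u_t = -\Pi B(u,u)$ and the momentum-level Hodge projection, namely the commutation $A\Pi = \Pi A$ together with the verification that the ``pressure'' correction is precisely the exact part $d\Delta^{-1}\delta F(u)$. One must also keep track of the harmonic fields, which are divergence-free and are fixed up to the factor $a$ by $A$, so they are carried along by $\Pi$ without obstructing the argument; and one should note that $a\neq 0$ is what makes $A$ invertible on the divergence-free subspace, so that the restricted metric is genuinely nondegenerate there.
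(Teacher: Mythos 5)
Your proposal is correct and follows essentially the same route as the paper: project $B(u,u)$ from \eqref{generalBop} with the Hodge projection $P=I-d\Delta^{-1}\delta$, use $\delta u^\flat=0$ to kill the $b$- and divergence-prefixed terms, and use the commutation of $P$ with the inertia operator $A$ to pass to the momentum form $Au_t=-P\iota_u dAu$. The only difference is that you explicitly verify two facts the paper takes for granted---that the metric-orthogonal projection coincides with the Hodge projection, and that $A$ preserves each Hodge summand---which is a welcome but not essential elaboration.
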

\begin{proof} 
Let $P$ denote the orthogonal Hodge projection of 
(Sobolev completions of) the tangent space $T_e\DiffM$ onto the tangent space 
$T_e\DiffmuM$ to the subgroup at the identity.  
In this case the Euler-Arnold equation reads\footnote{Similar computations 
in the general case of a projection of an Euler-Arnold equation to a subgroup 
can be found in \cite{MP} or \cite{taylor}.} 
$$
u_t = - P B(u,u) = -P A^{-1} \left( \delta u^\flat Au +  d ( Au(u) ) + \iota_u dAu \right) 
$$ 
where we used \eqref{generalBop}. 

Since $\delta u^\flat =0$ and since the Hodge projection 
$P = I - d\Delta^{-1}\delta$ 
commutes with the (inertia) operator $A$ we have 
$$
Au_t = -P \iota_u dAu 
= 
-\iota_u dAu + d\Delta^{-1}\delta \iota_u dAu 
$$
which is an equivalent form of \eqref{volumegeodesicexplicit}. 
\end{proof} 

One should also mention that 
\eqref{volumegeodesicexplicit}
is also the Euler-Arnold equation when the $a$-$b$-$c$ metric \eqref{abcmetric} is restricted to 
the set of exact volumorphisms $\Diffmuex(M)$, i.e. the subgroup of $\DiffM$ 
whose tangent space $T_e \Diffmuex(M)$ at the identity consists of 
vector fields $v$ of the form 
$v^\flat = \delta\beta$, 
where $\beta$ is a $2$-form on $M$. 
In this case the corresponding orthogonal projection is 
$P = \delta\Laplacian^{-1} d$. 

\begin{remark} \label{rem:euler-a} \upshape 
It is interesting to note that the Euler-Arnold equation \eqref{volumegeodesicexplicit} 
is closely related to the $H^1$ Euler-$\alpha$ equation 
\begin{equation} \label{euleralfa} 
(u^{\flat}+\alpha^2\delta du^{\flat})_t 
+ 
\mathcal{L}_u (u^{\flat} + \alpha^2 \delta du^{\flat}) = 0, 
\qquad\quad 
(\alpha \in \mathbb{R}) 
\end{equation} 
which was proposed as a model for large-scale motions 
by Holm, Marsden and Ratiu \cite{HMR}. 
\end{remark} 

There is also a ``degenerate analogue'' of the latter equation which corresponds to the case 
where $a=b=0$:

\begin{corollary} \label{euler-like} 
The Euler-Arnold equation of the right-invariant metric \eqref{abcmetric} 
with $a=b=0$ 
on the subgroup of exact volumorphisms is 
\begin{equation}\label{exactogeodesic} 
\delta du^\flat_t + P\mathcal{L}_u (\delta  du^\flat) = 0, 
\end{equation}
where $P$ is the orthogonal Hodge projection onto 
$\delta\Omega^2(M)$. 
\end{corollary}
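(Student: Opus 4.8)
The plan is to mirror the derivation of Corollary~\ref{projectiongeodesic}, with the essential caveat that for $a=b=0$ the metric \eqref{abcmetric} is degenerate on all of $\DiffM$ and becomes a genuine (weak) Riemannian metric only after restriction to the exact volumorphisms $\Diffmuex(M)$, whose Lie algebra is $T_e\Diffmuex(M)=\{v:\ v^\flat=\delta\beta\}=(\delta\Omega^2(M))^\sharp$. First I would record that the inertia operator \eqref{ellA} reduces to $Au=c\,\delta du^\flat$ and examine its action on the Hodge decomposition of one-forms: it annihilates $d\Omega^0(M)\oplus\mathcal{H}^1$ and acts as $c\Delta$ on the co-exact summand $\delta\Omega^2(M)$, where it is invertible. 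In particular $A$ preserves each Hodge summand and therefore commutes with the orthogonal projection $P=\delta\Delta^{-1}d$ onto $\delta\Omega^2(M)$; these two observations are exactly what render the restricted metric nondegenerate and the equation well posed.

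Next I would compute the coadjoint term intrinsically from $\llangle u,\ad_v w\rrangle=-\llangle u,[v,w]\rrangle$ exactly as in the proof of Proposition~\ref{generalBcomputation}, obtaining for $u,v,w\in T_e\Diffmuex(M)$ that
$$
\llangle u, \ad_v w\rrangle
=
\int_M \big\langle (\diver v)\,Au + d\langle Au,v\rangle + \iota_v dAu,\ w\big\rangle\,d\mu .
$$
Testing against all $w\in\delta\Omega^2(M)$ and using that $AB(u,v)$ already lies in $\delta\Omega^2(M)$ shows that the bilinear operator $B$ of the restricted metric satisfies $AB(u,v)=P\big((\diver v)\,Au + d\langle Au,v\rangle + \iota_v dAu\big)$. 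Setting $v=u$ and invoking $\diver u=0$ for a volumorphism kills the first term, leaving $AB(u,u)=P\big(d\langle Au,u\rangle+\iota_u dAu\big)$.

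The crux is to recognize the surviving expression as a Lie derivative. Writing $m=Au=c\,\delta du^\flat$ and noting $\iota_u m=\langle Au,u\rangle$, Cartan's magic formula gives $d\langle Au,u\rangle+\iota_u dAu=d\iota_u m+\iota_u dm=\mathcal{L}_u m$. Hence $AB(u,u)=P\mathcal{L}_u m$, and applying the time-independent operator $A$ to the Euler-Arnold equation $u_t=-B(u,u)$ turns it into the momentum form $m_t=Au_t=-P\mathcal{L}_u m$. Since $m=c\,\delta du^\flat$, dividing by $c$ yields precisely \eqref{exactogeodesic}. Consistently, because $P$ annihilates $d\Omega^0(M)$ the exact term $d\langle Au,u\rangle$ drops out, so $P\mathcal{L}_u m=P\iota_u dm$ and no pressure-type term survives.

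I expect the only delicate point to be the bookkeeping around $A$ and $P$ forced by the degeneracy of the metric: since $A$ has a large kernel one cannot invert it on $T_e\DiffM$, so it is essential to verify that $\delta\Omega^2(M)$ is $A$-invariant with $A|_{\delta\Omega^2}=c\Delta$ invertible and that $P$ commutes with $A$, which is what legitimizes passing to the momentum variable $m=c\,\delta du^\flat$ and reading off \eqref{exactogeodesic}. Everything else is the routine integration-by-parts computation already carried out for \eqref{generalBop}.
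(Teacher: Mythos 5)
Your proof is correct and follows essentially the same route as the paper, which simply sets $a=b=0$ in Equation \eqref{generalgeodesic}, uses $\diver u=0$, recognizes $\iota_u d\delta du^\flat + d\iota_u\delta du^\flat = \mathcal{L}_u(\delta du^\flat)$ via Cartan's formula, and projects onto $\delta\Omega^2(M)$. Your extra care about the degeneracy of $A$, its invariance on the Hodge summands, and the commutation with $P$ is a sound (and more explicit) justification of the same computation.
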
 
\begin{proof} 
Set $a=b=0$ in Equation \eqref{generalgeodesic} and project onto $\delta \Omega^2(M)$. 
\end{proof} 

\begin{remark}\label{rem:longtime} \upshape 
Observe that the nonlinear PDE in \eqref{exactogeodesic} 
has a simpler structure than the Euler-$\alpha$ equation \eqref{euleralfa}. 
In fact, one can view equation \eqref{exactogeodesic} as a formal limit of 
\eqref{euleralfa} when $\alpha\nearrow\infty$. 
Furthermore, it seems likely that establishing global well-posedness for either equation 
requires overcoming very similar technical obstacles 
and we expect that standard techniques will be sufficient to obtain 
global well-posedness of \eqref{exactogeodesic} in the two-dimensional case. 
\end{remark}


\section{The space of metrics and the diffeomorphism group} 
\label{sec:Met-orbits} 
\nequation

Our aim in this section is primarily motivational. 
Apart from the fact that the Euler-Arnold equations of $H^1$ metrics 
yield a number of interesting evolution equations 
of mathematical physics discussed above
there is also a purely geometric reason to study them. 
Below, we  show that right-invariant Sobolev metrics 
of the type  studied in this paper arise naturally on orbits of 
the diffeomorphism group acting on the space of all Riemannian metrics and volume forms on $M$. 
First we collect a few basic facts in Section \ref{subsec:efg}. 

\subsection{Weak $L^2$ structures on the spaces of metrics and volume forms} 
\label{subsec:efg} 
Our main references for the constructions recalled in this subsection are
\cite{be, ebin, fg}. 

Given a compact manifold $M$ consider the set $\Met(M)$ of 
all (smooth) Riemannian metrics on $M$. 
This set acquires in a natural way the structure of 
a smooth Hilbert manifold.\footnote{Indeed, the closure of $C^{\infty}$ metrics 
in any Sobolev $H^s$ norm with $s>n/2$ is an open subset of $H^s(S^2T^\ast M)$.} 

The group $\Diff(M)$ acts on $\Met(M)$ by pull-back 
$g\mapsto \pullbackg(\eta)=\eta^*g$ 
and 
there is a natural geometry on $\Met(M)$ which is invariant under this action. 
If $g$ is a Riemannian metric and $A, B$ are smooth sections of 
the tensor bundle $S^2T^\ast M$, then the expression 
\begin{equation}\label{metricofmetrics} 
\llangle A, B\rrangle_g = \int_M \Tr_g(AB) \, d\mu_g 
\end{equation}
defines a (weak Riemannian) $L^2$-metric on $\Met(M)$. 
Here $\mu_g$ is the volume form of $g$ and 
the trace is defined by 
$
\Tr_g(AB) 
:=
\Tr \big( g^{-1}A \, g^{-1}B \big)\,. 
$
This metric is invariant under the action of $\Diff(M)$, see \cite{ebin}.

The space $\mathrm{Vol}(M)$ of all (smooth) volume forms on $M$ 
also carries a natural (weak Riemannian) $L^2$-metric  
\begin{equation}\label{volumemetric}
\llangle \alpha,\beta\rrangle_{\nu} 
= 
\frac{4}{n} \int_M \frac{d\alpha}{d\nu}\, \frac{d\beta}{d\nu} \, d\nu,
\end{equation}
where $\nu \in \mathrm{Vol}(M)$ and $\alpha, \beta$ are smooth $n$-forms 
and which appeared already in the paper \cite{fg}.\footnote{The space $\mathrm{Vol}(M)$ 
of volume forms on $M$ contains the codimension 1 submanifold
 $\VolM\subset \mathrm{Vol}(M)$ of those forms whose total volume is normalized.}
It is also invariant under the action of $\Diff(M)$ by pull-back 
$\mu \to \pullbackmu(\eta)=\eta^\ast\mu$. 

There is a map $\Volumizer \colon \MetM \to \mathrm{Vol}(M)$ 
which assigns to a Riemannian metric $g$ the volume form $\mu_g$. 
Its derivative
$\Volumizer_{*g}\colon T_g\MetM \to T_{\Volumizer(g)}\mathrm{Vol}(M)$ 
is 
$$
\Volumizer_{*g}(A) = \Tr_{\Volumizer(g)}A = \Tr \big( g^{-1}A\big)\,.
$$
One checks that $\Volumizer$ is a Riemannian submersion in the normalization 
of \eqref{volumemetric}.
Furthermore, for any $g$ in $\Met(M)$ there is a map 
$\iota_g\colon \mathrm{Vol}(M) \times \{g\} \to \MetM$ given by 
$$
\iota_g(\nu) = \Big(\frac{d\nu}{d\mu_g}\Big)^{2/n} g \,,
$$
which is an isometric embedding. 

For any $\mu\in \mathrm{Vol}(M)$ the inverse image $\MetmuM=\Xi^{-1}[\mu]$ 
can be given a structure of a submanifold in the space of Riemannian metrics 
whose volume form is $\mu$. 
Its tangent space at $g$ consists of symmetric bilinear forms $A$ with $\Tr_g(A)=0$. 
The metric \eqref{metricofmetrics} induces a metric 
on $\MetmuM$, which turns it into a globally symmetric space. 

The natural action on $\MetmuM$ is again given by pull-back by elements of 
the group $\Diffmu(M)$. 
The triple of $\MetM$ (total space) together with $\MetmuM$ (fibre) 
and $\mathrm{Vol}(M)$ (base) forms a trivial fiber bundle, 
even though \eqref{metricofmetrics} is not a product metric.

Sectional curvature of 
the metric \eqref{metricofmetrics} on $\Met(M)$ was computed in \cite{fg} 
and found to be nonpositive. 
The corresponding sectional curvature of $\MetmuM$ is also nonpositive. 
On the other hand, the space $\mathrm{Vol}(M)$ 
equipped with $L^2$-metric \eqref{volumemetric} turns out to be flat. 

We now explain how these structures relate to our paper. 

\subsection{Induced $H^1$ metrics on orbits of the diffeomorphism group} 
\label{subsec:induced} 

Observe that the pull-back actions of $\Diff(M)$ on $\Met(M)$ and $\mathrm{Vol}(M)$ 
(and similarly, the action of $\Diffmu(M)$ on $\MetmuM$) 
leave the corresponding metrics \eqref{metricofmetrics} and \eqref{volumemetric} invariant. 
This allows one to construct geometrically natural right-invariant metrics 
on the orbits of a (suitably chosen) metric or volume form. 

We first consider the action of the full diffeomorphism group $\DiffM$ 
on the space of Riemannian metrics $\MetM$. 

\begin{proposition}\label{fulldiffeoorbit}
If $g\in\Met(M)$ has no nontrivial isometries then the map
$\pullbackg\colon \Diff(M)\to \Met(M)$
is an immersion and the metric \eqref{metricofmetrics} 
induces a right-invariant metric on $\Diff(M)$ given at the identity by 
\begin{equation}\label{deformationmetric}
\llangle u, v \rrangle 
= 
\llangle \Lie_ug, \Lie_vg\rrangle_g
\end{equation}
for any $u,v\in T_e \Diff(M)$.
\end{proposition}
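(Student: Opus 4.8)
The plan is to reduce the statement to a single computation of the differential of $\pullbackg$ combined with the $\Diff(M)$-invariance of the $L^2$ metric \eqref{metricofmetrics} recalled above. First I would compute this differential at an arbitrary $\eta\in\Diff(M)$. Writing a tangent vector as $U=u\circ\eta$ for the vector field $u=U\circ\eta^{-1}$ on $M$, and letting $\psi_t$ be the flow of $u$ so that $\eta_t=\psi_t\circ\eta$ represents $U$, the factorization $(\psi_t\circ\eta)^{*}g=\eta^{*}\psi_t^{*}g$ gives
\[
(\pullbackg)_{*\eta}(U)
=\frac{d}{dt}\Big|_{t=0}(\psi_t\circ\eta)^{*}g
=\eta^{*}\!\left(\frac{d}{dt}\Big|_{t=0}\psi_t^{*}g\right)
=\eta^{*}(\Lie_u g).
\]
In particular $(\pullbackg)_{*e}(u)=\Lie_u g$ at the identity.

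For the immersion claim I would show that $(\pullbackg)_{*\eta}$ is injective for every $\eta$. Since $\eta^{*}$ is a linear isomorphism on symmetric $2$-tensors, the kernel of $(\pullbackg)_{*\eta}$ coincides with the kernel of $u\mapsto\Lie_u g$, which is precisely the space of Killing fields of $g$. On a compact manifold the isometry group $\Isom(M)$ of $g$ is a (compact) Lie group whose Lie algebra is exactly this Killing space; the hypothesis that $g$ admits no nontrivial isometries therefore forces the Killing space to be trivial. Hence $\Lie_u g=0$ implies $u=0$, and the differential is injective at every $\eta$.

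For the induced metric I would pull \eqref{metricofmetrics} back along $\pullbackg$ and evaluate it in right-trivialized form. With $U=u\circ\eta$ and $V=v\circ\eta$, using the differential computed above together with $\pullbackg(\eta)=\eta^{*}g$,
\[
\llangle(\pullbackg)_{*\eta}U,(\pullbackg)_{*\eta}V\rrangle_{\eta^{*}g}
=\llangle\eta^{*}(\Lie_u g),\eta^{*}(\Lie_v g)\rrangle_{\eta^{*}g}
=\llangle\Lie_u g,\Lie_v g\rrangle_{g},
\]
where the last equality is the $\Diff(M)$-invariance $\llangle\eta^{*}A,\eta^{*}B\rrangle_{\eta^{*}g}=\llangle A,B\rrangle_{g}$ of \eqref{metricofmetrics}. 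The right-hand side is independent of $\eta$, which is exactly the assertion that the pullback metric is right-invariant, and setting $\eta=e$ recovers \eqref{deformationmetric}.

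Most of this is routine; the one step requiring genuine care is the immersion claim, where one must correctly identify the kernel of the linearization $u\mapsto\Lie_u g$ with the Killing algebra of $g$ and then invoke the no-isometries hypothesis. In the Sobolev framework one should additionally record that this kernel is closed and complemented, so that $\pullbackg$ is a bona fide immersion of Hilbert manifolds; this follows from the slice constructions of \cite{ebin} and \cite{Ebin-Marsden}, to which I would refer rather than reproduce the argument.
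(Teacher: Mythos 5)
Your proposal is correct and follows essentially the same route as the paper: compute the differential $(\pullbackg)_{*\eta}(u\circ\eta)=\eta^{*}(\Lie_u g)$, identify its kernel with the Killing fields (trivial by hypothesis), and deduce right-invariance from the $\Diff(M)$-invariance of the $L^2$ metric \eqref{metricofmetrics}. The only cosmetic difference is that the paper verifies the invariance identity $\llangle\eta^{*}A,\eta^{*}B\rrangle_{\eta^{*}g}=\llangle A,B\rrangle_{g}$ by an explicit change-of-variables computation, whereas you invoke it as the already-recorded fact from \cite{ebin}.
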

\begin{proof}
First, observe that the differential of the pull-back map $\pullbackg(\eta)$ 
with respect to $\eta$ is given by the formula 
$$ 
\mathcal{P}_{g\ast\eta}(v\circ \eta) = \eta^*(\Lie_vg), 
$$
for any $v\in T_e \Diff(M)$ and $\eta\in\Diff(M)$, 
where $\Lie_v$ stands for the Lie derivative. 
If $g$ has no nontrivial isometries then it has no Killing fields 
and therefore the differential $\mathcal{P}_{g\ast}$ is a one-to-one map.

It follows that the induced Riemannian metric on the orbit through $g$ 
can be computed at any $\eta \in \DiffM$ as 
\begin{align*} 
\llangle u\circ\eta, v\circ\eta\rrangle_{\eta} 
&= 
\llangle \mathcal{P}_{g\ast}(u\circ\eta), \mathcal{P}_{g\ast}(v\circ\eta) \rrangle_{\eta^*g}  \\ 
&= 
\llangle \eta^*\Lie_ug, \eta^*\Lie_vg \rrangle_{\eta^*g}
\end{align*} 
for any $u,v\in T_e\Diff(M)$. 
The induced metric is necessarily right-invariant. 
In fact, writing out explicitly the pull-back in the form 
$\eta^\ast h = D\eta \, (h\circ\eta) D\eta^\transpose$ 
for any $h$ in $\Met(M)$ and using the definition of $\Tr_g$ and the change of variables formula 
one readily checks that 
\begin{align*} 
\llangle \eta^*\Lie_ug, \eta^*\Lie_vg\rrangle_{\eta^*g} 
&= 
\int_{M} \Tr_{\eta^\ast g} \big( \eta^\ast \Lie_u g \, \eta^\ast \Lie_v g \big) \, d\mu_{\eta^\ast g} 
\\ 
&= 
\int_M \Tr 
\Big(  (\eta^\ast g)^{-1} \eta^\ast \Lie_u g \, (\eta^\ast g)^{-1} \eta^\ast \Lie_v g  \Big) 
\sqrt{ \det(\eta^\ast g)} \, dx   \\ 
&= 
\int_M \Tr_g \big( \Lie_u g \, \Lie_v g \big) d\mu_g   \\ 
&=
\llangle \Lie_ug, \Lie_vg\rrangle_g  
\end{align*} 
holds for any $\eta\in\Diff(M)$, which implies right-invariance. 
\end{proof}

\begin{remark} \upshape 
More generally, if $g$ has non-trivial isometries, then the above procedure 
yields a right-invariant metric on the homogeneous space $\Diff(M)/\IsogM$; 
see the diagram \eqref{bigpicturediagram} below. 
\end{remark} 

\begin{remark} \upshape 
The inner product \eqref{deformationmetric} can be rewritten in terms of 
the exterior differential $d$ and its adjoint $\delta$ so that at the identity we get 
\begin{equation}\label{hodgemetricform}
\begin{split}
\llangle u, v\rrangle 
&=
\int_M \langle \Lie_ug, \Lie_vg\rangle\, d\mu \\
&= 
2\int_M \langle du^{\flat}, dv^{\flat}\rangle \, d\mu 
+ 
4\int_M \langle \delta u^{\flat}, \delta v^{\flat}\rangle \, d\mu 
- 
4\int_M \Ric(u,v) \, d\mu,
\end{split}
\end{equation}
for any vector fields $u, v \in T_e\Diff(M)$ and where $\Ric$ stands for 
the Ricci curvature of $M$. 
If the metric $g$ is Einstein then 
$\Ric(u,v) = \lambda \langle u,v\rangle$ 
for some constant $\lambda$ 
and the induced metric in \eqref{deformationmetric} 
becomes a special case of the $a$-$b$-$c$ metric \eqref{abcmetric} 
with $a=-4\lambda$, $b=4$ and $c=2$.
\end{remark} 

In exactly the same manner we obtain an immersion of 
the volumorphism group $\Diffmu(M)$ into $\Metmu(M)$.

\begin{proposition}\label{volumorphismorbit}
If $g\in\MetmuM$ has no nontrivial isometries then the map
$\pullbackg\colon \Diffmu(M)\to \MetmuM$
is an immersion and \eqref{metricofmetrics} restricts to 
a right-invariant metric on $\Diffmu(M)$. 
\end{proposition}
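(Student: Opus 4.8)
The plan is to mirror the argument of Proposition~\ref{fulldiffeoorbit}, adapting it to the volume-preserving setting and paying attention to the fact that both the source and the target are now the submanifolds cut out by a fixed volume form. First I would verify that $\pullbackg$ really maps $\DiffmuM$ into $\MetmuM$: since $g\in\MetmuM$ means precisely $\mu_g=\mu$, and any $\xi\in\DiffmuM$ satisfies $\xi^*\mu=\mu$, the pulled-back metric has volume form $\mu_{\xi^*g}=\xi^*\mu_g=\xi^*\mu=\mu$, so $\xi^*g\in\MetmuM$ as required. Thus the orbit through $g$ stays inside $\MetmuM$ and the induced geometry is simply the restriction of \eqref{metricofmetrics}.

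Next I would compute the differential exactly as before, obtaining $\mathcal{P}_{g\ast\eta}(v\circ\eta)=\eta^*(\Lie_v g)$ for $v\in T_e\DiffmuM$, i.e. for divergence-free $v$. The key additional point, and the one requiring genuine verification, is that this image is tangent to the submanifold $\MetmuM$. Recall that $T_g\MetmuM$ consists of the symmetric $2$-tensors $A$ with $\Tr_g A=0$. Using the identity $\Tr_{\eta^*g}(\eta^*A)=(\Tr_g A)\circ\eta$ this reduces to checking $\Tr_g(\Lie_v g)=0$, which follows from the pointwise identity $\Tr_g(\Lie_v g)=2\diver v$ together with $\diver v=0$. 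This is the step I expect to be the main (and essentially the only) obstacle, though it is a short local computation.

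With the mapping property settled, immersivity is immediate: the differential annihilates $v\circ\eta$ precisely when $\Lie_v g=0$, that is, when $v$ is a Killing field of $g$; since $g$ has no nontrivial isometries it has no nonzero Killing fields, so $v=0$ and $\mathcal{P}_{g\ast}$ is injective on each tangent space (a fortiori on the subspace of divergence-free fields). Finally, right-invariance of the induced metric follows verbatim from the change-of-variables computation in Proposition~\ref{fulldiffeoorbit}, now carried out over divergence-free fields and using $\mu_{\eta^*g}=\mu$; the restricted metric agrees at the identity with $\llangle \Lie_u g,\Lie_v g\rrangle_g$, which would complete the proof.
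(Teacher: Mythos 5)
Your proposal is correct and follows essentially the same route as the paper, which simply asserts that the volumorphism case goes ``in exactly the same manner'' as Proposition \ref{fulldiffeoorbit}. The extra verifications you supply --- that $\xi^*g\in\MetmuM$ for $\xi\in\DiffmuM$ and that $\eta^*(\Lie_v g)$ is $\Tr$-free via $\Tr_g(\Lie_v g)=2\diver v$ --- are exactly the details the paper leaves implicit, and they are carried out correctly.
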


In this case \eqref{hodgemetricform} yields the formula
\begin{equation}\label{hodgevolume}
\llangle u,v\rrangle 
= 
2\int_M \langle du^{\flat}, dv^{\flat}\rangle \, d\mu - 4\int_M \Ric(u,v)\, d\mu,
\end{equation}
for any $u, v \in T_e \Diffmu(M)$.

\medskip

Finally, we perform an analogous construction for the action of $\Diff(M)$ 
on the space of volume forms $\mathrm{Vol}(M)$. 
In this case the isotropy subgroup is $\Diffmu(M)$ and 
we obtain a metric on the quotient space $\Diff(M)/\Diffmu(M)$. 

\begin{proposition}\label{volumeorbit}
If $\mu$ is a volume form on $M$ then the map 
$\pullbackmu\colon \Diff(M)\to \mathrm{Vol}(M)$ 
defines an immersion of the homogeneous space 
$\VolM$ into $\mathrm{Vol}(M)$ 
and the right-invariant metric induced by \eqref{volumemetric} 
has the form 
\begin{equation}\label{divdiv}
\llangle u, v\rrangle 
=
\llangle \Lie_u\mu, \Lie_v\mu \rrangle_\mu  
= 
\frac{4}{n} 
\int_M \diver{u} \cdot \diver{v} \, d\mu.
\end{equation}
\end{proposition}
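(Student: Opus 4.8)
The plan is to follow the same template used in the proof of Proposition \ref{fulldiffeoorbit}, replacing $\Met(M)$ and the metric \eqref{metricofmetrics} by $\mathrm{Vol}(M)$ and the metric \eqref{volumemetric}. First I would compute the differential of the pull-back map. Writing $\pullbackmu(\eta)=\eta^\ast\mu$ and factoring a variation of $\eta$ through the flow of a field $v$, one obtains
$$
\pullback_{\mu\ast\eta}(v\circ\eta) = \eta^\ast(\Lie_v\mu),
$$
in exact parallel with the formula $\pullback_{g\ast\eta}(v\circ\eta)=\eta^\ast(\Lie_vg)$ of the metric case. Cartan's formula together with the definition of divergence gives $\Lie_v\mu = (\diver v)\,\mu$, so the kernel of $\pullback_{\mu\ast e}$ consists precisely of the divergence-free fields, i.e.\ $T_e\DiffmuM$. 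Since $(\xi\circ\eta)^\ast\mu = \eta^\ast\mu$ whenever $\xi\in\DiffmuM$ (the computation already used in Theorem \ref{isometricHSsphere}), the isotropy of $\mu$ is exactly $\DiffmuM$, so $\pullbackmu$ is constant on the cosets of $\DiffmuM$ and descends to the quotient $\VolM$; the kernel computation then upgrades this to an immersion of $\VolM$ into $\mathrm{Vol}(M)$.

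Next I would identify the induced metric. Pulling \eqref{volumemetric} back by $\pullbackmu$ gives, at a point $\eta\in\DiffM$,
$$
\llangle u\circ\eta, v\circ\eta\rrangle_\eta = \llangle \eta^\ast\Lie_u\mu, \eta^\ast\Lie_v\mu\rrangle_{\eta^\ast\mu}.
$$
Here the right-invariance is immediate from the $\DiffM$-invariance of \eqref{volumemetric} already recorded in Section \ref{subsec:efg}: the Radon-Nikodym derivative is natural under pull-back, $\frac{d(\eta^\ast\alpha)}{d(\eta^\ast\nu)} = \big(\tfrac{d\alpha}{d\nu}\big)\circ\eta$, and the change of variables $\int_M (f\circ\eta)\,\eta^\ast\nu = \int_M f\,\nu$ then yields $\llangle \eta^\ast\Lie_u\mu, \eta^\ast\Lie_v\mu\rrangle_{\eta^\ast\mu} = \llangle \Lie_u\mu, \Lie_v\mu\rrangle_\mu$, which is independent of $\eta$.

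Finally I would evaluate at the identity. Since $\Lie_u\mu = (\diver u)\,\mu$, the derivative $\frac{d(\Lie_u\mu)}{d\mu}$ equals $\diver u$, and the definition \eqref{volumemetric} delivers
$$
\llangle \Lie_u\mu, \Lie_v\mu\rrangle_\mu = \frac{4}{n}\int_M \diver u \cdot \diver v\,d\mu,
$$
which is the asserted formula. None of the three steps is genuinely difficult, as each mirrors Proposition \ref{fulldiffeoorbit}; the only point I would treat with care is the descent to the homogeneous space, namely checking that the isotropy subgroup of $\mu$ is precisely $\DiffmuM$ and that this matches the kernel $T_e\DiffmuM$ of the differential, so that $\pullbackmu$ induces an immersion of $\VolM$ rather than merely of $\DiffM$.
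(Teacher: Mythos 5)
Your proposal is correct and follows essentially the same route as the paper: compute the differential $\pullback_{\mu\ast\eta}(v\circ\eta)=\eta^\ast(\Lie_v\mu)$, then use the change-of-variables formula to show the induced metric is right-invariant and equals $\tfrac{4}{n}\int_M \diver u\cdot\diver v\,d\mu$ at the identity. The only difference is that you spell out the immersion claim (kernel of the differential equals $T_e\DiffmuM$, matching the isotropy subgroup), which the paper leaves implicit; this is a welcome but minor elaboration rather than a different argument.
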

\begin{proof} 
The differential of the pullback map is 
$$
\mathcal{P}_{\mu\ast\eta}(v\circ\eta) = \eta^\ast (\Lie_v \mu)
$$
for any $v \in T_e \DiffM$ and $\eta \in \DiffM$. 
Using formula \eqref{volumemetric} and changing variables as before we obtain 
\begin{align*}
\llangle u\circ\eta, v\circ\eta \rrangle_{[\eta]} 
&= 
\llangle \eta^\ast \Lie_u\mu, \eta^\ast \Lie_v\mu \rrangle_{\eta^\ast\mu}     \\ 
&= 
\frac{4}{n} \int_M (\diver{u}\circ\eta) \cdot (\diver{v}\circ\eta) \, d(\eta^\ast \mu)   \\ 
&=
\frac{4}{n} \int_M \diver{u} \cdot \diver{v} \, d\mu
= 
\llangle u, v \rrangle \,, 
\end{align*} 
so that the induced metric is again right-invariant. 
\end{proof} 

The three immersions described in Propositions \ref{fulldiffeoorbit}, \ref{volumorphismorbit}, and \ref{volumeorbit} 
can be summarized in the following diagram. 
\medskip
\begin{equation}\label{bigpicturediagram}
\begin{CD}
\IsogM @>{\imbedding}>> \DiffmuM @>{\projection}>> \DiffmuM/\IsogM
@>\pullbackg>>\MetmuM \\
@|     @VV{\imbedding}V    @VV{\imbedding}V    @VV{\imbedding}V\\
\IsogM @>{\imbedding}>> \DiffM @>{\projection}>> \DiffM/\IsogM
@>\pullbackg>>
\MetM \\
@VV{\imbedding}V    @|   @VV{\projection}V   @VV{\Volumizer}V \\
\DiffmuM @>{\imbedding}>> \DiffM @>{\projection}>> \DiffM/\DiffmuM
@>\pullbackmu>> \mathrm{Vol}(M)
\end{CD}
\end{equation}
\medskip

The first three terms of each row in \eqref{bigpicturediagram} form smooth fiber bundles 
in the obvious way. 
The third column is a smooth fiber bundle since $\IsogM \subset \DiffmuM$. 
The fourth column is a trivial fiber bundle which already appeared in \cite{fg}.

\begin{remark}  \upshape 
While curvatures of the spaces $\MetM$, $\MetmuM$ and $\mathrm{Vol}(M)$ 
have relatively simple expressions,
the induced metrics above on the corresponding homogeneous spaces 
$$
\Diff(M)/\IsogM, 
\quad 
\Diffmu(M)/\IsogM 
\quad \text{and}\quad 
\Diff(M)/\Diffmu(M) 
$$ 
turn out to have complicated geometries with one notable exception: 
the immersion of $\Diff(M)/\Diffmu(M)$ into $\mathrm{Vol}(M)$. 
This  immersion has codimension one 
and, as we discussed throughout this paper,
 $\pullbackmu$ maps the whole of $\DiffM$ 
into the space $\VolM\subset \mathrm{Vol}(M)$  of volume forms which integrate over $M$
to the same constant volume as $\mu$. 
On the other hand, the second fundamental form of the immersion of 
the quotient space $\Diff(M)/\IsogM$ into $\Met(M)$ 
is complicated (in this case codimension is infinite).
One can show that 
the sectional curvature of $\Diff(M)/\IsogM$ in the induced metric 
assumes both signs, 
see \cite{KLMP}. 
\end{remark}


\bibliographystyle{plain}


\begin{thebibliography}{99999o}
\small


\bibitem{AN}
S. Amari and H. Nagaoka, 
\emph{Methods of Information Geometry}, American Mathematical Society, Providence, RI 2000. 

\bibitem{ags} 
L. Ambrosio, N. Gigli and G. Savare, 
\textit{Gradient flows in metric spaces and in the space of probability measures}, 
Birkhauser, Basel 2005. 

\bibitem{A}
V. Arnold, 
Sur la g\'eometrie diff\'erentielle des groupes de Lie de dimension infinie et 
ses application \`a l'hydrodynamique des fluides parfaits, 
{\it Ann. Inst. Fourier (Grenoble)} {\bf 16} (1966), 319--361.

\bibitem{ak} 
V. Arnold and B. Khesin, 
\textit{Topological Methods in Hydrodynamics}, 
Springer, New York 1998. 


\bibitem{bb} 
J.-D. Benamou and Y. Brenier, 
A computational fluid mechanics solution to the Monge-Kantorovich 
mass transfer problem, 
\textit{Numer. Math.} \textbf{84} (2001), 375--393.


\bibitem{be} 
A. Besse, 
\textit{Einstein Manifolds}, 
Springer, New York 1987. 

\bibitem{Bolsinov}
A. Bolsinov, Integrable geodesic flow on homogeneous spaces,
Appendix C in {\it Modern methods in the theory of integrable systems} 
by A.V.~Borisov, I.S.~Mamaev,  Izhevsk, 2003, 236--254, 
and personal communication (2010).

\bibitem{bc} 
A. Bressan and A. Constantin, 
Global solutions of the Hunter-Saxton equation, 
\textit{SIAM J. Math. Anal.} \textbf{37} (2005), 996--1026. 


\bibitem{chentsov}
N.N. Chentsov, 
\emph{Statistical decision rules and optimal inference}, American Mathematical
Society, Providence, RI 1982.


\bibitem{CDK}
G. Cupini, B. Dacorogna, and O. Kneuss,
On the equation $\det\nabla u=f$ with no sign hypothesis,
{\it Calc. Var. Partial Differential Equations} {\bf 36} (2009), no. 2, 251--283.


\bibitem{ebin}
D. Ebin, 
The manifold of Riemannian metrics, 
in \textit{Proc. Sympos. Pure Math.} {\bf 15}, 
Amer. Math. Soc., Providence, R.I., 1970.

\bibitem{Ebin-Marsden}
D. Ebin and J.E. Marsden, 
Groups of diffeomorphisms and the motion of an incompressible fluid, 
{\it Ann. Math.} {\bf 92} (1970), 102--163.

\bibitem{efron}
B. Efron, Defining the curvature of a statistical problem
(with applications to second order efficiency), 
{\it Ann. Statist.} {\bf 3} (1975) no. 6, 1189--1242.

\bibitem{F2006}
A.S. Fokas, Integrable nonlinear evolution PDEs in $4+2$ and $3+1$ dimensions, {\it Phys. Rev. Lett.} {\bf 96} (2006), 190201. 



\bibitem{fg}
D.S. Freed and D. Groisser, 
The basic geometry of the manifold of Riemannian metrics and of its quotient by 
the diffeomorphism group, 
{\it Michigan Math. J.} {\bf 36} (1989) 323--344.


\bibitem{GE} 
M.L. Gromov and Y. Eliashberg,
Construction of a smooth mapping with a prescribed Jacobian. I,
{\it Functional Anal. Appl.} {\bf 7} (1973), 27--33.

\bibitem{HMA}
A.N. Hirani, J.E. Marsden, and J. Arvo,
Averaged template matching equations,
{\it Lect. Notes Comp. Sci.} {\bf 2134} (2001), 528--543.


\bibitem{HMR}
D. Holm, J.E. Marsden, and T.S. Ratiu, 
The Euler-Poincar\'e equations and semidirect products with applications 
to continuum theories, 
{\it Adv. Math.} {\bf 137} (1998), 1--81.


\bibitem{H-S}
J.K. Hunter and R. Saxton, 
Dynamics of director fields,
{\it SIAM J. Appl. Math.} {\bf 51} (1991), 1498--1521.






\bibitem{K-M}
B. Khesin and G. Misio\l ek, 
Euler equations on homogeneous spaces and Virasoro orbits, 
{\it Adv. Math.} {\bf 176} (2003), 116--144.
       

\bibitem{KLMP} 
B. Khesin, J. Lenells, G. Misio\l ek and S.C. Preston, 
Curvatures of Sobolev metrics on diffeomorphism groups, 
\textit{in preparation}. 

\bibitem{ky} 
A. Kirillov and D. Yurev, 
K\"{a}hler geometry of the infinite-dimensional homogeneous space 
$M = \Diff^+ (S^1)/\mathrm{Rot}(S^1)$, 
\textit{Funct. Anal. App.} \textbf{21} (1987). 
 
       


\bibitem{L1}
J. Lenells, 
The Hunter-Saxton equation describes the geodesic flow on a sphere, 
{\it J. Geom. Phys.} {\bf 57} (2007), 2049--2064.

\bibitem{L2}
J. Lenells, 
The Hunter-Saxton equation: a geometric approach,
{\it SIAM J. Math. Anal.} {\bf 40} (2008), 266--277.

\bibitem{L3}
J. Lenells,
Weak geodesic flow and global solutions of the Hunter-Saxton equation,
{\it Discrete Contin. Dyn. Syst.} {\bf 18} (2007), no. 4, 643--656.

\bibitem{lmt} 
J. Lenells, G. Misio\l ek and F. T\i\u{g}lay, 
Integrable evolution equations on spaces of tensor densities and their peakon solutions, 
\textit{Comm. Math. Phys.} \textbf{299} (2010), 129--161. 



\bibitem{Matveev-Topalov} 
V.S. Matveev and P. Topalov,
Geodesic equivalence of metrics as a particular case of the integrability of 
geodesic flows. {\it Theoret. and Math. Phys.} {\bf  123} (2000), no. 2, 651--658. 





\bibitem{MP}
G. Misio\l ek and S.C.~Preston, 
Fredholm properties of Riemannian exponential maps on diffeomorphism groups, 
{\it Invent. math.} {\bf 179} no. 1., 191--227.

\bibitem{moser}
J. Moser, 
On the volume elements on a manifold, 
{\it Trans. Amer.Math. Soc.} {\bf 120} (1965), 286--294. 

\bibitem{otto} 
F. Otto, 
The geometry of dissipative evolution equations: the porous medium equation, 
\textit{Comm. Partial Differ. Eq.} \textbf{26} (2001), 101--174. 





\bibitem{rao}
C.R. Rao, 
Information and the accuracy attainable in the estimation of statistical parameters,
reprinted in {\it Breakthroughs in statistics: foundations and basic theory}, 
S. Kotz and N. L. Johnson, eds., Springer, New York, 1993. 



\bibitem{mm} 
E. Sharon and D. Mumford, 
2D-Shape Analysis Using Conformal Mapping
\textit{Int. J. of Comp. Vision} \textbf{70}(1) (2006), 55--75.

\bibitem{shn} 
A. Shnirelman, 
Generalized fluid flows, their approximation and applications, 
\textit{Geom. funct. anal.} \textbf{4} (1994). 



\bibitem{st} 
K-T. Sturm, 
On the geometry of metric measure spaces. I and II, 
\textit{Acta Math.} \textbf{196} (2006), 65--131, 133--177. 

\bibitem{Tabachnikov} 
S. Tabachnikov,
Projectively equivalent metrics, exact transverse line fields and the geodesic 
flow on the ellipsoid, 
\textit{Comment. Math. Helv.} \textbf{74} (1999), no. 2, 306--321. 

\bibitem{tt} 
L. Takhtajan and L. Teo, 
Weil-Petersson metric on the universal Teichm\"{u}ller space, 
\textit{Mem. Amer. Math. Soc.} \textbf{183} (2006), no. 861, viii+119 pp. 

\bibitem{taylor}
M.E. Taylor,
\textit{Finite and Infinite Dimensional Lie Groups and Evolution Equations}, 
lecture notes from Chapel Hill, 2003. 


\bibitem{v} 
C. Villani, 
\textit{Optimal Transport: Old and New}, 
Springer, Berlin, 2009. 



\end{thebibliography}

\end{document}